\DeclareMathAlphabet{\mathcal}{OMS}{cmsy}{m}{n}
\begin{document}

\title*{Metastable Hierarchy in Abstract Low-Temperature Lattice Models}
\author{Seonwoo Kim}
\institute{Seonwoo Kim \at June E Huh Center for Mathematical Challenges, Korea Institute for Advanced Study, 85 Hoegi-ro, Dongdaemun-gu, Seoul 02455, Republic of Korea, \email{seonwookim@kias.re.kr}}
%
%
\maketitle


\abstract{In this article, we review the metastable hierarchy in low-temperature lattice models. In the first part, we state that for any abstract lattice system governed by a Hamiltonian potential and evolving according to a Metropolis-type dynamics, there exists a hierarchical decomposition of the collection of stable plateaux in the system into multiple $\mathfrak{m}$ levels, such that at each level there exist tunneling metastable transitions between the stable plateaux, which can be characterized by convergence to a simple Markov chain as the inverse temperature $\beta$ tends to infinity. In the second part, we collect several examples that realize this hierarchical structure of metastability. In order to fix the ideas, we select the Ising model as our lattice system and discuss its metastable behavior under four different types of dynamics, namely the Glauber dynamics with positive/zero external fields and the Kawasaki dynamics with few/many particles. This review article is submitted to the proceedings of the event \emph{PSPDE XII}, held at the University of Trieste from September 9--13, 2024.}

\keywords{Lattice models, metastability, hierarchical structure, Ising model, Glauber dynamics, Kawasaki dynamics}

\section{\label{sec1}Introduction}

Metastability occurs in a wide class of low-temperature dynamical
systems that possess two or more locally stable states, including
countless real-world examples such as the supercooling of water or
the persistence of stock prices at high levels. In the context of
statistical physics, this phenomenon can be understood as a first-order
phase transition with respect to intrinsic physical parameters such
as magnetization or particle density. It occurs in a large number
of concrete models falling in the range of small random perturbations
of dynamical systems \cite{BEGK04,FW,LanLeeSeo Meta-1,LanLeeSeo Meta-2,LanMarianiSeo},
condensing interacting particle systems \cite{BL ZRP,BDG,Kim21a,Kim23,LMS critical,Seo NRZRP},
ferromagnetic spin systems at low temperature \cite{BAC,BdHN,KS22,KS24,MNOS04,NZ19,NS91,NS92,OS95},
etc.

During the past several decades, there were several paradigmatic discoveries
to analyze and quantify the phenomenon of metastability with mathematically rigorous methods.
Below we gather a brief, non-exhaustive list of the known approaches.

\begin{itemize}
\item {\it Pathwise approach}: Freidlin and Wentzell \cite{FW} developed a theory of
typical trajectories of a given dynamical system by means of the well-known large deviation theory.
Inspired by this discovery, Cassandro, Galves, Olivieri and Vares \cite{CGOV} adopted this theory to discrete
ferromagnetic systems with applications to the Curie--Weiss model
and the contact process on $\mathbb{Z}$. See \cite{OV} for an extensive history, overview and literature on the subject.
\item {\it Potential theory approach}: Bovier, Eckhoff, Gayrard and Klein \cite{BEGK04,BGK05}
made a remarkable breakthrough by interpreting the well-known \emph{potential
theory} in the language of metastability. We refer the readers to a recent monograph \cite{BdH} for detailed
aspects and complete literature on the potential-theoretic approach to metastability.
\item {\it Martingale approach}: In a series of papers \cite{BL10,BL12b,BL15a}, Beltrán and Landim developed the martingale
approach to metastability, that is, they characterized the successive metastable jumps among the valleys
as an asymptotically Markovian simple jumps between the (meta)stable states, by means of the martingale
relation of Markov processes. See \cite{Lan19} for a detailed explanation.
\item {\it Resolvent approach}: Landim, Marcondes and Seo \cite{LMS resolvent} introduced
a completely new methodology, namely the \emph{resolvent approach},
to study metastability occurring in systems with a complicated structure
of multiple stable states. The key idea is that the metastable behavior of a given dynamics is
\emph{equivalent} to the flatness of solutions to particular resolvent equations.
\end{itemize}

In this article, we consider a general class
of low-temperature ferromagnetic lattice models and identify their hierarchical
structure of tunneling transitions that occur in the complicated energy
landscape of metastable states. To fix ideas, we consider the Metropolis-type
dynamics (cf. \eqref{eq:rbeta def}) in the state space such that
the dynamics jumps with rate $1$ to equal or lower energy configurations
and with $\beta$-exponentially small rate to higher energy configurations,
where $\beta$ denotes the inverse temperature. We characterize each \emph{stable plateau} (see Definition
\ref{def:stab plat}) as a metastable element in the system and quantify
the metastable transitions at each level with a limiting Markov chain
between these stable plateaux, by accelerating the dynamics and proving
convergence to a limit object. The exact formulation of this convergence
is given in Section \ref{sec2.2}.

More specifically, denote temporarily by $\mathscr{P}^{1}$ the collection
of stable plateaux in the system. For each stable plateau, its initial
depth is defined as the minimum energy barrier to make a transition
to another one. Then, the initial metastable tunneling transitions
occur between those with the minimum initial depth. In turn, stable
plateaux in each irreducible class subject to the first-level transitions
form a new element in the second level, so that next we can characterize
the second-level tunneling transitions in the new collection $\mathscr{P}^{2}$
in a similar way. We can iterate this procedure until we obtain a
unique irreducible class of transitions at the terminal level, say
level $\mathfrak{m}$. Then, the sequence $\mathscr{P}^{1},\mathscr{P}^{2},\dots,\mathscr{P}^{\mathfrak{m}}$
contains complete information about the metastable hierarchy in the
system. See Section \ref{sec2.1} for the exact formulation of this
scheme.

We chose the Metropolis-type dynamics (cf. \eqref{eq:rbeta def}) as our Markov chain in the abstract
system to avoid further technical difficulties irrelevant to the main
results of the article. Nevertheless, we believe that the main ideas
presented here are robust enough to be directly applicable to a broader
class of stochastic dynamics, such as non-reversible dynamics in the
category of Freidlin--Wentzell-type Markov chains (cf. \cite[Section 6]{OV}
or \cite{CNSoh}).

To demonstrate this method, we fix a concrete system, namely the Ising model, given
four types of dynamical systems defined thereon, which are:
\begin{enumerate}
\item Glauber dynamics on the Ising model with positive external field;
\item Glauber dynamics on the Ising model with zero external field;
\item Kawasaki dynamics on the Ising model with few particles;
\item Kawasaki dynamics on the Ising model with many particles.
\end{enumerate}

The results are based on the works from \cite{NS91,BL11} (first), \cite{NZ19,KS25} (second), \cite{BL15b} (third), and \cite{Kim24} (fourth). In this review article, we omit most of the proofs of the main results; interested readers are referred to the full version \cite{Kim24} for the hierarchical decomposition and the above-mentioned references for the concrete examples.

In Section \ref{sec2}, we study the general framework for establishing the hierarchical
structure of metastable transitions in low-temperature lattice models.
In Section \ref{sec3}, we apply this framework to the four types of models listed above.
In Section \ref{sec4}, we discuss some possible extensions.

\section{\label{sec2}Metastable Hierarchy in Low-Temperature Lattice Models}

Consider a finite state space $\Omega$ which is given an (undirected) edge
structure, such that we write $\eta\sim\xi$ (and $\xi\sim\eta$)
if and only if $\{\eta,\xi\}$ is an edge. We assume that the resulting
graph $\Omega$ is connected.

Suppose that a \emph{Hamiltonian} (or \emph{energy}) function $\mathbb{H}:\Omega\to\mathbb{R}$
is given to the system, such that the corresponding \emph{Gibbs measure}
$\mu_{\beta}$ on $\Omega$ is defined as
\begin{equation}
\mu_{\beta}(\eta):=Z_{\beta}^{-1}e^{-\beta\mathbb{H}(\eta)}\quad\text{for}\quad\eta\in\Omega,\label{eq:Gibbs def}
\end{equation}
where $\beta>0$ is the inverse temperature of the system and $Z_{\beta}:=\sum_{\eta\in\Omega}e^{-\beta\mathbb{H}(\eta)}$
is the normalizing constant such that $\mu_{\beta}$ is a probability
measure. We are interested in the regime of zero-temperature limit:
$\beta\to\infty$.
\begin{definition}
\label{nota:bot bdry def}For $\mathcal{A}\subseteq\Omega$, define
the \emph{bottom} of $\mathcal{A}$ as $\mathcal{F}(\mathcal{A}):= {\rm argmin}_\mathcal{A} \mathbb{H}$
and the (outer) \emph{boundary} of $\mathcal{A}$ as $\partial\mathcal{A}:=\{\eta\in\Omega\setminus\mathcal{A}:\eta\sim\xi, \ \exists\xi\in\mathcal{A}\}$.
Moreover, define $\partial^{\star}\mathcal{A}:=\mathcal{F}(\partial\mathcal{A})$.
\end{definition}

Consider a \emph{Metropolis-type} dynamics $\{\eta_{\beta}(t)\}_{t\ge0}$
in $\Omega$ whose transition rate function $r_{\beta}:\Omega\times\Omega\to[0,\infty)$
is represented as
\begin{equation}
r_{\beta}(\eta,\xi)=\begin{cases}
e^{-\beta\max\{\mathbb{H}(\xi)-\mathbb{H}(\eta),0\}} & \text{if}\quad\eta\sim\xi,\\
0 & \text{otherwise}.
\end{cases}\label{eq:rbeta def}
\end{equation}
Denote by $L_{\beta}$ the corresponding infinitesimal generator.
Since $\Omega$ is connected, $\{\eta_{\beta}(t)\}_{t\ge0}$ is irreducible
in $\Omega$. One can easily notice that the dynamics is
reversible with respect to $\mu_{\beta}$:
\begin{equation}
\mu_{\beta}(\eta)r_{\beta}(\eta,\xi)=\mu_{\beta}(\xi)r_{\beta}(\xi,\eta)=\begin{cases}
Z_{\beta}^{-1}e^{-\beta\max\{\mathbb{H}(\eta),\mathbb{H}(\xi)\}} & \text{if}\quad\eta\sim\xi,\\
0 & \text{otherwise}.
\end{cases}\label{eq:det bal}
\end{equation}
We denote by $\mathcal{S}:=\mathcal{F}(\Omega)$ the minimizer of
the Hamiltonian (cf. Definition \ref{nota:bot bdry def}). The elements
of $\mathcal{S}$ are called \emph{ground states}. By \eqref{eq:Gibbs def},
it is straightforward that $\lim_{\beta\to\infty}\mu_{\beta}(\mathcal{S})=1$.

A sequence $\omega=(\omega_{n})_{n=0}^{N}$ of configurations is called
a \emph{path} from $\omega_{0}=\eta$ to $\omega_{N}=\xi$ if $\omega_{n}\sim\omega_{n+1}$
for all $n\in[0,N-1]$.\footnote{In this article, $[\alpha,\alpha']$ denotes the collection of integers
from $\alpha$ to $\alpha'$.} In this case, we write $\omega:\eta\to\xi$. Moreover, writing
$\omega:\mathcal{A}\to\mathcal{B}$ for disjoint subsets $\mathcal{A},\mathcal{B}$
implies that $\omega:\eta\to\xi$ for some $\eta\in\mathcal{A}$
and $\xi\in\mathcal{B}$. For each path $\omega$, define the \emph{height}
of $\omega$ as
\begin{equation}
\Phi_{\omega}:=\max_{n\in[0,N]}\mathbb{H}(\omega_{n}).\label{eq:Phi-omega def}
\end{equation}
Then, for $\eta,\xi\in\Omega$, define the \emph{communication height}
or \emph{energy barrier} between $\eta$ and $\xi$ as
\begin{equation}
\Phi(\eta,\xi):=\min_{\omega:\eta\to\xi}\Phi_{\omega}.\label{eq:comm height def}
\end{equation}
For two disjoint subsets $\mathcal{A},\mathcal{B}\subseteq\Omega$,
define $\Phi(\mathcal{A},\mathcal{B}):=\min_{\omega:\mathcal{A}\to\mathcal{B}}\Phi_{\omega}$.
By concatenating the paths, it holds that
\begin{equation}
\Phi(\eta,\xi)\le\max\{\Phi(\eta,\zeta),\Phi(\zeta,\xi)\}\quad\text{for all}\quad\eta,\xi,\zeta\in\Omega.\label{eq:Phi max ineq}
\end{equation}
Now, define
\begin{equation}
\overline{\Phi}:=\max_{\bm{s},\bm{s'}\in\mathcal{S}}\Phi(\bm{s},\bm{s'}).\label{eq:Phi-bar def}
\end{equation}
In other words, $\overline{\Phi}$ is the minimal energy level subject
to which we observe all transitions between the ground states in $\mathcal{S}$.
Thus, we naturally define
\begin{equation}
\overline{\Omega}:=\{\eta\in\Omega:\Phi(\mathcal{S},\eta)\le\overline{\Phi}\},\label{eq:Omega-bar def}
\end{equation}
which is the collection of all configurations reachable from the ground
states by paths of height at most $\overline{\Phi}$. Hereafter, our
analysis is focused on the collection $\overline{\Omega}$, which
is a connected subgraph of $\Omega$.
\begin{remark}
\label{rem:Omega Omega-bar}In the zero-temperature limit, the energy
landscape near the ground states captures all the essential features
of the metastability phenomenon. Thus, instead of trying to characterize
all the locally stable and saddle structures in the whole system,
we focus on the essential subset $\overline{\Omega}$. Nevertheless,
the following analyses below are fully valid even
if $\overline{\Omega}$ is replaced by the full space $\Omega$; which is indeed
the first case handled in Section \ref{sec3}, the Glauber dynamics on the Ising model with positive external field.
\end{remark}

Now, we define two crucial notions explored in this article; namely \emph{stable plateaux} and \emph{cycles}. We call that a set $\mathcal{A}$ is \emph{connected} if for any $\eta,\xi\in\mathcal{A}$, there exists a path $\eta=\eta_0,\eta_1,\dots,\eta_N=\xi$ inside $\mathcal{A}$, i.e., $\eta_n\in\mathcal{A}$ for all $n\in[0,N]$.
\begin{definition}[Stable plateau]
\label{def:stab plat}A nonempty connected set $\mathcal{P}$ is
called a \emph{stable plateau} if the following two statements hold.
\begin{itemize}
\item For all $\eta,\xi\in\mathcal{P}$, $\mathbb{H}(\eta)=\mathbb{H}(\xi)$;
we denote by $\mathbb{H}(\mathcal{P})$ the common energy value.\footnote{We adopt this notation for any set with common energy.}
\item It holds that $\mathbb{H}(\zeta)>\mathbb{H}(\mathcal{P})$ for all
$\zeta\in\partial\mathcal{P}$.
\end{itemize}
\end{definition}

We denote by $\nu_{0}$ the number of stable plateaux in $\overline{\Omega}$
and by $\mathscr{P}^{1}$ their collection:
\begin{equation}
\mathscr{P}^{1}:=\{\mathcal{P}_{i}^{1}:i\in[1,\nu_{0}]\}.\label{eq:P1 def}
\end{equation}
It is clear that each ground state in $\mathcal{S}$ constitutes a
stable plateau in $\overline{\Omega}$, i.e., for every $\bm{s}\in\mathcal{S}$
there exists $\mathcal{P}\in\mathscr{P}^{1}$ such that $\bm{s}\in\mathcal{P}$. Note that here it is possible for a stable plateau to contain two or more ground states.
\begin{definition}[Cycle]
\label{def:cycle}A nonempty connected set $\mathcal{C}$ is a (nontrivial)
\emph{cycle} (cf. \cite[Definition 6.5]{OV}) if
\begin{equation}
\max_{\eta\in\mathcal{C}}\mathbb{H}(\eta)<\min_{\xi\in\partial\mathcal{C}}\mathbb{H}(\xi).\label{eq:cyc def}
\end{equation}
Each stable plateau is clearly a cycle. Then, the \emph{depth} of
cycle $\mathcal{C}$ is defined as
\begin{equation}
\Gamma^{\mathcal{C}}:=\min_{\partial\mathcal{C}}\mathbb{H}-\min_{\mathcal{C}}\mathbb{H}>0.\label{eq:depth-def}
\end{equation}
\end{definition}

Each cycle, in particular each stable plateau, is \emph{metastable}
in the sense that it takes an exponentially long time to exit (cf.
\cite[Theorem 6.23]{OV}) and that the exit time is asymptotically
exponentially distributed (cf. \cite[Theorem 6.30]{OV}). On the other
hand, any configuration that is not contained in a stable plateau
is not metastable, since its holding rate is of order $1$. Thus,
from now on we assume that $\nu_{0}\ge2$, consider $\mathscr{P}^{1}$
as the collection of all metastable elements and study the tunneling
transitions between the elements of $\mathscr{P}^{1}$.

It is easy to verify that for any two cycles $\mathcal{C},\mathcal{C}'$
with $\mathcal{C}\cap\mathcal{C}'\ne\emptyset$, it holds that $\mathcal{C}\subseteq\mathcal{C}'$
or $\mathcal{C}'\subseteq\mathcal{C}$ (see \cite[Proposition 6.8]{OV}
for a proof). Moreover, the following lemma holds.
\begin{lemma}
\label{lem:cyc bot P}For every cycle $\mathcal{C}$, its bottom $\mathcal{F}(\mathcal{C})$
is a union of stable plateaux.
\end{lemma}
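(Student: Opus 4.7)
The plan is to decompose $\mathcal{F}(\mathcal{C})$ into its connected components (with respect to the induced subgraph structure on $\mathcal{F}(\mathcal{C})$) and verify that each such component satisfies the two conditions in Definition~\ref{def:stab plat}. Write $\mathcal{F}(\mathcal{C})=\mathcal{P}_{1}\cup\cdots\cup\mathcal{P}_{k}$ for these components; by construction they are nonempty, pairwise disjoint, and connected, so it suffices to prove each $\mathcal{P}_{j}$ is a stable plateau. The first bullet of Definition~\ref{def:stab plat} is automatic, because every point of $\mathcal{F}(\mathcal{C})$ attains the same minimum energy value $h:=\min_{\mathcal{C}}\mathbb{H}$, so in particular $\mathbb{H}\equiv h$ on $\mathcal{P}_{j}$.

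The content of the lemma is therefore the second bullet: for every $\zeta\in\partial\mathcal{P}_{j}$, one needs $\mathbb{H}(\zeta)>h$. I would fix such a $\zeta$ and split into two cases according to whether $\zeta$ belongs to the cycle $\mathcal{C}$ or not. If $\zeta\notin\mathcal{C}$, then since $\zeta$ is adjacent to some $\xi\in\mathcal{P}_{j}\subseteq\mathcal{C}$, we have $\zeta\in\partial\mathcal{C}$, and the cycle condition \eqref{eq:cyc def} gives
\begin{equation*}
\mathbb{H}(\zeta)\ \ge\ \min_{\partial\mathcal{C}}\mathbb{H}\ >\ \max_{\mathcal{C}}\mathbb{H}\ \ge\ h.
\end{equation*}
If instead $\zeta\in\mathcal{C}$, then $\zeta$ cannot lie in $\mathcal{F}(\mathcal{C})$: otherwise $\zeta$ would be adjacent to the element $\xi\in\mathcal{P}_{j}$ of $\mathcal{F}(\mathcal{C})$, forcing $\zeta$ to belong to the same connected component $\mathcal{P}_{j}$ of $\mathcal{F}(\mathcal{C})$, which contradicts $\zeta\in\partial\mathcal{P}_{j}$. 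Hence $\zeta\in\mathcal{C}\setminus\mathcal{F}(\mathcal{C})$, and by definition of $\mathcal{F}(\mathcal{C})$ we again get $\mathbb{H}(\zeta)>h$.

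There is no serious obstacle here; the only subtlety is to use the correct notion of connected component (induced subgraph on $\mathcal{F}(\mathcal{C})$) so that the boundary of each $\mathcal{P}_{j}$ within $\mathcal{C}$ lies strictly above the minimum, while the boundary outside $\mathcal{C}$ is controlled by the defining inequality of the cycle. Combining the two cases yields $\mathbb{H}(\zeta)>\mathbb{H}(\mathcal{P}_{j})$ for all $\zeta\in\partial\mathcal{P}_{j}$, so each $\mathcal{P}_{j}$ is a stable plateau and $\mathcal{F}(\mathcal{C})=\bigcup_{j}\mathcal{P}_{j}$ is a union of stable plateaux, as claimed.
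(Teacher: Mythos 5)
Your proof is correct and follows essentially the same strategy as the paper's: decompose $\mathcal{F}(\mathcal{C})$ into connected components and verify the boundary condition by splitting according to whether the boundary configuration lies outside $\mathcal{C}$ (use the cycle inequality) or inside $\mathcal{C}$ (use that it is not in the bottom). The only difference is that you spell out explicitly why a boundary point of $\mathcal{P}_j$ that lies in $\mathcal{C}$ cannot belong to $\mathcal{F}(\mathcal{C})$, a step the paper leaves implicit.
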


\begin{proof}
Decompose $\mathcal{F}(\mathcal{C})$ into connected components as
$\mathcal{F}(\mathcal{C})=\mathcal{A}_{1}\cup\cdots\cup\mathcal{A}_{N}$
and consider $\partial\mathcal{A}_{n}$ for each $n\in[1,N]$. We
claim that $\mathbb{H}(\xi)>\mathbb{H}(\mathcal{A}_{n})$ for all
$\xi\in\partial\mathcal{A}_{n}$ which concludes the proof of the
lemma. Indeed, if $\xi\notin\mathcal{C}$ such that $\xi\in\partial\mathcal{C}$,
then $\mathbb{H}(\xi)>\mathbb{H}(\mathcal{A}_{n})$ by \eqref{eq:cyc def}.
If $\xi\in\mathcal{C}$, then $\mathbb{H}(\xi)>\mathbb{H}(\mathcal{A}_{n})$
since $\mathcal{A}_{n}\subseteq\mathcal{F}(\mathcal{C})$ and $\xi\in\mathcal{C}\setminus\mathcal{F}(\mathcal{C})$.
\end{proof}
Now, we present a general framework of constructing Markov jumps between
cycles.
\begin{definition}[Construction of Markovian jumps between cycles]
\label{def:gen const}Suppose that we are given a pair $(\mathscr{C},\Gamma^{\star})$
where $\mathscr{C}$ is a collection of disjoint cycles in $\overline{\Omega}$
with $|\mathscr{C}|\ge2$ and $\Gamma^{\star}$ is a positive real
number. Define
\begin{equation}
\mathscr{P}^{\mathscr{C}}:=\{\mathcal{F}(\mathcal{C}):\mathcal{C}\in\mathscr{C}\}.\label{eq:PC def}
\end{equation}
By Lemma \ref{lem:cyc bot P}, $\mathscr{P}^{\mathscr{C}}$ is a collection
of disjoint unions of stable plateaux in $\overline{\Omega}$. Define
\begin{equation}
\mathscr{C}^{\star}:=\{\mathcal{C}\in\mathscr{C}:\Gamma^{\mathcal{C}}\ge\Gamma^{\star}\}\quad\text{and}
\quad\mathscr{C}^{\sharp}:=\{\mathcal{C}\in\mathscr{C}:\Gamma^{\mathcal{C}}<\Gamma^{\star}\},\label{eq:C-star C-sharp def}
\end{equation}
such that $\mathscr{C}=\mathscr{C}^{\star}\cup\mathscr{C}^{\sharp}$.
In words, $\mathscr{C}^{\star}$ consists of the cycles in $\mathscr{C}$
with depth at least $\Gamma^{\star}$. Accordingly, define
\begin{equation}
\mathscr{P}^{\mathscr{C}^{\star}}:=\{\mathcal{F}(\mathcal{C}):\mathcal{C}\in\mathscr{C}^{\star}\}\quad\text{and}\quad\mathscr{P}^{\mathscr{C}^{\sharp}}:=\{\mathcal{F}(\mathcal{C}):\mathcal{C}\in\mathscr{C}^{\sharp}\},\label{eq:P-star P-sharp def}
\end{equation}
such that $\mathscr{P}^{\mathscr{C}}=\mathscr{P}^{\mathscr{C}^{\star}}\cup\mathscr{P}^{\mathscr{C}^{\sharp}}$.
In addition, define $\Delta^{\mathscr{C}}:=\overline{\Omega}\setminus\bigcup_{\mathcal{C}\in\mathscr{C}}\mathcal{C}$.

The \emph{contracted graph} $\Omega^{\mathscr{C}}$ is obtained from
$\overline{\Omega}$ by contracting the elements in each cycle $\mathcal{C}\in\mathscr{C}$
into single element $\mathcal{F}(\mathcal{C})\in\mathscr{P}^{\mathscr{C}}$,
such that $\Omega^{\mathscr{C}}=\Delta^{\mathscr{C}}\cup\mathscr{P}^{\mathscr{C}}$.
Then, the induced Markov chain $\{\mathfrak{X}^{\mathscr{C}}(t)\}_{t\ge0}$
in $\Omega^{\mathscr{C}}$ with transition rate $\mathfrak{R}^{\mathscr{C}}(\cdot,\cdot)$
is defined as follows:
\begin{equation}
\begin{cases}
\mathfrak{R}^{\mathscr{C}}(\eta,\xi):=1 & \text{if}\quad\eta\sim\xi,\quad\mathbb{H}(\xi)\le\mathbb{H}(\eta),\\
\mathfrak{R}^{\mathscr{C}}(\eta,\mathcal{F}(\mathcal{C})):=|\{\zeta\in\mathcal{C}:\eta\sim\zeta\}| & \text{if}\quad\eta\in\partial\mathcal{C},\\
\mathfrak{R}^{\mathscr{C}}(\mathcal{F}(\mathcal{C}),\eta):=|\mathcal{F}(\mathcal{C})|^{-1} |\{\zeta\in\mathcal{C}:\eta\sim\zeta\}| & \text{if}\quad\Gamma^{\mathcal{C}}\le\Gamma^{\star},\quad\eta\in\partial^{\star}\mathcal{C},
\end{cases}\label{eq:RC def}
\end{equation}
and $\mathfrak{R}^{\mathscr{C}}(\cdot,\cdot):=0$ otherwise. Note
that $\{\mathfrak{X}^{\mathscr{C}}(t)\}_{t\ge0}$ is not necessarily
irreducible, since every $\mathcal{F}(\mathcal{C})\in\mathscr{P}^{\mathscr{C}}$
with $\Gamma^{\mathcal{C}}>\Gamma^{\star}$ is an absorbing state.
Thus, $\{\mathfrak{X}^{\mathscr{C}}(t)\}_{t\ge0}$ encodes the asymptotic
jumps between the configurations and cycles with depth at most $\Gamma^{\star}$.
By \eqref{eq:C-star C-sharp def} and \eqref{eq:RC def}, among the
cycles in $\mathscr{C}^{\star}$ only those with depth exactly $\Gamma^{\star}$
have positive rates with respect to $\mathfrak{R}^{\mathscr{C}}(\cdot,\cdot)$.

Then, consider the \emph{trace} Markov chain $\{\mathfrak{X}^{\mathscr{C}^{\star}}(t)\}_{t\ge0}$
in $\mathscr{P}^{\mathscr{C}^{\star}}$ whose transition rate function
$\mathfrak{R}^{\mathscr{C}^{\star}}(\cdot,\cdot)$ is defined as\footnote{In this article, $\mathcal{T}_{\mathcal{A}}$ denotes the (random)
hitting time of set $\mathcal{A}$.}
\begin{equation}
\mathfrak{R}^{\mathscr{C}^{\star}}(\mathcal{F}(\mathcal{C}),\mathcal{F}(\mathcal{C}')):=\sum_{\eta\in\Delta^{\mathscr{C}}}\mathfrak{R}^{\mathscr{C}}(\mathcal{F}(\mathcal{C}),\eta) {\bf P}_{\eta}^{\mathscr{C}}[\mathcal{T}_{\mathcal{F}(\mathcal{C}')}=\mathcal{T}_{\mathscr{P}^{\mathscr{C}^{\star}}}],\label{eq:R-Cstar def}
\end{equation}
where ${\bf P}_{\eta}^{\mathscr{C}}$ denotes the law of $\{\mathfrak{X}^{\mathscr{C}}(t)\}_{t\ge0}$
starting from $\eta\in\Omega^{\mathscr{C}}$. Refer to \cite[Section 6.1]{BL10}
for the precise definition of the trace process.
\end{definition}

\subsection{\label{sec2.1}Hierarchical structure of stable plateaux}

\subsubsection*{Initial level}

Recall from \eqref{eq:P1 def} that $\mathscr{P}^{1}=\{\mathcal{P}_{1}^{1},\dots,\mathcal{P}_{\nu_{0}}^{1}\}$
is the collection of all $\nu_{0}\ge2$ stable plateaux in $\overline{\Omega}$.
For each $\mathcal{P}_{i}^{1}\in\mathscr{P}^{1}$, define the \emph{initial
depth} as
\begin{equation}
\Gamma_{i}^{1}:=\Phi(\mathcal{P}_{i}^{1},\breve{\mathcal{P}}_{i}^{1})-\mathbb{H}(\mathcal{P}_{i}^{1})>0,\quad\text{where}\quad\breve{\mathcal{P}}_{i}^{1}:=\bigcup_{j\in[1,\nu_{0}]:j\ne i}\mathcal{P}_{j}^{1}.\label{eq:Gammai1 def}
\end{equation}
Accordingly, define
\begin{equation}
\Gamma^{\star,1}:=\min_{i\in[1,\nu_{0}]}\Gamma_{i}^{1}>0.\label{eq:Gamma-star1 def}
\end{equation}
Then, define
\begin{equation}
\mathcal{V}_{i}^{1}:=\{\eta\in\Omega:\Phi(\mathcal{P}_{i}^{1},\eta)-\mathbb{H}(\mathcal{P}_{i}^{1})<\Gamma_{i}^{1}\}.\label{eq:Vi1 def}
\end{equation}

\begin{lemma}
\label{lem:Vi1 cycle}Collections $\mathcal{V}_{i}^{1}$ for $i\in[1,\nu_{0}]$
are disjoint cycles in $\overline{\Omega}$ with bottom $\mathcal{P}_{i}^{1}$
and depth $\Gamma_{i}^{1}$.
\end{lemma}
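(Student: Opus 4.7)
My plan is to derive the four assertions---cycle condition, disjointness, bottom $=\mathcal{P}_i^1$, and depth $=\Gamma_i^1$---directly from the definitions \eqref{eq:Vi1 def}, \eqref{eq:Gammai1 def}, using essentially only the sub-max inequality \eqref{eq:Phi max ineq} together with Lemma \ref{lem:cyc bot P}. The arguments are all of a piece: a single pass through \eqref{eq:Phi max ineq} applied in the right direction.

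First I verify connectedness and the cycle condition \eqref{eq:cyc def}. The inclusion $\mathcal{P}_i^1\subseteq\mathcal{V}_i^1$ is immediate from $\Phi(\mathcal{P}_i^1,\eta)=\mathbb{H}(\mathcal{P}_i^1)$ for $\eta\in\mathcal{P}_i^1$. For any $\eta\in\mathcal{V}_i^1$, I choose a path $\omega:\mathcal{P}_i^1\to\eta$ realizing $\Phi_\omega<\mathbb{H}(\mathcal{P}_i^1)+\Gamma_i^1$; every vertex of $\omega$ lies in $\mathcal{V}_i^1$, yielding both connectedness and $\max_{\mathcal{V}_i^1}\mathbb{H}<\mathbb{H}(\mathcal{P}_i^1)+\Gamma_i^1$. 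For $\zeta\in\partial\mathcal{V}_i^1$, pick a neighbor $\eta\in\mathcal{V}_i^1$ and apply \eqref{eq:Phi max ineq} to $\mathcal{P}_i^1\to\eta\to\zeta$: since $\zeta\notin\mathcal{V}_i^1$ forces $\Phi(\mathcal{P}_i^1,\zeta)\ge\mathbb{H}(\mathcal{P}_i^1)+\Gamma_i^1>\Phi(\mathcal{P}_i^1,\eta)$, we must have $\mathbb{H}(\zeta)\ge\mathbb{H}(\mathcal{P}_i^1)+\Gamma_i^1$. This yields \eqref{eq:cyc def} together with the lower estimate $\min_{\partial\mathcal{V}_i^1}\mathbb{H}\ge\mathbb{H}(\mathcal{P}_i^1)+\Gamma_i^1$.

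Next I compute the bottom and finish the depth. By Lemma \ref{lem:cyc bot P}, $\mathcal{F}(\mathcal{V}_i^1)$ is a union of stable plateaux, one of which is $\mathcal{P}_i^1$. If it contained any other $\mathcal{P}_j^1$, I would get $\Phi(\mathcal{P}_i^1,\mathcal{P}_j^1)<\mathbb{H}(\mathcal{P}_i^1)+\Gamma_i^1=\Phi(\mathcal{P}_i^1,\breve{\mathcal{P}}_i^1)$, contradicting $\mathcal{P}_j^1\subseteq\breve{\mathcal{P}}_i^1$. Hence $\mathcal{F}(\mathcal{V}_i^1)=\mathcal{P}_i^1$. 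The matching upper bound on the depth comes from an optimal path $\omega^\star:\mathcal{P}_i^1\to\breve{\mathcal{P}}_i^1$ of height $\mathbb{H}(\mathcal{P}_i^1)+\Gamma_i^1$: its first vertex $\zeta^\star$ lying outside $\mathcal{V}_i^1$ belongs to $\partial\mathcal{V}_i^1$ and satisfies $\mathbb{H}(\zeta^\star)\le\Phi_{\omega^\star}$, closing the depth computation.

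Finally, disjointness follows from a single application of \eqref{eq:Phi max ineq}: for $\eta\in\mathcal{V}_i^1\cap\mathcal{V}_j^1$ with $i\ne j$,
\begin{equation*}
\Phi(\mathcal{P}_i^1,\mathcal{P}_j^1)\le\max\{\Phi(\mathcal{P}_i^1,\eta),\Phi(\mathcal{P}_j^1,\eta)\}<\max\{\mathbb{H}(\mathcal{P}_i^1)+\Gamma_i^1,\mathbb{H}(\mathcal{P}_j^1)+\Gamma_j^1\},
\end{equation*}
and if the right-hand maximum is attained at index $k\in\{i,j\}$, it equals $\Phi(\mathcal{P}_k^1,\breve{\mathcal{P}}_k^1)\le\Phi(\mathcal{P}_i^1,\mathcal{P}_j^1)$, a contradiction. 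The one fiddly point worth a line is confirming the ambient inclusion $\mathcal{V}_i^1\subseteq\overline{\Omega}$, which reduces via \eqref{eq:Phi max ineq} to the bound $\mathbb{H}(\mathcal{P}_i^1)+\Gamma_i^1\le\overline{\Phi}$; this in turn uses $\mathcal{P}_i^1\subseteq\overline{\Omega}$ and the fact that any ground state outside $\mathcal{P}_i^1$ sits in some other stable plateau, hence lies in $\breve{\mathcal{P}}_i^1$.
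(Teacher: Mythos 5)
Your proof is correct and follows essentially the same route as the paper's, verifying in turn the cycle property, the bottom, the depth, and disjointness via the ultrametric inequality \eqref{eq:Phi max ineq}; the only differences are cosmetic reorderings (the paper establishes disjointness first and then uses $\mathcal{V}_i^1\cap\mathcal{P}_j^1=\emptyset$ to pin down the bottom, whereas you deduce $\mathcal{F}(\mathcal{V}_i^1)=\mathcal{P}_i^1$ directly from $\Phi(\mathcal{P}_i^1,\breve{\mathcal{P}}_i^1)=\mathbb{H}(\mathcal{P}_i^1)+\Gamma_i^1$, two arguments which are plainly equivalent). One point worth a line in your final remark: the existence of a ground state in $\breve{\mathcal{P}}_i^1$, which your ambient-inclusion argument tacitly assumes, is only guaranteed by the standing hypothesis $\nu_0\ge2$ (if $\mathcal{S}\subseteq\mathcal{P}_i^1$ then $\overline{\Omega}=\mathcal{S}$ and $\nu_0=1$).
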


\begin{proof}
By \eqref{eq:Omega-bar def} and \eqref{eq:Gammai1 def} it is clear
that $\mathbb{H}(\mathcal{P}_{i}^{1})+\Gamma_{i}^{1}\le\overline{\Phi}$,
thus $\mathcal{V}_{i}^{1}$ is a subset of $\overline{\Omega}$.\footnote{The same reasoning applies to all other collections in the remainder
as well; thus, we regard them to be subsets of $\overline{\Omega}$
without further explanation.} To prove the disjointness, suppose the contrary that there exists
$\eta\in\mathcal{V}_{i}^{1}\cap\mathcal{V}_{j}^{1}$ for $i\ne j$.
By \eqref{eq:Phi max ineq},
\[
\Phi(\mathcal{P}_{i}^{1},\mathcal{P}_{j}^{1})\le\max\{\Phi(\mathcal{P}_{i}^{1},\eta),\Phi(\mathcal{P}_{j}^{1},\eta)\}<\max\{\mathbb{H}(\mathcal{P}_{i}^{1})+\Gamma_{i}^{1},\mathbb{H}(\mathcal{P}_{j}^{1})+\Gamma_{j}^{1}\}.
\]
On the other hand, by \eqref{eq:Gammai1 def}, $\mathbb{H}(\mathcal{P}_{i}^{1})+\Gamma_{i}^{1}=\Phi(\mathcal{P}_{i}^{1},\breve{\mathcal{P}}_{i}^{1})\le\Phi(\mathcal{P}_{i}^{1},\mathcal{P}_{j}^{1})$
and similarly $\mathbb{H}(\mathcal{P}_{j}^{1})+\Gamma_{j}^{1}\le\Phi(\mathcal{P}_{j}^{1},\mathcal{P}_{i}^{1})$,
which contradict the displayed inequality. Thus, the collections $\mathcal{V}_{i}^{1}$
for $i\in[1,\nu_{0}]$ are disjoint.

By \eqref{eq:Vi1 def} and the fact that $\mathcal{P}_{i}^{1}$ is
connected, it follows immediately that $\mathcal{V}_{i}^{1}$ is also
connected. Moreover, by the definition of $\mathcal{V}_{i}^{1}$,
it is clear that $\mathbb{H}(\eta)<\mathbb{H}(\mathcal{P}_{i}^{1})+\Gamma_{i}^{1}$
for all $\eta\in\mathcal{V}_{i}^{1}$ and $\mathbb{H}(\zeta)\ge\mathbb{H}(\mathcal{P}_{i}^{1})+\Gamma_{i}^{1}$
for all $\zeta\in\partial\mathcal{V}_{i}^{1}$. These facts prove
that $\mathcal{V}_{i}^{1}$ is a cycle.

Since $\mathcal{F}(\mathcal{V}_{i}^{1})$ is a union of stable plateaux
by Lemma \ref{lem:cyc bot P} and $\mathcal{V}_{i}^{1}\cap\mathcal{P}_{j}^{1}=\emptyset$
for all $j\ne i$ by the disjointness, we obtain that $\mathcal{F}(\mathcal{V}_{i}^{1})=\mathcal{P}_{i}^{1}$.
Finally, there exists $\xi\in\partial\mathcal{V}_{i}^{1}$ such that
$\mathbb{H}(\xi)=\mathbb{H}(\mathcal{P}_{i}^{1})+\Gamma_{i}^{1}$
due to the existence of a path $\mathcal{P}_{i}^{1}\to\breve{\mathcal{P}}_{i}^{1}$
of height $\mathbb{H}(\mathcal{P}_{i}^{1})+\Gamma_{i}^{1}$ guaranteed
by \eqref{eq:Gammai1 def}. Collecting these observations, we calculate
as
\[
\Gamma^{\mathcal{V}_{i}^{1}}=\min_{\partial\mathcal{V}_{i}^{1}}\mathbb{H}-\min_{\mathcal{V}_{i}^{1}}\mathbb{H}=(\mathbb{H}(\mathcal{P}_{i}^{1})+\Gamma_{i}^{1})-\mathbb{H}(\mathcal{P}_{i}^{1})=\Gamma_{i}^{1}.
\]
This completes the proof of Lemma \ref{lem:Vi1 cycle}.
\end{proof}
Collect
\begin{equation}
\mathscr{C}^{1}:=\{\mathcal{V}_{i}^{1}:i\in[1,\nu_{0}]\}.\label{eq:C1 def}
\end{equation}
We apply the general construction given in Definition \ref{def:gen const}
to $(\mathscr{C}^{1},\Gamma^{\star,1})$. By \eqref{eq:PC def}
and Lemma \ref{lem:Vi1 cycle}, it readily holds that $\mathscr{P}^{\mathscr{C}^{1}}=\mathscr{P}^{1}$.
\begin{definition}
\label{nota:abbr 1}We abbreviate $\mathscr{C}^{\star,1}:=(\mathscr{C}^{1})^{\star}$,
$\mathscr{C}^{\sharp,1}:=(\mathscr{C}^{1})^{\sharp}$, $\mathscr{P}^{\star,1}:=\mathscr{P}^{(\mathscr{C}^{1})^{\star}}$,\\
$\mathscr{P}^{\sharp,1}:=\mathscr{P}^{(\mathscr{C}^{1})^{\sharp}}$,
$\Delta^{1}:=\Delta^{\mathscr{C}^{1}}$, $\Omega^{1}:=\Omega^{\mathscr{C}^{1}}$,
$\mathfrak{X}^{1}(t):=\mathfrak{X}^{\mathscr{C}^{1}}(t)$, $\mathfrak{R}^{1}(\cdot,\cdot):=\mathfrak{R}^{\mathscr{C}^{1}}(\cdot,\cdot)$,\\
$\mathfrak{X}^{\star,1}(t):=\mathfrak{X}^{(\mathscr{C}^{1})^{\star}}(t)$,
and $\mathfrak{R}^{\star,1}(\cdot,\cdot):=\mathfrak{R}^{(\mathscr{C}^{1})^{\star}}(\cdot,\cdot)$.
\end{definition}

Note that by \eqref{eq:Gamma-star1 def} and Lemma \ref{lem:Vi1 cycle},
$\Gamma^{\mathcal{V}_{i}^{1}}\ge\Gamma^{\star,1}$ for all $i\in[1,\nu_{0}]$,
thus it holds that
\begin{equation}
\mathscr{C}^{\star,1}=\mathscr{C}^{1}\quad\text{and}\quad\mathscr{C}^{\sharp,1}=\emptyset,\label{eq:C-star1 C-sharp1 prop}
\end{equation}
and accordingly,
\begin{equation}
\mathscr{P}^{\star,1}=\mathscr{P}^{1}\quad\text{and}\quad\mathscr{P}^{\sharp,1}=\emptyset.\label{eq:P-star1 P-sharp1 prop}
\end{equation}
Now, $\{\mathfrak{X}^{\star,1}(t)\}_{t\ge0}$ decomposes $\mathscr{P}^{\star,1}$
into
\begin{equation}
\mathscr{P}^{\star,1}=\mathscr{P}_{1}^{\star,1}\cup\cdots\cup\mathscr{P}_{\nu_{1}}^{\star,1}\cup\mathscr{P}_{{\rm tr}}^{\star,1},\label{eq:P-star1 dec}
\end{equation}
where $\mathscr{P}_{1}^{\star,1},\dots,\mathscr{P}_{\nu_{1}}^{\star,1}$
are irreducible components and $\mathscr{P}_{{\rm tr}}^{\star,1}$
is the collection of transient elements. Accordingly, decompose
\begin{equation}
\mathscr{C}^{\star,1}=\mathscr{C}_{1}^{\star,1}\cup\cdots\cup\mathscr{C}_{\nu_{1}}^{\star,1}\cup\mathscr{C}_{{\rm tr}}^{\star,1},\label{eq:C-star1 dec}
\end{equation}
where $\mathscr{C}_{m}^{\star,1}:=\{\mathcal{V}_{i}^{1}\in\mathscr{C}^{\star,1}:\mathcal{P}_{i}^{1}\in\mathscr{P}_{m}^{\star,1}\}$
for $m\in\{1,\dots,\nu_{1},{\rm tr}\}$.

\subsubsection*{From level $\mathfrak{h}-1$ to level $\mathfrak{h}$}

For an integer $\mathfrak{h}\ge2$, suppose that we are given a collection $\mathscr{C}^{\mathfrak{h}-1}$
of disjoint cycles in $\overline{\Omega}$ and a collection $\mathscr{P}^{\mathfrak{h}-1}=\{\mathcal{F}(\mathcal{C}):\mathcal{C}\in\mathscr{C}^{\mathfrak{h}-1}\}$,
along with decompositions
\begin{equation}
\mathscr{C}^{\mathfrak{h}-1}=\bigcup_{m\in[1,\nu_{\mathfrak{h}-1}]}\mathscr{C}_{m}^{\star,\mathfrak{h}-1}\cup\mathscr{C}_{{\rm tr}}^{\star,\mathfrak{h}-1}\cup\mathscr{C}^{\sharp,\mathfrak{h}-1}\label{eq:Ch-1 Ph-1 ind hyp}
\end{equation}
and
\[
\mathscr{P}^{\mathfrak{h}-1}=\bigcup_{m\in[1,\nu_{\mathfrak{h}-1}]}\mathscr{P}_{m}^{\star,\mathfrak{h}-1}\cup\mathscr{P}_{{\rm tr}}^{\star,\mathfrak{h}-1}\cup\mathscr{P}^{\sharp,\mathfrak{h}-1},
\]
where $\mathscr{P}_{m}^{\star,\mathfrak{h}-1}=\{\mathcal{F}(\mathcal{C}):\mathcal{C}\in\mathscr{C}_{m}^{\star,\mathfrak{h}-1}\}$
for $m\in\{1,\dots,\nu_{\mathfrak{h}-1},{\rm tr}\}$ and $\mathscr{P}^{\sharp,\mathfrak{h}-1}=\{\mathcal{F}(\mathcal{C}):\mathcal{C}\in\mathscr{C}^{\sharp,\mathfrak{h}-1}\}$.
Note that Lemma \ref{lem:Vi1 cycle}, \eqref{eq:C-star1 C-sharp1 prop},
\eqref{eq:P-star1 P-sharp1 prop}, \eqref{eq:P-star1 dec} and \eqref{eq:C-star1 dec}
guarantee this assumption for $h=2$. Further suppose that $\nu_{\mathfrak{h}-1}\ge2$.
Then, for each $i\in[1,\nu_{\mathfrak{h}-1}]$ define
\begin{equation}
\mathcal{P}_{i}^{\mathfrak{h}}:=\bigcup_{\mathcal{P}\in\mathscr{P}_{i}^{\star,\mathfrak{h}-1}}\mathcal{P}\quad\text{and}\quad\mathscr{P}^{\star,\mathfrak{h}}:=\{\mathcal{P}_{i}^{\mathfrak{h}}:i\in[1,\nu_{\mathfrak{h}-1}]\}.\label{eq:P-starh def}
\end{equation}

\begin{lemma}
\label{lem:Pih energy}For each $i\in[1,\nu_{\mathfrak{h}-1}]$, it holds that
$\mathbb{H}(\eta)=\mathbb{H}(\xi)$ for all $\eta,\xi\in\mathcal{P}_{i}^{\mathfrak{h}}$.
\end{lemma}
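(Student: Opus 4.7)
The plan is to reduce the claim to showing that all elements of $\mathscr{P}_i^{\star, \mathfrak{h}-1}$, regarded as subsets of $\overline{\Omega}$, share a single common $\mathbb{H}$-value, and then to establish that via an energy-monotonicity argument along trace-chain paths. First, I would observe that each $\mathcal{P} \in \mathscr{P}^{\mathfrak{h}-1}$ is the bottom $\mathcal{F}(\mathcal{C})$ of some $\mathcal{C} \in \mathscr{C}^{\mathfrak{h}-1}$; by Lemma \ref{lem:cyc bot P}, $\mathcal{P}$ is a union of stable plateaux, and since $\mathcal{P}$ coincides with the set of minimizers of $\mathbb{H}$ on $\mathcal{C}$, the function $\mathbb{H}$ is constant on $\mathcal{P}$ with value $\min_{\mathcal{C}} \mathbb{H}$, which I will denote $\mathbb{H}(\mathcal{P})$. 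The claim for $\mathcal{P}_i^{\mathfrak{h}} = \bigcup_{\mathcal{P} \in \mathscr{P}_i^{\star, \mathfrak{h}-1}} \mathcal{P}$ will then reduce to $\mathbb{H}(\mathcal{P}) = \mathbb{H}(\mathcal{P}')$ for all $\mathcal{P}, \mathcal{P}' \in \mathscr{P}_i^{\star, \mathfrak{h}-1}$.

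The case $|\mathscr{P}_i^{\star, \mathfrak{h}-1}| = 1$ is trivial, so I would focus on $|\mathscr{P}_i^{\star, \mathfrak{h}-1}| \geq 2$. By irreducibility of $\mathfrak{X}^{\star, \mathfrak{h}-1}$ restricted to $\mathscr{P}_i^{\star, \mathfrak{h}-1}$, each element has a positive outgoing trace rate, so by \eqref{eq:R-Cstar def} combined with the Case 3 clause in \eqref{eq:RC def}, the underlying cycle $\mathcal{V}$ with $\mathcal{F}(\mathcal{V}) = \mathcal{Q} \in \mathscr{P}_i^{\star, \mathfrak{h}-1}$ must satisfy $\Gamma^{\mathcal{V}} \leq \Gamma^{\star, \mathfrak{h}-1}$; combined with $\mathcal{V} \in \mathscr{C}^{\star, \mathfrak{h}-1}$, this forces $\Gamma^{\mathcal{V}} = \Gamma^{\star, \mathfrak{h}-1}$. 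I would then fix $\mathcal{P}, \mathcal{P}' \in \mathscr{P}_i^{\star, \mathfrak{h}-1}$ and choose a sequence $\mathcal{P} = \mathcal{Q}_0 \to \cdots \to \mathcal{Q}_N = \mathcal{P}'$ of positive trace transitions inside $\mathscr{P}_i^{\star, \mathfrak{h}-1}$. For each step, \eqref{eq:R-Cstar def} yields some $\eta_n \in \partial^{\star} \mathcal{V}_n$ at energy $\mathbb{H}(\mathcal{Q}_n) + \Gamma^{\star, \mathfrak{h}-1}$ from which the contracted chain $\mathfrak{X}^{\mathfrak{h}-1}$ reaches $\mathcal{Q}_{n+1}$ before any other element of $\mathscr{P}^{\star, \mathfrak{h}-1}$. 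The key observation is that along this trajectory, the energies at successive visits to $\Delta^{\mathfrak{h}-1}$ form a non-increasing sequence: Case 1 transitions in \eqref{eq:RC def} explicitly satisfy this, and each passage through an intermediate super-vertex $\mathcal{F}(\mathcal{C}') \in \mathscr{P}^{\sharp, \mathfrak{h}-1}$ enters from some $\xi \in \partial \mathcal{C}'$ with $\mathbb{H}(\xi) \geq \mathbb{H}(\mathcal{F}(\mathcal{C}')) + \Gamma^{\mathcal{C}'}$ and exits via Case 3 at energy exactly $\mathbb{H}(\mathcal{F}(\mathcal{C}')) + \Gamma^{\mathcal{C}'} \leq \mathbb{H}(\xi)$. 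Since entry into $\mathcal{Q}_{n+1}$ necessarily occurs via a Case 2 step from some $\zeta_n \in \Delta^{\mathfrak{h}-1} \cap \partial \mathcal{V}_{n+1}$, I obtain
\[
\mathbb{H}(\mathcal{Q}_{n+1}) + \Gamma^{\star, \mathfrak{h}-1} \leq \mathbb{H}(\zeta_n) \leq \mathbb{H}(\eta_n) = \mathbb{H}(\mathcal{Q}_n) + \Gamma^{\star, \mathfrak{h}-1},
\]
whence $\mathbb{H}(\mathcal{Q}_{n+1}) \leq \mathbb{H}(\mathcal{Q}_n)$.

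Chaining these inequalities will give $\mathbb{H}(\mathcal{P}) \geq \mathbb{H}(\mathcal{P}')$, and the reverse inequality then follows by applying the same argument to a trace-chain path from $\mathcal{P}'$ back to $\mathcal{P}$, which is guaranteed by irreducibility. I anticipate the main obstacle to be the careful accounting of energy during excursions through the super-vertices in $\mathscr{P}^{\sharp, \mathfrak{h}-1}$, which must be traversed without absorption and whose Case 3 exits momentarily raise the energy by $\Gamma^{\mathcal{C}'}$; the crucial resolution is precisely the entry-dominates-exit estimate above, which salvages monotonicity across the $\Delta^{\mathfrak{h}-1}$-visits despite these excursions.
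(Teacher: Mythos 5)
Your proof is correct and is the natural argument. Since the review article defers the proof of Lemma~\ref{lem:Pih energy} to the full version \cite{Kim24}, a line-by-line comparison is not possible here, but your route (reduce to constancy on $\mathscr{P}_i^{\star,\mathfrak{h}-1}$; use irreducibility to force $\Gamma^{\mathcal{V}}=\Gamma^{\star,\mathfrak{h}-1}$ for every cycle underlying a state of the irreducible class; propagate the non-increase of energy along a positive-probability contracted-chain trajectory; close the loop by symmetry of irreducibility) is exactly the type of argument the construction in Definition~\ref{def:gen const} is set up to support, and all the inequalities you write are justified.

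One step deserves to be made explicit rather than left implicit. You assert that the entry into $\mathcal{Q}_{n+1}$ ``necessarily occurs via a Case~2 step from some $\zeta_n\in\Delta^{\mathfrak{h}-1}\cap\partial\mathcal{V}_{n+1}$,'' and earlier you tacitly treat the exit state $\eta'\in\partial^{\star}\mathcal{C}'$ after a Case~3 move as a genuine element of $\Delta^{\mathfrak{h}-1}$. Both of these rest on the fact that for any two disjoint cycles $\mathcal{C},\mathcal{C}''\in\mathscr{C}^{\mathfrak{h}-1}$ one has $\partial\mathcal{C}\cap\mathcal{C}''=\emptyset$ (hence a fortiori $\partial^{\star}\mathcal{C}\subseteq\Delta^{\mathfrak{h}-1}$), which is what rules out a Case~3 exit landing directly inside another cycle, in particular inside $\mathcal{V}_{n+1}$. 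This is easy to prove: if $\eta\in\partial\mathcal{C}\cap\mathcal{C}''$ with $\zeta\in\mathcal{C}$, $\zeta\sim\eta$, then $\zeta\notin\mathcal{C}''$ forces $\zeta\in\partial\mathcal{C}''$, so the cycle property gives simultaneously $\mathbb{H}(\zeta)>\mathbb{H}(\eta)$ (from $\mathcal{C}''$) and $\mathbb{H}(\eta)>\mathbb{H}(\zeta)$ (from $\mathcal{C}$), a contradiction. Once this is recorded, the rest of your monotonicity bookkeeping, and in particular the displayed chain $\mathbb{H}(\mathcal{Q}_{n+1})+\Gamma^{\star,\mathfrak{h}-1}\le\mathbb{H}(\zeta_n)\le\mathbb{H}(\eta_n)=\mathbb{H}(\mathcal{Q}_n)+\Gamma^{\star,\mathfrak{h}-1}$, is watertight.
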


As in \eqref{eq:Gammai1 def}, define the \emph{$\mathfrak{h}$-th depth} as
\begin{equation}
\Gamma_{i}^{\mathfrak{h}}:=\Phi(\mathcal{P}_{i}^{\mathfrak{h}},\breve{\mathcal{P}}_{i}^{\mathfrak{h}})-\mathbb{H}(\mathcal{P}_{i}^{\mathfrak{h}})>0,\quad\text{where}\quad\breve{\mathcal{P}}_{i}^{\mathfrak{h}}:=\bigcup_{j\in[1,\nu_{\mathfrak{h}-1}]:j\ne i}\mathcal{P}_{j}^{\mathfrak{h}}.\label{eq:Gammaih def}
\end{equation}
Then, write
\begin{equation}
\Gamma^{\star,\mathfrak{h}}:=\min_{i\in[1,\nu_{\mathfrak{h}-1}]}\Gamma_{i}^{\mathfrak{h}}>0.\label{eq:Gamma-starh def}
\end{equation}

\begin{lemma}
\label{lem:Gamma-star inc}It holds that $\Gamma^{\star,\mathfrak{h}}>\Gamma^{\star,\mathfrak{h}-1}$.
\end{lemma}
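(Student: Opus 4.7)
The plan is to prove the stronger pointwise statement $\Gamma_i^{\mathfrak{h}} > \Gamma^{\star,\mathfrak{h}-1}$ for every $i \in [1,\nu_{\mathfrak{h}-1}]$, from which the lemma follows by taking the minimum over $i$ in \eqref{eq:Gamma-starh def}. The argument is by contradiction, hinging on the closedness of the irreducible components $\mathscr{P}_m^{\star,\mathfrak{h}-1}$ of the trace chain $\mathfrak{X}^{\star,\mathfrak{h}-1}$. Suppose for some $i$ that $\Gamma_i^{\mathfrak{h}} \leq \Gamma^{\star,\mathfrak{h}-1}$, set $h := \mathbb{H}(\mathcal{P}_i^{\mathfrak{h}}) + \Gamma^{\star,\mathfrak{h}-1}$, and invoke Lemma \ref{lem:Pih energy} together with \eqref{eq:Gammaih def} to extract an index $j \neq i$, elements $\mathcal{P} \in \mathscr{P}_i^{\star,\mathfrak{h}-1}$ and $\mathcal{P}' \in \mathscr{P}_j^{\star,\mathfrak{h}-1}$, the associated cycles $\mathcal{V}, \mathcal{V}' \in \mathscr{C}^{\mathfrak{h}-1}$ (with $\mathcal{F}(\mathcal{V}) = \mathcal{P}$ and $\mathcal{F}(\mathcal{V}') = \mathcal{P}'$), and a path $\omega$ in $\overline{\Omega}$ from $\mathcal{P}$ to $\mathcal{P}'$ satisfying $\Phi_\omega \leq h$.

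The main step is to lift $\omega$ to a positive-rate trajectory of $\mathfrak{X}^{\mathfrak{h}-1}$ from $\mathcal{F}(\mathcal{V})$ to $\mathcal{F}(\mathcal{V}')$. I would collapse every maximal sub-excursion of $\omega$ inside a cycle $\mathcal{W} \in \mathscr{C}^{\mathfrak{h}-1}$ to the single contracted state $\mathcal{F}(\mathcal{W})$, and verify that each remaining step falls under one of the three lines of \eqref{eq:RC def}: downhill or flat moves inside $\Delta^{\mathfrak{h}-1}$, entries into a cycle via its boundary, and shallow-cycle exits through $\partial^{\star}$. The trace formula \eqref{eq:R-Cstar def} would then force $\mathcal{P}$ and $\mathcal{P}'$ to communicate in $\mathfrak{X}^{\star,\mathfrak{h}-1}$, contradicting their membership in distinct closed irreducible components. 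For cycles $\mathcal{W}$ at the top energy $\mathbb{H}(\mathcal{W}) = \mathbb{H}(\mathcal{P}_i^{\mathfrak{h}})$ the bound $\Phi_\omega \leq h = \mathbb{H}(\mathcal{W}) + \Gamma^{\star,\mathfrak{h}-1}$ immediately forces both $\Gamma^{\mathcal{W}} = \Gamma^{\star,\mathfrak{h}-1}$ and the exit point into $\partial^{\star}\mathcal{W}$, so the lift at such cycles is automatic.

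The hard part is verifying the lift at cycles $\mathcal{W}$ along $\omega$ with $\mathbb{H}(\mathcal{W}) < \mathbb{H}(\mathcal{P}_i^{\mathfrak{h}})$ (which may be deep and therefore absorbing in $\mathfrak{X}^{\star,\mathfrak{h}-1}$) and at transient uphill steps of $\omega$, whose rate in $\mathfrak{X}^{\mathfrak{h}-1}$ is zero. The absorbing case is actually benign: the moment the lift reaches an absorbing $\mathcal{F}(\mathcal{W}) \notin \mathscr{P}_i^{\star,\mathfrak{h}-1}$, the closedness contradiction is already attained without having to carry the lift all the way to $\mathcal{F}(\mathcal{V}')$. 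The transient-uphill case is resolved by rerouting such steps through an adjacent cycle representative, relying on the nested and disjoint cycle structure recalled in \cite[Proposition 6.8]{OV}.
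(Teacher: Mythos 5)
The paper states Lemma \ref{lem:Gamma-star inc} without proof (deferring to \cite{Kim24}), so I can only assess the internal soundness of your argument rather than compare it line-by-line.

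Your overall strategy---arguing by contradiction that $\Gamma_i^{\mathfrak{h}}\le\Gamma^{\star,\mathfrak{h}-1}$ would let one lift an optimal path from $\mathcal{P}\in\mathscr{P}_i^{\star,\mathfrak{h}-1}$ to $\mathcal{P}'\in\mathscr{P}_j^{\star,\mathfrak{h}-1}$ into a positive-rate trajectory of $\mathfrak{X}^{\mathfrak{h}-1}$, contradicting the closedness of $\mathscr{P}_i^{\star,\mathfrak{h}-1}$ under $\mathfrak{X}^{\star,\mathfrak{h}-1}$---is the natural one, and the preliminary reductions you carry out are correct: forcing $\Gamma^{\mathcal{V}}=\Gamma^{\star,\mathfrak{h}-1}$ and the exit through $\partial^\star\mathcal{V}$ is clean, and the observation that an absorbing state of $\mathfrak{X}^{\star,\mathfrak{h}-1}$ landed on by the lift cannot belong to $\mathscr{P}_i^{\star,\mathfrak{h}-1}$ (it would make that class a singleton incompatible with $\Gamma^{\mathcal{V}}=\Gamma^{\star,\mathfrak{h}-1}$) is an important, correct point.

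The gap is exactly where you flag it: the "transient uphill" case. You claim those steps can be "rerouted through an adjacent cycle representative, relying on the nested and disjoint cycle structure of \cite[Proposition 6.8]{OV}," but that proposition only tells you two intersecting cycles are nested; it does not manufacture a cycle of $\mathscr{C}^{\mathfrak{h}-1}$ in which an uphill step of $\omega$ would become a legal $\partial^\star$-exit. Concretely, if $\omega$ is at $\eta\in\Delta^{\mathfrak{h}-1}$ and the next step of $\omega$ is an uphill neighbor $\eta'\in\Delta^{\mathfrak{h}-1}$ with $\mathbb{H}(\eta)<\mathbb{H}(\eta')\le h$, then $\mathfrak{R}^{\mathfrak{h}-1}(\eta,\eta')=0$ by \eqref{eq:RC def}, and the set $\{\xi:\Phi(\eta,\xi)<\mathbb{H}(\eta')\}$, which would provide the detour in a Freidlin--Wentzell cycle-tree picture, is generally \emph{not} an element of $\mathscr{C}^{\mathfrak{h}-1}$: the chain $\mathfrak{X}^{\mathfrak{h}-1}$ only contracts the specific cycles in $\mathscr{C}^{\mathfrak{h}-1}$, and nothing in Definition \ref{def:gen const} guarantees that every sub-level basin of depth $\le\Gamma^{\star,\mathfrak{h}-1}$ that $\omega$ dips into belongs to $\mathscr{C}^{\mathfrak{h}-1}$. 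To close the gap one needs a genuine structural invariant of the inductive construction (e.g.\ that every $\eta\in\Delta^{\mathfrak{h}-1}$ reachable by $\mathfrak{X}^{\mathfrak{h}-1}$ from $\partial^\star\mathcal{V}$ lies above the exit heights of the nearby cycles in $\mathscr{C}^{\mathfrak{h}-1}$, so that the "water-filling" descent from $\partial^\star\mathcal{V}$ eventually recovers $\eta'$), and that invariant has to be proved, not merely asserted. As written, your lift may get stuck on the non-reachable side of an uphill boundary of the $\mathfrak{X}^{\mathfrak{h}-1}$-reachable set, leaving the contradiction unestablished. The proposal is therefore a plausible outline rather than a complete proof.
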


Next, define
\begin{equation}
\mathcal{V}_{i}^{\mathfrak{h}}:=\{\eta\in\Omega:\Phi(\mathcal{P}_{i}^{\mathfrak{h}},\eta)-\mathbb{H}(\mathcal{P}_{i}^{\mathfrak{h}})<\Gamma_{i}^{\mathfrak{h}}\}.\label{eq:Vih def}
\end{equation}

\begin{lemma}
\label{lem:Vih cycle}Collections $\mathcal{V}_{i}^{\mathfrak{h}}$ for $i\in[1,\nu_{\mathfrak{h}-1}]$
are disjoint cycles in $\overline{\Omega}$ with bottom $\mathcal{P}_{i}^{\mathfrak{h}}$
and depth $\Gamma_{i}^{\mathfrak{h}}$.
\end{lemma}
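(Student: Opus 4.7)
The plan is to adapt the proof of Lemma \ref{lem:Vi1 cycle} to this inductive step, verifying in order: containment in $\overline{\Omega}$, pairwise disjointness, connectedness, the cycle property \eqref{eq:cyc def}, and finally the identification $\mathcal{F}(\mathcal{V}_i^{\mathfrak{h}}) = \mathcal{P}_i^{\mathfrak{h}}$ together with depth $\Gamma_i^{\mathfrak{h}}$. The first two items mimic the base case essentially verbatim. Containment follows from $\mathbb{H}(\mathcal{P}_i^{\mathfrak{h}}) + \Gamma_i^{\mathfrak{h}} \le \overline{\Phi}$. For disjointness, if $\eta \in \mathcal{V}_i^{\mathfrak{h}} \cap \mathcal{V}_j^{\mathfrak{h}}$ with $i \ne j$, then \eqref{eq:Phi max ineq} yields
\[
\Phi(\mathcal{P}_i^{\mathfrak{h}}, \mathcal{P}_j^{\mathfrak{h}}) \le \max\{\Phi(\mathcal{P}_i^{\mathfrak{h}}, \eta), \Phi(\mathcal{P}_j^{\mathfrak{h}}, \eta)\} < \max\{\mathbb{H}(\mathcal{P}_i^{\mathfrak{h}}) + \Gamma_i^{\mathfrak{h}}, \mathbb{H}(\mathcal{P}_j^{\mathfrak{h}}) + \Gamma_j^{\mathfrak{h}}\},
\]
which contradicts \eqref{eq:Gammaih def} via $\mathcal{P}_j^{\mathfrak{h}} \subseteq \breve{\mathcal{P}}_i^{\mathfrak{h}}$ and $\mathcal{P}_i^{\mathfrak{h}} \subseteq \breve{\mathcal{P}}_j^{\mathfrak{h}}$.

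The first genuinely new ingredient is connectedness. As before, \eqref{eq:Vih def} already gives, for any $\eta \in \mathcal{V}_i^{\mathfrak{h}}$, a path to $\mathcal{P}_i^{\mathfrak{h}}$ whose intermediate configurations all lie in $\mathcal{V}_i^{\mathfrak{h}}$, so it suffices to check that $\mathcal{P}_i^{\mathfrak{h}} = \bigcup_{\mathcal{P} \in \mathscr{P}_i^{\star,\mathfrak{h}-1}} \mathcal{P}$ is path-connected inside $\mathcal{V}_i^{\mathfrak{h}}$. Any two elements of the irreducible class $\mathscr{P}_i^{\star,\mathfrak{h}-1}$ are linked by a sequence of positive-rate transitions of $\{\mathfrak{X}^{\star,\mathfrak{h}-1}(t)\}$; each such transition is realized in $\overline{\Omega}$ by a path exiting one level-$(\mathfrak{h}-1)$ cycle (which, being part of an irreducible class, has depth exactly $\Gamma^{\star,\mathfrak{h}-1}$) through its lowest saddle and descending into the next one, so its height is at most $\mathbb{H}(\mathcal{P}_i^{\mathfrak{h}}) + \Gamma^{\star,\mathfrak{h}-1}$. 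By Lemma \ref{lem:Gamma-star inc} and \eqref{eq:Gamma-starh def}, this is strictly less than $\mathbb{H}(\mathcal{P}_i^{\mathfrak{h}}) + \Gamma^{\star,\mathfrak{h}} \le \mathbb{H}(\mathcal{P}_i^{\mathfrak{h}}) + \Gamma_i^{\mathfrak{h}}$, so the entire linking path lies in $\mathcal{V}_i^{\mathfrak{h}}$.

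The cycle property is a routine extension argument: for $\xi \in \partial \mathcal{V}_i^{\mathfrak{h}}$, concatenating the edge $\eta \sim \xi$ (for a suitable $\eta \in \mathcal{V}_i^{\mathfrak{h}}$) with a path $\mathcal{P}_i^{\mathfrak{h}} \to \eta$ of height $<\mathbb{H}(\mathcal{P}_i^{\mathfrak{h}}) + \Gamma_i^{\mathfrak{h}}$ forces $\mathbb{H}(\xi) \ge \mathbb{H}(\mathcal{P}_i^{\mathfrak{h}}) + \Gamma_i^{\mathfrak{h}}$, lest $\xi$ be drawn back into $\mathcal{V}_i^{\mathfrak{h}}$. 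The main obstacle is the bottom identification: Lemma \ref{lem:cyc bot P} gives that $\mathcal{F}(\mathcal{V}_i^{\mathfrak{h}})$ is a union of stable plateaux from $\mathscr{P}^1$, and the already-established disjointness excludes every plateau lying in $\mathcal{P}_j^{\mathfrak{h}}$ with $j \ne i$. What is left is to rule out candidate stable plateaux $\mathcal{Q} \in \mathscr{P}^1$ of energy strictly below $\mathbb{H}(\mathcal{P}_i^{\mathfrak{h}})$ that are not assigned to any $\mathcal{P}_j^{\mathfrak{h}}$ at the current level, namely those living in transient components at some earlier level. This is handled by an inductive nesting argument: any such $\mathcal{Q}$ would sit in a strictly smaller level-$(\mathfrak{h}-1)$ cycle whose bottom is already pinned down by the induction hypothesis for Lemma \ref{lem:Vih cycle}, and since cycles either nest or are disjoint, $\mathcal{V}_i^{\mathfrak{h}}$ would be forced to meet another $\mathcal{V}_j^{\mathfrak{h}}$, contradicting the disjointness already proved. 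The depth $\Gamma^{\mathcal{V}_i^{\mathfrak{h}}} = \Gamma_i^{\mathfrak{h}}$ then follows from a saddle path realizing the minimum in \eqref{eq:Gammaih def}, exactly as in the base case.
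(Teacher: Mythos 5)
The paper omits the proof of Lemma \ref{lem:Vih cycle} (referring to \cite{Kim24}), so I cannot compare your argument against the paper's own; I evaluate it on its merits. The parts transplanted verbatim from Lemma \ref{lem:Vi1 cycle} --- containment in $\overline{\Omega}$, pairwise disjointness, and the cycle inequality \eqref{eq:cyc def} --- are correct, and you rightly identify the two places where something genuinely new is needed: connectedness of $\mathcal{V}_i^{\mathfrak{h}}$ and the bottom identification $\mathcal{F}(\mathcal{V}_i^{\mathfrak{h}}) = \mathcal{P}_i^{\mathfrak{h}}$.

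On connectedness, the argument is essentially right but elides a step: you link distinct $\mathcal{P}, \mathcal{P}' \in \mathscr{P}_i^{\star,\mathfrak{h}-1}$ by positive-rate $\mathfrak{X}^{\star,\mathfrak{h}-1}$-transitions, but for $\mathfrak{h} \ge 3$ each such $\mathcal{P}$ is itself a possibly disconnected union of stable plateaux, and you still need it to be connected inside $\mathcal{V}_i^{\mathfrak{h}}$. This comes from the inductive hypothesis: $\mathcal{P} = \mathcal{F}(\mathcal{V}_j^{\mathfrak{h}-1})$ for the corresponding previous-level cycle, which is connected, and (since $\mathcal{P}$ has minimal depth $\Gamma^{\star,\mathfrak{h}-1}$ when it is non-absorbing, or by comparing $\breve{\mathcal{P}}_i^{\mathfrak{h}}\subseteq\breve{\mathcal{P}}_j^{\mathfrak{h}-1}$ in the singleton case) satisfies $\mathcal{V}_j^{\mathfrak{h}-1} \subseteq \mathcal{V}_i^{\mathfrak{h}}$. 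This is worth making explicit.

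The bottom identification is where there is a real gap. You invoke the nesting dichotomy to conclude that ``$\mathcal{V}_i^{\mathfrak{h}}$ would be forced to meet another $\mathcal{V}_j^{\mathfrak{h}}$, contradicting the disjointness already proved,'' but this only addresses one branch. Let $\mathcal{C}\in\mathscr{C}_{\rm tr}^{\star,\mathfrak{h}-1}\cup\mathscr{C}^{\sharp,\mathfrak{h}-1}$ be the level-$(\mathfrak{h}-1)$ cycle containing $\mathcal{Q}$; then $\mathcal{C}\cap\mathcal{V}_i^{\mathfrak{h}}\neq\emptyset$ gives either $\mathcal{V}_i^{\mathfrak{h}}\subseteq\mathcal{C}$ or $\mathcal{C}\subseteq\mathcal{V}_i^{\mathfrak{h}}$. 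The first branch is indeed contradictory, but via disjointness of the cycles in $\mathscr{C}^{\mathfrak{h}-1}$ (it forces $\mathcal{C}$ to meet some $\mathcal{V}_j^{\mathfrak{h}-1}\in\mathscr{C}^{\mathfrak{h}-1}$), not via disjointness of the $\mathcal{V}_j^{\mathfrak{h}}$'s --- $\mathcal{C}$ is not one of the $\mathcal{V}_j^{\mathfrak{h}}$'s. The second branch $\mathcal{C}\subseteq\mathcal{V}_i^{\mathfrak{h}}$ is the dangerous one and is exactly the scenario in which the lemma would fail; it is fully compatible with disjointness of the $\mathcal{V}_j^{\mathfrak{h}}$'s, so nothing you have established rules it out. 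What is actually needed is a separate argument --- based on the recurrence of $\mathscr{P}_i^{\star,\mathfrak{h}-1}$ under $\mathfrak{X}^{\star,\mathfrak{h}-1}$ and the one-sided monotonicity of the contracted dynamics \eqref{eq:RC def} --- showing that a transient or sharp cycle nested inside $\mathcal{V}_i^{\mathfrak{h}}$ cannot have bottom at energy $\le \mathbb{H}(\mathcal{P}_i^{\mathfrak{h}})$. You also need to exclude surplus stable plateaux at energy exactly $\mathbb{H}(\mathcal{P}_i^{\mathfrak{h}})$, not only strictly below it, since $\mathcal{F}(\mathcal{V}_i^{\mathfrak{h}})$ must equal, not merely contain, $\mathcal{P}_i^{\mathfrak{h}}$. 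As written, the bottom identification is not established.
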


Now, define
\begin{equation}
\mathscr{C}^{\mathfrak{h}}:=\{\mathcal{V}_{i}^{\mathfrak{h}}:i\in[1,\nu_{\mathfrak{h}-1}]\}\cup\{\mathcal{C}\in\mathscr{C}_{{\rm tr}}^{\star,\mathfrak{h}-1}\cup\mathscr{C}^{\sharp,\mathfrak{h}-1}:\mathcal{C}\cap\mathcal{V}_{i}^{\mathfrak{h}}=\emptyset,\ \forall i\in[1,\nu_{\mathfrak{h}-1}]\}.\label{eq:Ch def}
\end{equation}
Then, we apply the construction in Definition \ref{def:gen const}
to $(\mathscr{C}^{\mathfrak{h}},\Gamma^{\star,\mathfrak{h}})$.
\begin{lemma}
\label{lem:Ch prop}It holds that $(\mathscr{C}^{\mathfrak{h}})^{\star}=\{\mathcal{V}_{i}^{\mathfrak{h}}:i\in[1,\nu_{\mathfrak{h}-1}]\}$
and
\[
(\mathscr{C}^{\mathfrak{h}})^{\sharp}=\{\mathcal{C}\in\mathscr{C}_{{\rm tr}}^{\star,\mathfrak{h}-1}\cup\mathscr{C}^{\sharp,\mathfrak{h}-1}:\mathcal{C}\cap\mathcal{V}_{i}^{\mathfrak{h}}=\emptyset, \  \forall i\in[1,\nu_{\mathfrak{h}-1}]\}.
\]
\end{lemma}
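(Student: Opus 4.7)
The plan is to classify every cycle in $\mathscr{C}^{\mathfrak{h}}$ according to whether its depth reaches the threshold $\Gamma^{\star,\mathfrak{h}}$, since by the definition \eqref{eq:C-star C-sharp def} this partition is exactly what distinguishes $(\mathscr{C}^{\mathfrak{h}})^{\star}$ from $(\mathscr{C}^{\mathfrak{h}})^{\sharp}$. By the construction \eqref{eq:Ch def}, $\mathscr{C}^{\mathfrak{h}}$ splits into the new cycles $\{\mathcal{V}_i^{\mathfrak{h}}\}$ and the surviving cycles from level $\mathfrak{h}-1$, so I will handle these two families separately; showing that the $\mathcal{V}_i^{\mathfrak{h}}$ all have depth $\ge \Gamma^{\star,\mathfrak{h}}$ while the others have depth $< \Gamma^{\star,\mathfrak{h}}$ yields both set identities simultaneously by complementation.

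The first family is immediate: Lemma \ref{lem:Vih cycle} gives $\Gamma^{\mathcal{V}_i^{\mathfrak{h}}} = \Gamma_i^{\mathfrak{h}}$, and \eqref{eq:Gamma-starh def} gives $\Gamma_i^{\mathfrak{h}} \ge \Gamma^{\star,\mathfrak{h}}$. For a surviving cycle $\mathcal{C} \in \mathscr{C}^{\sharp,\mathfrak{h}-1}$, the membership definition yields $\Gamma^{\mathcal{C}} < \Gamma^{\star,\mathfrak{h}-1}$, and Lemma \ref{lem:Gamma-star inc} then gives $\Gamma^{\mathcal{C}} < \Gamma^{\star,\mathfrak{h}}$.

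The main obstacle I anticipate is controlling the depth of a surviving cycle $\mathcal{C} \in \mathscr{C}_{{\rm tr}}^{\star,\mathfrak{h}-1}$, for which one only knows $\Gamma^{\mathcal{C}} \ge \Gamma^{\star,\mathfrak{h}-1}$ a priori. My key subclaim is that transience forces equality. Suppose for contradiction that $\Gamma^{\mathcal{C}} > \Gamma^{\star,\mathfrak{h}-1}$; then the hypothesis in the third clause of \eqref{eq:RC def} fails at level $\mathfrak{h}-1$, so $\mathfrak{R}^{\mathfrak{h}-1}(\mathcal{F}(\mathcal{C}), \,\cdot\,) \equiv 0$ and $\mathcal{F}(\mathcal{C})$ is absorbing for $\mathfrak{X}^{\mathfrak{h}-1}$. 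Since the trace chain \eqref{eq:R-Cstar def} inherits absorption from the ambient chain (the trace cannot leave an absorbing state, which is also directly visible from the formula, where every summand carries the factor $\mathfrak{R}^{\mathfrak{h}-1}(\mathcal{F}(\mathcal{C}),\eta)$), $\mathcal{F}(\mathcal{C})$ is a singleton recurrent class of $\mathfrak{X}^{\star,\mathfrak{h}-1}$, contradicting $\mathcal{F}(\mathcal{C}) \in \mathscr{P}_{{\rm tr}}^{\star,\mathfrak{h}-1}$. Hence $\Gamma^{\mathcal{C}} = \Gamma^{\star,\mathfrak{h}-1}$, and Lemma \ref{lem:Gamma-star inc} delivers $\Gamma^{\mathcal{C}} < \Gamma^{\star,\mathfrak{h}}$. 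Combined with the first family this completes the classification and proves both identities.
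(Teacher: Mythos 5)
Your proof is correct. The paper omits the proof of this lemma (Section~\ref{sec1} defers all proofs to the full version \cite{Kim24}), so I cannot compare directly, but your argument is complete and follows what the paper's setup plainly intends. In particular, your key subclaim --- that $\mathcal{C}\in\mathscr{C}_{{\rm tr}}^{\star,\mathfrak{h}-1}$ forces $\Gamma^{\mathcal{C}}=\Gamma^{\star,\mathfrak{h}-1}$ because a deeper cycle would make $\mathcal{F}(\mathcal{C})$ an absorbing (hence recurrent) state for the trace chain, contradicting transience --- is exactly the content of the paper's own remark after \eqref{eq:RC def} that ``among the cycles in $\mathscr{C}^{\star}$ only those with depth exactly $\Gamma^{\star}$ have positive rates.'' Combining that with Lemma~\ref{lem:Gamma-star inc} to push the bound from $\Gamma^{\star,\mathfrak{h}-1}$ to $\Gamma^{\star,\mathfrak{h}}$ is the right move, and the complementation at the end correctly yields both set identities from the two one-sided inclusions, since the two families in \eqref{eq:Ch def} partition $\mathscr{C}^{\mathfrak{h}}$ and so do $(\mathscr{C}^{\mathfrak{h}})^{\star}$ and $(\mathscr{C}^{\mathfrak{h}})^{\sharp}$.
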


By Lemmas \ref{lem:Vih cycle} and \ref{lem:Ch prop}, we have $\mathscr{P}^{(\mathscr{C}^{\mathfrak{h}})^{\star}}=\{\mathcal{P}_{i}^{\mathfrak{h}}:i\in[1,\nu_{\mathfrak{h}-1}]\}=\mathscr{P}^{\star,\mathfrak{h}}$.
\begin{definition}
\label{nota:abbr h}Abbreviate $\mathscr{P}^{\mathfrak{h}}:=\mathscr{P}^{\mathscr{C}^{\mathfrak{h}}}$,
$\mathscr{C}^{\star,\mathfrak{h}}:=(\mathscr{C}^{\mathfrak{h}})^{\star}$, $\mathscr{C}^{\sharp,\mathfrak{h}}:=(\mathscr{C}^{\mathfrak{h}})^{\sharp}$,\\
$\mathscr{P}^{\sharp,\mathfrak{h}}:=\mathscr{P}^{(\mathscr{C}^{\mathfrak{h}})^{\sharp}}$,
$\Delta^{\mathfrak{h}}:=\Delta^{\mathscr{C}^{\mathfrak{h}}}$, $\Omega^{\mathfrak{h}}:=\Omega^{\mathscr{C}^{\mathfrak{h}}}$,
$\mathfrak{X}^{\mathfrak{h}}(t):=\mathfrak{X}^{\mathscr{C}^{\mathfrak{h}}}(t)$,\\
$\mathfrak{R}^{\mathfrak{h}}(\cdot,\cdot):=\mathfrak{R}^{\mathscr{C}^{\mathfrak{h}}}(\cdot,\cdot)$,
$\mathfrak{X}^{\star,\mathfrak{h}}(t):=\mathfrak{X}^{(\mathscr{C}^{\mathfrak{h}})^{\star}}(t)$,
and $\mathfrak{R}^{\star,\mathfrak{h}}(\cdot,\cdot):=\mathfrak{R}^{(\mathscr{C}^{\mathfrak{h}})^{\star}}(\cdot,\cdot)$.
\end{definition}

The Markov chain $\{\mathfrak{X}^{\star,\mathfrak{h}}(t)\}_{t\ge0}$ decomposes
$\mathscr{P}^{\star,\mathfrak{h}}$ into
\begin{equation}
\mathscr{P}^{\star,\mathfrak{h}}=\mathscr{P}_{1}^{\star,\mathfrak{h}}\cup\cdots\cup\mathscr{P}_{\nu_{\mathfrak{h}}}^{\star,\mathfrak{h}}\cup\mathscr{P}_{{\rm tr}}^{\star,\mathfrak{h}},\label{eq:P-starh dec}
\end{equation}
with $\nu_{\mathfrak{h}}$ irreducible components and a transient collection.
Accordingly, we obtain
\begin{equation}
\mathscr{C}^{\star,\mathfrak{h}}=\mathscr{C}_{1}^{\star,\mathfrak{h}}\cup\cdots\cup\mathscr{C}_{\nu_{\mathfrak{h}}}^{\star,\mathfrak{h}}\cup\mathscr{C}_{{\rm tr}}^{\star,\mathfrak{h}},\label{eq:C-starh dec}
\end{equation}
where $\mathscr{C}_{m}^{\star,\mathfrak{h}}:=\{\mathcal{V}_{i}^{\mathfrak{h}}\in\mathscr{C}^{\star,\mathfrak{h}}:\mathcal{P}_{i}^{\mathfrak{h}}\in\mathscr{P}_{m}^{\star,\mathfrak{h}}\}$
for $m\in\{1,\dots,\nu_{\mathfrak{h}},{\rm tr}\}$.

Finally, we may repeat the same inductive procedure provided that
$\nu_{\mathfrak{h}}\ge2$.
According to the construction, the number of irreducible components
decreases strictly:
\begin{theorem}
\label{thm:nu dec}For all $\mathfrak{h}\ge1$, it holds that $\nu_{\mathfrak{h}}<\nu_{\mathfrak{h}-1}$.
\end{theorem}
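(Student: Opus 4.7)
The plan is to prove the strict inequality via a counting argument: at level $\mathfrak{h}$ the set $\mathscr{P}^{\star,\mathfrak{h}}$ has exactly $\nu_{\mathfrak{h}-1}$ elements, and the decomposition \eqref{eq:P-starh dec} partitions them into $\nu_{\mathfrak{h}}$ recurrent classes plus a transient set. I would show that at least one element of $\mathscr{P}^{\star,\mathfrak{h}}$ is either transient or lies in a recurrent class of size at least $2$, which forces $\nu_{\mathfrak{h}-1} \ge \nu_{\mathfrak{h}} + 1$. The argument treats $\mathfrak{h}=1$ and $\mathfrak{h}\ge 2$ uniformly, since Lemmas \ref{lem:Vi1 cycle}, \ref{lem:Vih cycle} and \ref{lem:Ch prop} yield the same structural description of $\mathscr{C}^{\star,\mathfrak{h}}$ and $\mathscr{P}^{\star,\mathfrak{h}}$ at every level.

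The first step is to produce the key witness. By \eqref{eq:Gamma-starh def} (or \eqref{eq:Gamma-star1 def} at level $1$) there exists $i_0 \in [1,\nu_{\mathfrak{h}-1}]$ realizing the minimum, i.e.\ $\Gamma_{i_0}^{\mathfrak{h}} = \Gamma^{\star,\mathfrak{h}}$. By Lemma \ref{lem:Vih cycle}, the cycle $\mathcal{V}_{i_0}^{\mathfrak{h}} \in \mathscr{C}^{\star,\mathfrak{h}}$ has depth exactly $\Gamma^{\star,\mathfrak{h}}$, so in view of the third line of \eqref{eq:RC def} the element $\mathcal{F}(\mathcal{V}_{i_0}^{\mathfrak{h}}) = \mathcal{P}_{i_0}^{\mathfrak{h}}$ has strictly positive outgoing rate in $\mathfrak{R}^{\mathfrak{h}}$ toward $\partial^{\star}\mathcal{V}_{i_0}^{\mathfrak{h}}$. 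Passing from the induced chain $\mathfrak{X}^{\mathfrak{h}}$ to its trace $\mathfrak{X}^{\star,\mathfrak{h}}$ on $\mathscr{P}^{\star,\mathfrak{h}}$ (cf.\ \eqref{eq:R-Cstar def}) preserves this: starting from $\mathcal{P}_{i_0}^{\mathfrak{h}}$ the chain eventually hits $\mathscr{P}^{\star,\mathfrak{h}}$ at some $\mathcal{P}_{j}^{\mathfrak{h}} \ne \mathcal{P}_{i_0}^{\mathfrak{h}}$ with positive probability, so $\mathcal{P}_{i_0}^{\mathfrak{h}}$ is not absorbing in $\mathfrak{X}^{\star,\mathfrak{h}}$.

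From this non-absorbing property the counting follows immediately. In a finite-state Markov chain, an element which is not absorbing is either transient or belongs to a recurrent class strictly containing it. Thus either $\mathcal{P}_{i_0}^{\mathfrak{h}} \in \mathscr{P}_{{\rm tr}}^{\star,\mathfrak{h}}$, giving $|\mathscr{P}_{{\rm tr}}^{\star,\mathfrak{h}}| \ge 1$, or $\mathcal{P}_{i_0}^{\mathfrak{h}} \in \mathscr{P}_{m_0}^{\star,\mathfrak{h}}$ with $|\mathscr{P}_{m_0}^{\star,\mathfrak{h}}| \ge 2$. Summing cardinalities in the disjoint decomposition \eqref{eq:P-starh dec} yields
\begin{equation*}
\nu_{\mathfrak{h}-1} \;=\; |\mathscr{P}^{\star,\mathfrak{h}}| \;=\; \sum_{m=1}^{\nu_{\mathfrak{h}}} |\mathscr{P}_{m}^{\star,\mathfrak{h}}| + |\mathscr{P}_{{\rm tr}}^{\star,\mathfrak{h}}| \;\ge\; \nu_{\mathfrak{h}} + 1,
\end{equation*}
which gives the desired strict inequality.

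The only point that requires a bit of attention is the passage from the generator $\mathfrak{R}^{\mathfrak{h}}$ on $\Omega^{\mathfrak{h}}$ to the trace rate $\mathfrak{R}^{\star,\mathfrak{h}}$ on $\mathscr{P}^{\star,\mathfrak{h}}$: one must check that the positive outgoing rate at $\mathcal{P}_{i_0}^{\mathfrak{h}}$ in $\mathfrak{R}^{\mathfrak{h}}$ actually translates into the trace chain eventually leaving $\mathcal{P}_{i_0}^{\mathfrak{h}}$ to reach some other point of $\mathscr{P}^{\star,\mathfrak{h}}$. This is where Lemma \ref{lem:Ch prop} is used: the cycles in $\mathscr{C}^{\sharp,\mathfrak{h}}$ plus the free configurations in $\Delta^{\mathfrak{h}}$ form a finite absorbing-free region for the chain restricted to the complement of $\mathscr{P}^{\star,\mathfrak{h}}$, so the trace indeed reaches $\mathscr{P}^{\star,\mathfrak{h}} \setminus \{\mathcal{P}_{i_0}^{\mathfrak{h}}\}$ with positive probability. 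This verification is the one piece requiring a little care; the rest of the proof is the clean counting displayed above.
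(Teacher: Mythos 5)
The overall architecture of your argument is sound and, I believe, matches the intended proof strategy: since $|\mathscr{P}^{\star,\mathfrak{h}}| = \nu_{\mathfrak{h}-1}$ and the decomposition \eqref{eq:P-starh dec} always gives $\nu_{\mathfrak{h}-1} \ge \nu_{\mathfrak{h}}$, it suffices to exhibit one element of $\mathscr{P}^{\star,\mathfrak{h}}$ that is either transient or in a recurrent class of size $\ge 2$, and the natural candidate is $\mathcal{P}_{i_0}^{\mathfrak{h}}$ with $\Gamma_{i_0}^{\mathfrak{h}} = \Gamma^{\star,\mathfrak{h}}$. The counting step at the end is also clean and correct.

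However, the pivotal claim—that $\mathcal{P}_{i_0}^{\mathfrak{h}}$ is not absorbing under $\mathfrak{X}^{\star,\mathfrak{h}}$—is not actually established by what you write. Having positive outgoing rate in $\mathfrak{R}^{\mathfrak{h}}$ means $\mathfrak{X}^{\mathfrak{h}}$ leaves $\mathcal{P}_{i_0}^{\mathfrak{h}}$ to some $\eta\in\partial^{\star}\mathcal{V}_{i_0}^{\mathfrak{h}}$, and your "absorbing-free region" observation (even granting it, which itself needs that no stable plateau survives inside $\Delta^{\mathfrak{h}}\cup\mathscr{C}^{\sharp,\mathfrak{h}}$) only shows that the excursion eventually returns to $\mathscr{P}^{\star,\mathfrak{h}}$. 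Neither of these rules out the possibility that every excursion from $\mathcal{P}_{i_0}^{\mathfrak{h}}$ returns to $\mathcal{P}_{i_0}^{\mathfrak{h}}$ itself, in which case $\mathfrak{R}^{\star,\mathfrak{h}}(\mathcal{P}_{i_0}^{\mathfrak{h}},\cdot)\equiv 0$ by \eqref{eq:R-Cstar def} and $\{\mathcal{P}_{i_0}^{\mathfrak{h}}\}$ is a singleton recurrent class, contributing nothing to the strict inequality.

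What is actually required is that ${\bf P}_{\eta}^{\mathscr{C}^{\mathfrak{h}}}[\mathcal{T}_{\breve{\mathcal{P}}_{i_0}^{\mathfrak{h}}} = \mathcal{T}_{\mathscr{P}^{\star,\mathfrak{h}}}] > 0$ for some $\eta \in \partial^{\star}\mathcal{V}_{i_0}^{\mathfrak{h}}$, and this needs a genuine argument. The natural route is to take an optimal path realizing $\Phi(\mathcal{P}_{i_0}^{\mathfrak{h}}, \breve{\mathcal{P}}_{i_0}^{\mathfrak{h}}) = \mathbb{H}(\mathcal{P}_{i_0}^{\mathfrak{h}}) + \Gamma^{\star,\mathfrak{h}}$, pick its last exit $\eta$ from $\mathcal{V}_{i_0}^{\mathfrak{h}}$ (necessarily through $\partial^{\star}\mathcal{V}_{i_0}^{\mathfrak{h}}$), and then verify the chain $\mathfrak{X}^{\mathfrak{h}}$ can reproduce (a modification of) the remainder of this path. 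But this verification is not automatic: \eqref{eq:RC def} allows only non-increasing free jumps in $\Delta^{\mathfrak{h}}$ and forces every exit from a shallow cycle to occur through $\partial^{\star}\mathcal{C}$, which may not be where the optimal path leaves $\mathcal{C}$; so one must show that after each such forced rerouting there is still a path of height $\le \mathbb{H}(\mathcal{P}_{i_0}^{\mathfrak{h}}) + \Gamma^{\star,\mathfrak{h}}$ to $\breve{\mathcal{P}}_{i_0}^{\mathfrak{h}}$ and that the procedure terminates. This is the real content of the step, and it is exactly what is missing.
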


\begin{figure}[t]\centering
{\begin{tikzpicture}[scale=0.7]
\draw (0,3) sin (0.5,0);
\draw (0.5,0) cos (0.75,0.5); \draw (0.75,0.5) sin (1,1);
\draw (1,1) cos (1.25,0.5); \draw (1.25,0.5) sin (1.5,0);
\draw (1.5,0) cos (1.75,1); \draw (1.75,1) sin (2,2);
\draw (2,2) cos (2.25,1); \draw (2.25,1) sin (2.5,0);
\draw (2.5,0) cos (2.75,1.5); \draw (2.75,1.5) sin (3,3);
\draw (3,3) cos (3.25,2.5); \draw (3.25,2.5) sin (3.5,2);
\draw (3.5,2) cos (3.75,2.5); \draw (3.75,2.5) sin (4,3);
\draw (4,3) cos (4.25,2); \draw (4.25,2) sin (4.5,1);
\draw (4.5,1) cos (4.75,1.5); \draw (4.75,1.5) sin (5,2);
\draw (5,2) cos (5.25,1.5); \draw (5.25,1.5) sin (5.5,1);
\draw (5.5,1) cos (5.75,2); \draw (5.75,2) sin (6,3);
\draw (6,3) cos (6.25,1.5); \draw (6.25,1.5) sin (6.5,0);
\draw (6.5,0) cos (7,3);

\foreach \i in {0.5,1.5,2.5,6.5} {
\draw[very thick] (\i-0.1,0)--(\i+0.1,0); }
\foreach \i in {4.5,5.5} {
\draw[very thick] (\i-0.1,1)--(\i+0.1,1); }
\foreach \i in {3.5} {
\draw[very thick] (\i-0.1,2)--(\i+0.1,2); }

\draw (0.5,0) node[below]{$\bm{\mathcal{P}_1^1}$};
\draw (1.5,0) node[below]{$\bm{\mathcal{P}_2^1}$};
\draw (2.5,0) node[below]{$\bm{\mathcal{P}_3^1}$};
\draw (3.5,2) node[below]{$\mathcal{P}_4^1$};
\draw (4.5,1) node[below]{$\bm{\mathcal{P}_5^1}$};
\draw (5.5,1) node[below]{$\bm{\mathcal{P}_6^1}$};
\draw (6.5,0) node[below]{$\bm{\mathcal{P}_7^1}$};

\draw[red,<-] (0.5,0.75) sin (1,1.25); \draw[red,<-] (1.5,0.75) sin (1,1.25);
\draw[red,<-] (2.5,2.75) sin (3,3.25); \draw[red] (3.5,2.75) sin (3,3.25);
\draw[red] (3.5,2.75) sin (4,3.25); \draw[red,<-] (4.5,2.75) sin (4,3.25);
\draw[red,<-] (4.5,1.75) sin (5,2.25); \draw[red,<-] (5.5,1.75) sin (5,2.25);
\end{tikzpicture}
\hspace{15mm}
\begin{tikzpicture}[scale=0.7]
\draw (0,3) sin (0.5,0);
\draw (0.5,0) cos (0.75,0.5); \draw (0.75,0.5) sin (1,1);
\draw (1,1) cos (1.25,0.5); \draw (1.25,0.5) sin (1.5,0);
\draw (1.5,0) cos (1.75,1); \draw (1.75,1) sin (2,2);
\draw (2,2) cos (2.25,1); \draw (2.25,1) sin (2.5,0);
\draw (2.5,0) cos (2.75,1.5); \draw (2.75,1.5) sin (3,3);
\draw (3,3) cos (3.25,2.5); \draw (3.25,2.5) sin (3.5,2);
\draw (3.5,2) cos (3.75,2.5); \draw (3.75,2.5) sin (4,3);
\draw (4,3) cos (4.25,2); \draw (4.25,2) sin (4.5,1);
\draw (4.5,1) cos (4.75,1.5); \draw (4.75,1.5) sin (5,2);
\draw (5,2) cos (5.25,1.5); \draw (5.25,1.5) sin (5.5,1);
\draw (5.5,1) cos (5.75,2); \draw (5.75,2) sin (6,3);
\draw (6,3) cos (6.25,1.5); \draw (6.25,1.5) sin (6.5,0);
\draw (6.5,0) cos (7,3);

\draw[very thick] (0.4,0)--(1.6,0);
\draw[very thick] (2.4,0)--(2.6,0);
\draw[very thick] (4.4,1)--(5.6,1);
\draw[very thick] (6.4,0)--(6.6,0);

\draw (1,0) node[below]{$\bm{\mathcal{P}_1^2}$};
\draw (2.5,0) node[below]{$\bm{\mathcal{P}_2^2}$};
\draw (5,1) node[below]{$\mathcal{P}_3^2$};
\draw (6.5,0) node[below]{$\bm{\mathcal{P}_4^2}$};

\draw[red,<-] (1,1.75) sin (1.75,2.25); \draw[red,<-] (2.5,1.75) sin (1.75,2.25);
\draw[red,<-] (2.5,2.75) sin (3.75,3.25); \draw[red] (5,2.75) sin (3.75,3.25);
\draw[red] (5,2.75) sin (5.75,3.25); \draw[red,<-] (6.5,2.75) sin (5.75,3.25);
\end{tikzpicture}\\
\begin{tikzpicture}[scale=0.7]
\draw (0,3) sin (0.5,0);
\draw (0.5,0) cos (0.75,0.5); \draw (0.75,0.5) sin (1,1);
\draw (1,1) cos (1.25,0.5); \draw (1.25,0.5) sin (1.5,0);
\draw (1.5,0) cos (1.75,1); \draw (1.75,1) sin (2,2);
\draw (2,2) cos (2.25,1); \draw (2.25,1) sin (2.5,0);
\draw (2.5,0) cos (2.75,1.5); \draw (2.75,1.5) sin (3,3);
\draw (3,3) cos (3.25,2.5); \draw (3.25,2.5) sin (3.5,2);
\draw (3.5,2) cos (3.75,2.5); \draw (3.75,2.5) sin (4,3);
\draw (4,3) cos (4.25,2); \draw (4.25,2) sin (4.5,1);
\draw (4.5,1) cos (4.75,1.5); \draw (4.75,1.5) sin (5,2);
\draw (5,2) cos (5.25,1.5); \draw (5.25,1.5) sin (5.5,1);
\draw (5.5,1) cos (5.75,2); \draw (5.75,2) sin (6,3);
\draw (6,3) cos (6.25,1.5); \draw (6.25,1.5) sin (6.5,0);
\draw (6.5,0) cos (7,3);

\draw[very thick] (0.4,0)--(2.6,0);
\draw[very thick] (6.4,0)--(6.6,0);

\draw (1.5,0) node[below]{$\bm{\mathcal{P}_1^3}$};
\draw (6.5,0) node[below]{$\bm{\mathcal{P}_2^3}$};

\draw[red,<-] (1.5,2.75) sin (4,3.75); \draw[red,<-] (6.5,2.75) sin (4,3.75);
\end{tikzpicture}}\caption{\label{fig1}Example of a hierarchical decomposition of stable plateaux
in $\mathscr{P}^{1}$ with $\mathfrak{m}=3$. At each level $h\in[1,3]$,
bold-faced elements are recurrent and the rest are transient with
respect to $\{\mathfrak{X}^{\star,h}(t)\}_{t\ge0}$. At level $1$,
we have $\mathscr{P}_{1}^{\star,1}=\{\mathcal{P}_{1}^{1},\mathcal{P}_{2}^{1}\}$,
$\mathscr{P}_{2}^{\star,1}=\{\mathcal{P}_{3}^{1}\}$, $\mathscr{P}_{3}^{\star,1}=\{\mathcal{P}_{5}^{1},\mathcal{P}_{6}^{1}\}$,
$\mathscr{P}_{4}^{\star,1}=\{\mathcal{P}_{7}^{1}\}$ and $\mathscr{P}_{{\rm tr}}^{\star,1}=\{\mathcal{P}_{4}^{1}\}$.
At level $2$, we have $\mathscr{P}_{1}^{\star,2}=\{\mathcal{P}_{1}^{2},\mathcal{P}_{2}^{2}\}$,
$\mathscr{P}_{2}^{\star,2}=\{\mathcal{P}_{4}^{2}\}$ and $\mathscr{P}_{{\rm tr}}^{\star,2}=\{\mathcal{P}_{3}^{2}\}$.
Finally, at level $\mathfrak{m}=3$, we have $\mathscr{P}^{\star,3}=\mathscr{P}_{1}^{\star,3}=\{\mathcal{P}_{1}^{3},\mathcal{P}_{2}^{3}\}$
which is exactly composed of the ground states.}
\end{figure}
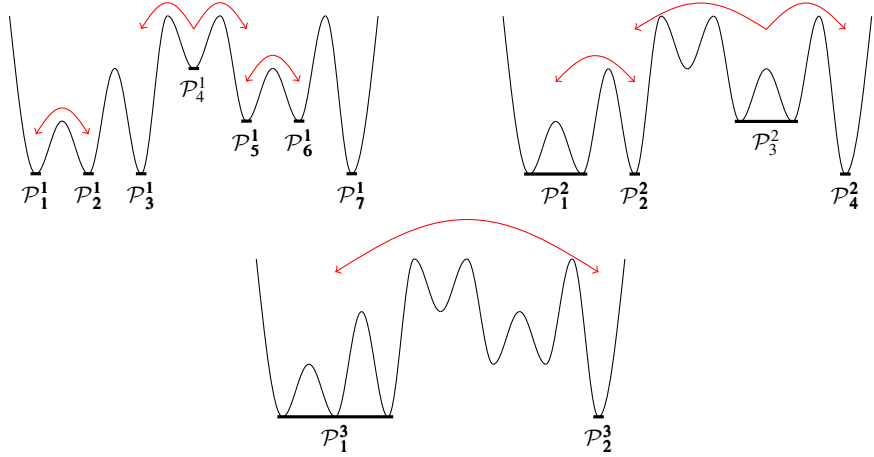

By Theorem \ref{thm:nu dec}, there exists a terminal integer $\mathfrak{m}$
such that $\nu_{\mathfrak{m}-1}>1$ and $\nu_{\mathfrak{m}}=1$. In
turn, the sequence $\mathscr{P}^{1},\mathscr{P}^{2},\dots,\mathscr{P}^{\mathfrak{m}}$
constitutes the full hierarchical decomposition of the stable plateaux
in $\overline{\Omega}$. See Figure \ref{fig1} for an illustration.

In addition, the ground states in $\mathcal{S}$ are always contained
in the recurrent collection: for each $\mathfrak{h}\ge1$, define
\begin{equation}
\mathscr{P}_{{\rm rec}}^{\star,\mathfrak{h}}:=\mathscr{P}_{1}^{\star,\mathfrak{h}}\cup\cdots\cup\mathscr{P}_{\nu_{\mathfrak{h}}}^{\star,\mathfrak{h}}\quad\text{and}\quad\mathscr{C}_{{\rm rec}}^{\star,\mathfrak{h}}:=\mathscr{C}_{1}^{\star,\mathfrak{h}}\cup\cdots\cup\mathscr{C}_{\nu_{\mathfrak{h}}}^{\star,\mathfrak{h}}.\label{eq:P-starh C-starh rec}
\end{equation}

\begin{theorem}[Ground states are always recurrent]
\label{thm:ground states rec}For all $\bm{s}\in\mathcal{S}$ and
$\mathfrak{h}\ge1$, there exists $\mathcal{P}_{i}^{\mathfrak{h}}\in\mathscr{P}_{{\rm rec}}^{\star,\mathfrak{h}}$
such that $\bm{s}\in\mathcal{P}_{i}^{\mathfrak{h}}$.
\end{theorem}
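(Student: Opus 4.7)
The plan is to proceed by induction on the level $\mathfrak{h}\ge 1$. For the base case $\mathfrak{h}=1$, every ground state $\bm{s}\in\mathcal{S}$ lies in the connected component of $\{\eta:\mathbb{H}(\eta)=\mathbb{H}(\bm{s})\}$ containing $\bm{s}$, and this component is a stable plateau $\mathcal{P}\in\mathscr{P}^{1}$ because any neighbor outside it has strictly larger energy by the minimality of $\mathbb{H}(\bm{s})$. For the inductive step, if $\bm{s}\in\mathcal{P}_{j}^{\mathfrak{h}-1}\in\mathscr{P}_{\rm rec}^{\star,\mathfrak{h}-1}$, then $\mathcal{P}_{j}^{\mathfrak{h}-1}\in\mathscr{P}_{m}^{\star,\mathfrak{h}-1}$ for some $m$, and \eqref{eq:P-starh def} immediately gives $\bm{s}\in\mathcal{P}_{m}^{\mathfrak{h}}\in\mathscr{P}^{\star,\mathfrak{h}}$. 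In either case it only remains to promote this containment from $\mathscr{P}^{\star,\mathfrak{h}}$ to $\mathscr{P}_{\rm rec}^{\star,\mathfrak{h}}$.

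The central ingredient is the following \emph{energy-closure lemma}: if $\mathcal{P}_{i}^{\mathfrak{h}}\in\mathscr{P}^{\star,\mathfrak{h}}$ contains a ground state, so that $\mathbb{H}(\mathcal{P}_{i}^{\mathfrak{h}})=:E=\mathbb{H}(\mathcal{S})$, and $\mathfrak{R}^{\star,\mathfrak{h}}(\mathcal{P}_{i}^{\mathfrak{h}},\mathcal{P}_{\ell}^{\mathfrak{h}})>0$, then $\mathcal{P}_{\ell}^{\mathfrak{h}}\subseteq\mathcal{S}$ and $\Gamma_{\ell}^{\mathfrak{h}}=\Gamma^{\star,\mathfrak{h}}$. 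Positivity of the outgoing rate forces $\Gamma_{i}^{\mathfrak{h}}=\Gamma^{\star,\mathfrak{h}}$, and the witnessing $\mathfrak{X}^{\mathfrak{h}}$-trajectory exits $\mathcal{V}_{i}^{\mathfrak{h}}$ at energy $E+\Gamma^{\star,\mathfrak{h}}$ and never rises above this threshold: the direct moves in \eqref{eq:RC def} are non-increasing in energy, and each entry into (respectively exit from) a cycle $\mathcal{C}\in\mathscr{C}^{\sharp,\mathfrak{h}}$ lands at energy bounded by the entry point, using the cycle inequality $\min_{\partial\mathcal{C}}\mathbb{H}>\max_{\mathcal{C}}\mathbb{H}$. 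Hence $\Phi(\mathcal{P}_{i}^{\mathfrak{h}},\mathcal{P}_{\ell}^{\mathfrak{h}})\le E+\Gamma^{\star,\mathfrak{h}}$, and the symmetry of $\Phi$ together with \eqref{eq:Gammaih def} yields
\begin{equation*}
\mathbb{H}(\mathcal{P}_{\ell}^{\mathfrak{h}})+\Gamma_{\ell}^{\mathfrak{h}}=\Phi(\mathcal{P}_{\ell}^{\mathfrak{h}},\breve{\mathcal{P}}_{\ell}^{\mathfrak{h}})\le\Phi(\mathcal{P}_{\ell}^{\mathfrak{h}},\mathcal{P}_{i}^{\mathfrak{h}})=\Phi(\mathcal{P}_{i}^{\mathfrak{h}},\mathcal{P}_{\ell}^{\mathfrak{h}})\le E+\Gamma^{\star,\mathfrak{h}},
\end{equation*}
which together with $\Gamma_{\ell}^{\mathfrak{h}}\ge\Gamma^{\star,\mathfrak{h}}$ forces the claimed conclusion.

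Consequently $\mathcal{S}^{\star}:=\{\mathcal{P}\in\mathscr{P}^{\star,\mathfrak{h}}:\mathcal{P}\subseteq\mathcal{S}\}$ is invariant under $\mathfrak{X}^{\star,\mathfrak{h}}$, no non-absorbing element of $\mathcal{S}^{\star}$ can transition to an absorbing one (any target has $\Gamma_\ell^\mathfrak{h}=\Gamma^{\star,\mathfrak{h}}$), and every absorbing element of $\mathcal{S}^{\star}$ is a singleton recurrent class. The non-absorbing part of $\mathcal{S}^{\star}$ is then a finite closed set on which $\mathfrak{X}^{\star,\mathfrak{h}}$ inherits reversibility from the underlying Metropolis chain $\eta_\beta$ (reversible with respect to $\mu_\beta$ by \eqref{eq:det bal}) via the trace construction \eqref{eq:R-Cstar def}; since a reversible finite Markov chain has no transient states, this gives $\mathcal{P}_{m}^{\mathfrak{h}}\in\mathscr{P}_{\rm rec}^{\star,\mathfrak{h}}$ and closes the induction. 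The main obstacle I anticipate is the careful bookkeeping for the energy-closure lemma, as the $\mathfrak{X}^{\mathfrak{h}}$-trajectory may traverse an arbitrarily long chain of nested $\mathscr{C}^{\sharp,\mathfrak{h}}$-cycles before reaching $\mathcal{P}_{\ell}^{\mathfrak{h}}$, and the uniform energy bound at every entry and exit point is what allows the argument to close.
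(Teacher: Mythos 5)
Your overall architecture — induction on $\mathfrak{h}$, passage from $\mathscr{P}_{\rm rec}^{\star,\mathfrak{h}-1}$ into $\mathscr{P}^{\star,\mathfrak{h}}$ via \eqref{eq:P-starh def}, and the energy-closure lemma as the engine — is sound, and your energy-closure lemma is both correct and the right key insight. However, the final step has a genuine gap.

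You close the argument by asserting that $\mathfrak{X}^{\star,\mathfrak{h}}$ restricted to the non-absorbing ground-energy plateaux \emph{inherits reversibility} from $\eta_\beta$, and then invoke ``a reversible finite Markov chain has no transient states.'' This does not hold as stated: the auxiliary chain $\mathfrak{X}^{\mathfrak{h}}$ is manifestly \emph{not} reversible, since by \eqref{eq:RC def} a strictly-downhill move $\eta\to\xi$ in $\Delta^{\mathfrak{h}}$ has rate $1$ while the reverse rate is $0$, so detailed balance forces any stationary measure to vanish on $\Delta^{\mathfrak{h}}$. The chain $\mathfrak{X}^{\star,\mathfrak{h}}$ is then a trace of a non-reversible chain whose rates \eqref{eq:R-Cstar def} are built from hitting probabilities, and reversibility of such a trace is not something one can read off from the reversibility of $\eta_\beta$; the limit $\beta\to\infty$ and the contraction of cycles both sit in the way. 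What you actually need — and what your own ingredients already deliver — is the weaker statement that accessibility under $\mathfrak{X}^{\star,\mathfrak{h}}$ is \emph{symmetric} on $\mathcal{S}^{\star}_{\rm na}:=\{\mathcal{P}\in\mathscr{P}^{\star,\mathfrak{h}}:\mathcal{P}\subseteq\mathcal{S},\ \Gamma_i^{\mathfrak{h}}=\Gamma^{\star,\mathfrak{h}}\}$. To see this, push your monotonicity observation one step further: since every $\mathfrak{X}^{\mathfrak{h}}$-move is energy-nonincreasing away from cycle-exits and every cycle-exit returns to an energy no greater than the entry energy, once the trajectory drops strictly below $E+\Gamma^{\star,\mathfrak{h}}$ in $\Delta^{\mathfrak{h}}$ it can never climb back, whereas the final arrival at $\mathcal{P}_\ell^{\mathfrak{h}}$ is through $\partial^{\star}\mathcal{V}_\ell^{\mathfrak{h}}$ at energy exactly $E+\Gamma^{\star,\mathfrak{h}}$. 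Hence every visit to $\Delta^{\mathfrak{h}}$ along a rate-realizing trajectory sits at energy exactly $E+\Gamma^{\star,\mathfrak{h}}$, every in-$\Delta^{\mathfrak{h}}$ move is energy-preserving (hence reversible in $\mathfrak{X}^{\mathfrak{h}}$), and every cycle $\mathcal{C}\in\mathscr{C}^{\sharp,\mathfrak{h}}$ traversed has $\min_{\partial\mathcal{C}}\mathbb{H}=E+\Gamma^{\star,\mathfrak{h}}$, so its entry point also lies in $\partial^{\star}\mathcal{C}$ and the entry/exit pair is reversible. Reversing the trajectory step-by-step then shows $\mathfrak{R}^{\star,\mathfrak{h}}(\mathcal{P}_\ell^{\mathfrak{h}},\mathcal{P}_i^{\mathfrak{h}})>0$. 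With symmetric accessibility, the communicating classes inside the closed set $\mathcal{S}^{\star}_{\rm na}$ coincide with the accessibility classes and are therefore themselves closed, so every state in $\mathcal{S}^{\star}_{\rm na}$ is recurrent; together with the observation that absorbing elements of $\mathcal{S}^{\star}$ are trivially recurrent, this closes the induction without any appeal to reversibility of $\mathfrak{X}^{\star,\mathfrak{h}}$.
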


In particular, by Theorem \ref{thm:ground states rec} with $\mathfrak{h}=\mathfrak{m}$
and Lemma \ref{lem:Pih energy}, the unique irreducible collection
$\mathscr{P}_{1}^{\star,\mathfrak{m}}$ of the terminal level $\mathfrak{m}$
consists of exactly all ground states in $\mathcal{S}$.

\subsection{\label{sec2.2}Metastable hierarchy of tunneling transitions between
stable plateaux}

For each $\mathfrak{h}\in[1,\mathfrak{m}]$, define
\begin{equation}
\mathcal{V}^{\star,\mathfrak{h}}:=\bigcup_{i\in[1,\nu_{\mathfrak{h}-1}]}\mathcal{V}_{i}^{\mathfrak{h}}.\label{eq:V-starh def}
\end{equation}
Define a projection function $\Psi^{\mathfrak{h}}:\mathcal{V}^{\star,\mathfrak{h}}\to\mathscr{P}^{\star,\mathfrak{h}}$
as
\begin{equation}
\Psi^{\mathfrak{h}}(\eta):=\mathcal{P}_{i}^{\mathfrak{h}}\quad\text{for each}\quad\eta\in\mathcal{V}_{i}^{\mathfrak{h}}\quad\text{and}\quad i\in[1,\nu_{\mathfrak{h}-1}].\label{eq:Psih def}
\end{equation}
Consider the trace process $\{\eta_{\beta}^{\mathfrak{h}}(t)\}_{t\ge0}$ of the
original process in $\mathcal{V}^{\star,\mathfrak{h}}$. Then, define the \emph{$\mathfrak{h}$-th
order process} $\{X_{\beta}^{\mathfrak{h}}(t)\}_{t\ge0}$ in $\mathscr{P}^{\star,\mathfrak{h}}$
as 
\[
X_{\beta}^{\mathfrak{h}}(t):=\Psi^{\mathfrak{h}}(\eta_{\beta}^{\mathfrak{h}}(e^{\Gamma^{\star,\mathfrak{h}}\beta}t))\quad\text{for}\quad t\ge0.
\]
We are ready to state our main result.
\begin{theorem}[Hierarchical tunneling metastable transitions]
\label{thm:main}For each $\mathfrak{h}\in[1,\mathfrak{m}]$, the following
statements are valid.
\begin{enumerate}
\item The $\mathfrak{h}$-th order process $\{X_{\beta}^{\mathfrak{h}}(t)\}_{t\ge0}$ in $\mathscr{P}^{\star,\mathfrak{h}}$
converges to $\{\mathfrak{X}^{\star,\mathfrak{h}}(t)\}_{t\ge0}$.
\item The original process spends negligible time outside $\mathcal{V}^{\star,\mathfrak{h}}$
in the time scale $e^{\Gamma^{\star,\mathfrak{h}}\beta}$:
\[
\lim_{\beta\to\infty}\mathbb{E}_{\eta}\Big[\int_{0}^{T}{\bf 1}\{\eta_{\beta}(e^{\Gamma^{\star,\mathfrak{h}}\beta}t)\notin\mathcal{V}^{\star,\mathfrak{h}}\}{\rm d}t\Big]=0\quad\text{for all}\quad T>0\quad\text{and}\quad\eta\in\mathcal{V}^{\star,\mathfrak{h}}.
\]
\end{enumerate}
\end{theorem}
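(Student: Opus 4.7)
The plan is to establish Theorem \ref{thm:main} by combining classical Freidlin--Wentzell cycle estimates with the trace-process framework of Beltr\'an--Landim \cite{BL10,BL12b,BL15a}. I will first prove assertion (2), and then use it to set up the proof of (1), since (2) plays the role of the replacement lemma that justifies working with the trace process on $\mathcal{V}^{\star,\mathfrak{h}}$.

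For (2), decompose $\overline{\Omega}\setminus\mathcal{V}^{\star,\mathfrak{h}}$ into two kinds of configurations. By Lemma \ref{lem:Ch prop}, any $\eta$ in this complement lies either in a cycle $\mathcal{C}\in\mathscr{C}^{\sharp,\mathfrak{h}}$ of depth $\Gamma^{\mathcal{C}}<\Gamma^{\star,\mathfrak{h}}$, or in $\Delta^{\mathfrak{h}}$, in which case $\eta$ possesses a neighbor of equal or lower energy and hence has a holding rate of order $1$. Applying the classical cycle estimates (cf.\ \cite[Theorems 6.23 and 6.30]{OV}) together with the strong Markov property gives $\mathbb{E}_\eta[\mathcal{T}_{\mathcal{V}^{\star,\mathfrak{h}}}]=O(e^{(\Gamma^{\star,\mathfrak{h}}-\delta)\beta})$ for some $\delta>0$ uniform in $\eta\in\overline{\Omega}\setminus\mathcal{V}^{\star,\mathfrak{h}}$. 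Iterating this bound inside $[0,Te^{\Gamma^{\star,\mathfrak{h}}\beta}]$ via the strong Markov property, and noting that after every excursion from the valleys the process spends a typical sojourn of order $e^{\Gamma^{\star,\mathfrak{h}}\beta}$ inside $\mathcal{V}^{\star,\mathfrak{h}}$, yields the desired vanishing of the expected occupation time.

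For (1), assertion (2) ensures that the trace process $\{\eta_\beta^{\mathfrak{h}}(t)\}_{t\ge 0}$ on $\mathcal{V}^{\star,\mathfrak{h}}$ remains close to the original process on the scale $e^{\Gamma^{\star,\mathfrak{h}}\beta}$. Inside each valley $\mathcal{V}_i^{\mathfrak{h}}$ of depth $\Gamma_i^{\mathfrak{h}}\ge\Gamma^{\star,\mathfrak{h}}$, local equilibration to the quasi-stationary distribution, which concentrates uniformly on $\mathcal{P}_i^{\mathfrak{h}}$, occurs in time $o(e^{\Gamma^{\star,\mathfrak{h}}\beta})$, so the projection $X_\beta^{\mathfrak{h}}(t)=\Psi^{\mathfrak{h}}(\eta_\beta^{\mathfrak{h}}(e^{\Gamma^{\star,\mathfrak{h}}\beta}t))$ is a well-defined coarse-grained process on $\mathscr{P}^{\star,\mathfrak{h}}$. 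The key computation is to derive the asymptotic transition rates of $X_\beta^{\mathfrak{h}}$ and match them with $\mathfrak{R}^{\star,\mathfrak{h}}$: by the capacity/Dirichlet-form representation of effective jump rates, and by the fact that an optimal exit saddle from $\mathcal{V}_i^{\mathfrak{h}}$ has energy $\mathbb{H}(\mathcal{P}_i^{\mathfrak{h}})+\Gamma^{\star,\mathfrak{h}}$ exactly when $\Gamma_i^{\mathfrak{h}}=\Gamma^{\star,\mathfrak{h}}$, the factor $|\mathcal{F}(\mathcal{C})|^{-1}$ in \eqref{eq:RC def} arises from the uniform quasi-stationary distribution on $\mathcal{F}(\mathcal{C})$, while the counting factor $|\{\zeta\in\mathcal{C}:\eta\sim\zeta\}|$ records the crossings across the optimal saddle. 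A tightness argument in the Skorohod topology---via, e.g., Aldous' criterion applied to the jump times of $X_\beta^{\mathfrak{h}}$---together with convergence of finite-dimensional distributions then identifies the limit as $\{\mathfrak{X}^{\star,\mathfrak{h}}(t)\}_{t\ge 0}$.

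The main obstacle lies in realizing the target rate \eqref{eq:R-Cstar def}, which is defined via hitting probabilities of the auxiliary contracted chain $\{\mathfrak{X}^{\mathfrak{h}}(t)\}_{t\ge 0}$, from the original Metropolis dynamics. This forces one to track the fate of the process through $\Delta^{\mathfrak{h}}$ and the shallow cycles of $\mathscr{C}^{\sharp,\mathfrak{h}}$ between successive visits to $\mathscr{P}^{\star,\mathfrak{h}}$, and to show that the resulting $\beta$-dependent hitting probabilities converge precisely to those of $\{\mathfrak{X}^{\mathfrak{h}}(t)\}_{t\ge 0}$. A clean way to handle this is through the resolvent approach of \cite{LMS resolvent}, which characterizes the metastable behavior via the flatness of solutions to an appropriate resolvent equation associated with the generator $L_\beta$ and sidesteps ad hoc trajectorial analysis; cycles of depth strictly larger than $\Gamma^{\star,\mathfrak{h}}$, which are absorbing for $\mathfrak{X}^{\star,\mathfrak{h}}$, are then treated separately and trivially on this time scale.
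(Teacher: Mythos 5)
The paper itself contains no proof of Theorem~\ref{thm:main}; it explicitly defers to the full version~\cite{Kim24}. Consequently your proposal cannot be directly compared to ``the paper's own proof,'' and what follows assesses the internal soundness of your outline against the definitions set up in Section~\ref{sec2}.

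Your overall strategy---proving~(2) first via Freidlin--Wentzell cycle estimates and then using it as a replacement-type lemma to control the trace process for~(1)---is sensible and consistent with the martingale/trace machinery of~\cite{BL10,BL11,BL15a} that the paper invokes. However, the proof of~(2) as written contains a genuine gap. You decompose $\overline{\Omega}\setminus\mathcal{V}^{\star,\mathfrak{h}}$ into shallow cycles of $\mathscr{C}^{\sharp,\mathfrak{h}}$ and the set $\Delta^{\mathfrak{h}}$, and claim $\mathbb{E}_\eta[\mathcal{T}_{\mathcal{V}^{\star,\mathfrak{h}}}]=O(e^{(\Gamma^{\star,\mathfrak{h}}-\delta)\beta})$ by applying the cycle exit estimates piecewise. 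But the quantity governing $\mathbb{E}_\eta[\mathcal{T}_{\mathcal{V}^{\star,\mathfrak{h}}}]$ is the depth of the largest cycle \emph{contained in the complement} $\overline{\Omega}\setminus\mathcal{V}^{\star,\mathfrak{h}}$, not merely the depths of the designated cycles in $\mathscr{C}^{\sharp,\mathfrak{h}}$. Lemma~\ref{lem:Ch prop} controls only the members of the specific collection $\mathscr{C}^{\mathfrak{h}}$; it does not rule out a larger ``meta-cycle'' whose interior unites several shallow cycles and configurations of $\Delta^{\mathfrak{h}}$ and whose depth meets or exceeds $\Gamma^{\star,\mathfrak{h}}$, in which case the process could dwell there for a time comparable to $e^{\Gamma^{\star,\mathfrak{h}}\beta}$ without entering $\mathcal{V}^{\star,\mathfrak{h}}$. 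What the argument actually needs is the following structural claim, which must be established from the maximality built into the definitions~\eqref{eq:Vi1 def}, \eqref{eq:Vih def}, and \eqref{eq:Ch def}: every cycle contained in $\overline{\Omega}\setminus\mathcal{V}^{\star,\mathfrak{h}}$ has depth strictly less than $\Gamma^{\star,\mathfrak{h}}$. Without it, the bound is asserted rather than proved.

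For~(1), the sketch correctly identifies the main obstacle---reproducing the rate~\eqref{eq:R-Cstar def}, defined via the auxiliary contracted chain, from the original Metropolis dynamics---but then leaves the crux unresolved. The claim that the quasi-stationary distribution inside $\mathcal{V}_i^{\mathfrak{h}}$ concentrates uniformly on $\mathcal{P}_i^{\mathfrak{h}}$ in time $o(e^{\Gamma^{\star,\mathfrak{h}}\beta})$ is the key local-equilibration input and requires justification (e.g.\ spectral gap estimates inside each valley, or verification of the Beltr\'an--Landim conditions \textbf{H0}--\textbf{H1}); you state it as a fact. Similarly, matching the asymptotic transition rates of $X_\beta^{\mathfrak{h}}$ to $\mathfrak{R}^{\star,\mathfrak{h}}$ via capacities demands tracking the hitting probabilities of the original chain through the shallow cycles and $\Delta^{\mathfrak{h}}$ and showing they converge to those of the contracted chain $\{\mathfrak{X}^{\mathfrak{h}}(t)\}$; the suggested detour via the resolvent approach of~\cite{LMS resolvent} is a reasonable alternative but is mentioned, not executed. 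As it stands, the proposal is a plausible program for the proof rather than a proof.

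A minor point: you describe the treatment of absorbing states as ``separate and trivial.'' The absorbing elements of $\mathscr{P}^{\star,\mathfrak{h}}$ (bottoms of cycles with depth strictly exceeding $\Gamma^{\star,\mathfrak{h}}$) remain part of the state space of the limit chain, and one still needs the exit-time estimate $\mathbb{E}[\text{exit time}]\asymp e^{\Gamma\beta}$ with $\Gamma>\Gamma^{\star,\mathfrak{h}}$ to conclude that the rescaled process stays put; this is standard but not quite trivial.
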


According to Theorem \ref{thm:main}, \eqref{eq:P-star P-sharp def}
and \eqref{eq:RC def}, at each level $\mathfrak{h}\in[1,\mathfrak{m}]$, there
exist tunneling transitions between the stable plateaux of depth at
least $\Gamma^{\star,\mathfrak{h}}$ in the time scale $e^{\beta\Gamma^{\star,\mathfrak{h}}}$,
where those with depth strictly greater than $\Gamma^{\star,\mathfrak{h}}$ are
absorbing. This is consistent with \eqref{eq:Gamma-starh def} in
that $\Gamma^{\star,\mathfrak{h}}$ is the minimum energy barrier between the
stable plateaux at level $\mathfrak{h}$.

\section{\label{sec3}Applications: Glauber/Kawasaki Dynamics on the Ising Model}

In this section, we apply the general results presented in Section
\ref{sec2} to concrete examples. We fix the Ising model as our configuration system
and consider four different types of dynamics defined thereon.

\subsection*{Ising model}

For fixed positive integers $K$ and $L$, we consider a two-dimensional
periodic square lattice $\Lambda=(V,E)$ of side lengths $K$ and
$L$, i.e.,
\[
V:=\mathbb{T}_{K}\times\mathbb{T}_{L}=\{0,1,\dots,K-1\}\times\{0,1,\dots,L-1\},
\]
and $E$ is the set of unordered nearest-neighbor bonds in $V$. Assume that $K \ge L$.
The corresonding configuration space $\Omega$ is defined as
\[
\Omega:=\{\eta=(\eta(x))_{x\in V}\in\{-1,+1\}^{V} \}.
\]
In the spin system point of view, the values $\pm1$ correspond to two different types of spins
that exist in the system; whereas, in the particle (or gas) system point of view, the value $+1$ (resp. $-1$)
indicates the occupied (resp. vacant) state of each site. In this regard, in the literature,
the possible spin values are sometimes selected as $0$ (vacant) or $1$ (occupied), instead of $\pm1$
as in the present article.

For each $\eta\in\Omega$, we define the Hamiltonian $\mathbb{H}(\eta)$
as
\begin{equation}
\mathbb{H}(\eta):=- \frac12 \sum_{\{x,y\}\in E}\eta(x)\eta(y) - \frac{h}2 \sum_{x \in V} {\bf 1} \{ \eta(x) = +1 \} .\label{eq:H def}
\end{equation}
Here, $h \ge 0$ denotes the external field towards spin $+1$.
According to this Hamiltonian, we assign a Gibbs measure
to $\Omega$ as in \eqref{eq:Gibbs def}.

\subsection*{Glauber dynamics}

The \emph{Glauber dynamics} in $\Omega$ is the continuous-time Markov chain $\{\eta_\beta^{\rm Gl}(t)\}_{t\ge0}$ whose transition rate function $r_\beta^{\rm Gl} : \Omega\times \Omega \to [0,\infty)$ is defined as
\begin{equation}\label{eq:Gl}
r_\beta^{\rm Gl}(\eta,\xi) := \begin{cases}
e^{-\beta \max\{ \mathbb{H}(\eta^x)-\mathbb{H}(x),0\}} & \text{if} \quad \xi = \eta^x \quad \text{and} \quad x \in V,\\
0 & \text{otherwise},
\end{cases}
\end{equation}
where $\eta^x$ is a new configuration obtained from $\eta$ by flipping the spin at site $x$:
\[
\eta^x(x)=-\eta(x)\quad \text{and} \quad \eta^x(y)=\eta(y) \quad \text{for} \quad y \ne x.
\]
It is routine to check that the Glauber transition rate is of form \eqref{eq:rbeta def}, and that the resulting graph structure on $\Omega$ is connected.

\subsection{\label{sec3.1}Case 1: Glauber dynamics, positive external field}

In this subsection, we assume that $h>0$ in \eqref{eq:H def} and consider the Glauber dynamics defined by \eqref{eq:Gl}. This case was one of the first models that exhibit nontrivial and complicated structure of metastability phenomenon, studied from the 80s \cite{NS91,NS92}. The notation and results summarized in this subsection are mainly taken from \cite{BL11}.

To avoid technical issues, we assume that $0<h<2$, $2/h \notin \mathbb{N}$, and $L> (\lfloor 2/h \rfloor +1)^2 +1$, where $\lfloor \cdot \rfloor$ is the usual integer floor function. See \cite{MNOS04} for the degenerate case $2/h \in \mathbb{N}$.

First, we characterize the stable plateaux. Recall from Remark \ref{rem:Omega Omega-bar} that in Section \ref{sec3.1}, we are able to fully characterize all stable plateaux in the system, not only the ones in the essential subset $\overline\Omega$.

From \eqref{eq:H def}, the set of ground states is characterized as $\mathcal{S} = \{\boxplus\}$. It turns out that every stable plateau in the system is a singleton, as we now start to classify them.

We start with the collection of stable plateaux, $\mathscr{P}^1 = \mathscr{P}^{\star,1}$ (cf. \eqref{eq:P1 def}).
We let $\eta \in \mathscr{P}^{\star,1}$ if and only if the following two conditions hold simultaneously:
\begin{itemize}
\item every $+1$ cluster $\mathcal{C}$ of $\eta$ satisfies either
	\begin{itemize}
	\item $\mathcal C$ is a rectangle with all side lengths at least $2$;
	\item $\mathcal C$ is a horizontal strip (wrapping around the vertical boundary) whose width is not equal to $L-1$;
	\item $\mathcal C$ is a vertical strip whose width is not equal to $K-1$;
	\item $\mathcal C = V$;
	\end{itemize}
\item every two $+1$ clusters of $\eta$ are at least graph distance $3$ apart from each other.
\end{itemize}

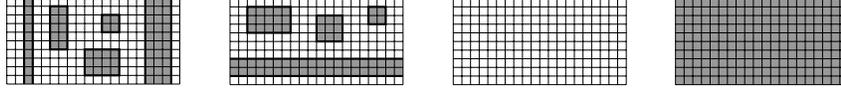
\begin{figure}[t]\centering
\begin{tikzpicture}[scale=0.115]
\fill[white] (-4,0) rectangle (16,10);

\fill[black!40!white] (-2,0) rectangle (-1,10);
\fill[black!40!white] (1,4) rectangle (3,9);
\fill[black!40!white] (5,1) rectangle (9,4);
\fill[black!40!white] (7,6) rectangle (9,8);
\fill[black!40!white] (12,0) rectangle (15,10);

\draw[very thin] (-4,0) grid (16,10);
\draw[thick] (-2,0)--(-2,10); \draw[thick] (-1,0)--(-1,10);
\draw[thick] (12,0)--(12,10); \draw[thick] (15,0)--(15,10);
\draw[thick] (1,4) rectangle (3,9);
\draw[thick] (5,1) rectangle (9,4);
\draw[thick] (7,6) rectangle (9,8);
\end{tikzpicture}
\hspace{5mm}
\begin{tikzpicture}[scale=0.115]
\fill[white] (-4,0) rectangle (16,10);

\fill[black!40!white] (-4,1) rectangle (16,3);
\fill[black!40!white] (-2,6) rectangle (3,9);
\fill[black!40!white] (6,5) rectangle (9,8);
\fill[black!40!white] (12,7) rectangle (14,9);

\draw[very thin] (-4,0) grid (16,10);
\draw[thick] (-4,1)--(16,1); \draw[thick] (-4,3)--(16,3);
\draw[thick] (-2,6) rectangle (3,9);
\draw[thick] (6,5) rectangle (9,8);
\draw[thick] (12,7) rectangle (14,9);
\end{tikzpicture}
\hspace{5mm}
\begin{tikzpicture}[scale=0.115]
\fill[white] (-4,0) rectangle (16,10);

\draw[very thin] (-4,0) grid (16,10);
\end{tikzpicture}
\hspace{5mm}
\begin{tikzpicture}[scale=0.115]
\fill[white] (-4,0) rectangle (16,10);

\fill[black!40!white] (-4,0) rectangle (16,10);

\draw[very thin] (-4,0) grid (16,10);
\end{tikzpicture}\caption{\label{fig2}Examples of configurations in $\mathscr{P}^1$ if $h>0$. We illustrate each
configuration in the dual lattice in the sense that the gray (resp.
white) faces indicate the $+1$ (resp. $-1$) spins.}
\end{figure}

See Figure \ref{fig2} for illustrations. Next, we declare nested subsets $\mathscr{P}^{\star,k}$ for $k \in [2,n_0+1]$, i.e., $\mathscr{P}^{\star,1} \supset \mathscr{P}^{\star,2} \supset \cdots \supset \mathscr{P}^{\star,n_0+1}$. Define $n_0 := \lfloor 2/h \rfloor$. From now on, by rectangle/strip we automatically refer to $+1$ rectangle/strip.
\begin{itemize}
\item For each $k \in [2,n_0]$, $\eta \in \mathscr{P}^{\star,k}$ if and only if $\eta$ has neither a rectangle whose smaller side length is less than or equal to $k$, nor a strip of width $1$.
\item Let $\mathscr{P}^{\star,n_0+1} := \{ \boxminus, \boxplus \}$.
\end{itemize}
Thus, $\mathfrak{m} = n_0+1$.
The corresponding depths (cf. \eqref{eq:Gamma-star1 def} and \eqref{eq:Gamma-starh def}) are given as
\begin{equation}\label{eq:TS}
\Gamma^{\star,k} := kh \quad \text{for} \quad k \in [1,n_0-1], \quad \Gamma^{\star,n_0} := 2-h,\quad \text{and} \quad \Gamma^{\star,n_0+1} := \Gamma_h,
\end{equation}
where $\Gamma_h := 4(n_0+1) - h(n_0(n_0+1)+1)$.

The intuition behind all this is as follows. The first time scale $e^{\Gamma^{\star,1}\beta}$ corresponds to the initial depth, which is $h$, on which a rectangle with its smaller side length $2$ may shrink by flipping its two edge $+1$ spins to $-1$ spins and become a smaller rectangle, or a strip with width $1$ may vanish. Then, for $k \in [2,n_0-1]$, on time scale $e^{\Gamma^{\star,k}\beta}$, a rectangle with its smaller side length $k+1$ may shrink by flipping its $k+1$ edge $+1$ spins, which has depth $kh$. Next, on time scale $e^{\Gamma^{\star,n_0}\beta}$, a rectangle with all side length bigger than $n_0$ may grow by adding $+1$ spins on one of its four sides, or a strip of width at least $2$ may grow by adding $+1$ spins on one of its two sides, both of which have depth $2-h$.
Finally, on the last time scale $e^{\Gamma^{\star,n_0+1}\beta}$, we observe the final metastable transition from $\boxminus$ to $\boxplus$ by overcoming/following a series of paths described above, thereby get absorbed at the unique stable state $\boxplus$.

The main convergence result can be summarized as follows.

\begin{theorem}[Metastable hierarchy, Case 1]\label{thm:case1}
For each $k\in[1,n_0+1]$, there exists a limiting
Markov chain $\{\mathfrak{X}^{\star,k}(t)\}_{t\ge0}$ in $\mathscr{P}^{\star,k}$
such that the $e^{\beta\Gamma^{\star,k}}$-accelerated Glauber dynamics
converges to $\{\mathfrak{X}^{\star,k}(t)\}_{t\ge0}$ in the sense
of Theorem \ref{thm:main}.
\end{theorem}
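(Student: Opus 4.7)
The plan is to verify that the Glauber dynamics on the Ising model under the present assumptions fits the abstract framework of Section \ref{sec2}: once one checks that the inductive construction of Section \ref{sec2.1} produces exactly the nested chain $\mathscr{P}^{\star,1}\supset\cdots\supset\mathscr{P}^{\star,n_0+1}$ with depths \eqref{eq:TS} and terminal level $\mathfrak{m}=n_0+1$, Theorem \ref{thm:main} yields the convergence claim at every level $k\in[1,n_0+1]$. The first task is to identify $\mathscr{P}^1$ with the set described by the two bullet points: an elementary energy-change formula for a single spin flip, expressed in terms of the number of agreeing/disagreeing nearest neighbors of $x$ and the external field $h$, combined with a case analysis over local site types (interior, edge, corner of a $+1$-cluster, isolated site, etc.), shows that the listed configurations are strict local minima of $\mathbb{H}$, and hence singleton stable plateaux. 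Conversely, any violation of the rules---a $+1$-cluster that is not a rectangle/strip of the prescribed shape, or two clusters at graph distance $\le 2$---admits an explicit spin flip of nonpositive cost; the distance-$3$ threshold is sharp because at distance $2$ an isolated $-1$ site between two clusters can be switched to $+1$ with nonpositive change in $\mathbb{H}$.

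Next, the hierarchy is built inductively. At each level, one identifies the mechanism of minimum-barrier escape from a plateau in $\mathscr{P}^{\star,k}$ and matches the recurrent collection produced by the construction with the claimed $\mathscr{P}^{\star,k+1}$. At level $1$, the minimum depth $\Gamma^{\star,1}=h$ is attained precisely by plateaux carrying a rectangle of shorter side $2$ or a strip of width $1$, and is realized by eroding such a component: a corner flip (barrier $h$) followed by the now-favorable adjacent flips. For $k\in[2,n_0-1]$, the dominant mechanism is the erosion of a shorter-side-$(k+1)$ rectangle, with barrier $kh$. At level $n_0$ the mechanism switches to the \emph{protuberance} growth of a large rectangle or strip of width $\ge 2$ by a single adjacent spin, with barrier $2-h$. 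Finally, at the terminal level $n_0+1$ the only remaining tunneling is the global nucleation $\boxminus\to\boxplus$ through the classical critical droplet, of height $\Gamma_h$ as computed in \cite{NS91}. Each step requires checking that $\mathscr{P}^{\star,k}$ coincides with the output of the construction of Section \ref{sec2.1}, consistently with the strict decrease of $\nu_k$ from Theorem \ref{thm:nu dec}, ultimately yielding $\nu_{n_0+1}=1$ and $\mathscr{P}^{\star,n_0+1}=\{\boxminus,\boxplus\}$.

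The hard part is the matching lower bound on the communication heights at every level: exhibiting a path with the claimed barrier is straightforward, but showing that no cheaper path exists requires a careful isoperimetric/combinatorial control over all possible intermediate droplet configurations. The transitional level $k=n_0$ is the most delicate, since the assumption $2/h\notin\mathbb{N}$ is precisely what guarantees the strict separation $(n_0-1)h<2-h<n_0h$, and hence a non-degenerate hierarchical decomposition at the crossover between erosion and growth. These sharp depth estimates are the main technical content of \cite{NS91,NS92,BL11}; once they are in place, the inductive construction of Section \ref{sec2.1} applies verbatim, and Theorem \ref{thm:main} delivers the convergence of $\{X_\beta^k(t)\}_{t\ge0}$ to $\{\mathfrak{X}^{\star,k}(t)\}_{t\ge0}$ for each $k\in[1,n_0+1]$.
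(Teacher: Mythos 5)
The paper does not actually give a proof of Theorem \ref{thm:case1}: it states right after the theorem that the limit chains are one-step absorbing and refers to \cite[Section 3]{BL11} for their jump rates, and the introduction explicitly says that proofs are omitted and deferred to \cite{Kim24} and the cited references (\cite{NS91,NS92,BL11}). Your sketch follows precisely the strategy the paper's intuition paragraph lays out---single-flip case analysis to identify $\mathscr{P}^1$, then the erosion mechanism with depth $kh$ at levels $k\le n_0-1$, the protuberance-growth mechanism with depth $2-h$ at level $n_0$, and the global nucleation with depth $\Gamma_h$ at level $n_0+1$, with the delicate step being the matching lower bounds on communication heights coming from \cite{NS91,NS92,BL11} and the non-degeneracy condition $2/h\notin\mathbb{N}$---and then invokes Theorem \ref{thm:main}; this is essentially the same approach the paper takes.
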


Each dynamics $\{\mathfrak{X}^{\star,k}(t)\}_{t\ge0}$, $k \in [1,n_0+1]$, is a one-step absorbing Markov chain in the sense that every element is either absorbing or gets absorbed by a single jump.
For the exact definition of the jump rates, we refer the readers to \cite[Section 3]{BL11}.

\subsection{\label{sec3.2}Case 2: Glauber dynamics, zero external field}

In Section \ref{sec3.2}, we assume the contrary that $h=0$ in \eqref{eq:H def}. The first and (maybe) the easiest notable difference would be that, in this case, we have full symmetry between the two spins $\pm1$, thus both $\boxminus$ and $\boxplus$ become the most stable configurations; in other words, $\mathcal{S} = \{ \boxminus , \boxplus \}$.
It is quite remarkable in that the situation here is far more complicated, since the energy landscape becomes vast and flat between the stable configurations $\boxminus$ and $\boxplus$. This complexity and degeneracy prevented the experts from analyzing this zero external field case for more than 20 years after the first results for the positive external field case. The results in Section \ref{sec3.2} are mainly due to \cite{KS25}.

Define $\Gamma_L := 2L+2$. In the zero external field case, the barrier $\overline\Phi$ (cf. \eqref{eq:Phi-bar def}) between $\boxminus$ and $\boxplus$ becomes (cf. \cite[Theorem 2.1]{NZ19})
\[
\overline\Phi = \Phi (\boxminus , \boxplus) = \mathbb{H} (\boxminus) + \Gamma_L,
\]
thus we have $\overline\Omega = \{ \eta \in \Omega : \Phi (\mathcal{S},\eta) \le \mathbb{H} (\boxminus) + \Gamma_L \}$.

It turns out that inside the essential region $\overline\Omega$, there are only two time scales; $e^{2\beta}$ and $e^{\Gamma_L\beta}$. For simplicity, in this subsection we assume that $K>L$. We briefly remark the $K=L$ case at the end of Section \ref{sec3.2}.

For each $k \in \mathbb{T}_K$ and $v \in [0,K]$, denote by $\xi_{k,v} \in \Omega$ the configuration such that $\xi_{k,v}(x_1,x_2)=+1$ if $x_1 \in \{k, \dots , k+v-1 \} \subseteq \mathbb{T}_K$ and $\xi_{k,v}(x_1,x_2)=-1$ if $x_1 \in \{ k+v , \dots , k-1 \} \in \mathbb{T}_K$. In words, $\xi_{k,v}$ is the vertical strip configuration of width $v$ that starts from the horizontal coordinate $k \in \mathbb{T}_K$ (see Figure \ref{fig3} left). Note that $\xi_{k,0}=\boxminus$ and $\xi_{k,K}=\boxplus$ for any $k \in \mathbb{T}_K$.

\begin{figure}[t]\centering
\begin{tikzpicture}[scale=0.115]
\fill[white] (-4,0) rectangle (16,10);

\fill[black!40!white] (-2,0) rectangle (9,10);

\draw[very thin] (-4,0) grid (16,10);
\draw[thick] (-2,0)--(-2,10); \draw[thick] (9,0)--(9,10);
\end{tikzpicture}
\hspace{5mm}
\begin{tikzpicture}[scale=0.115]
\fill[white] (-4,0) rectangle (16,10);

\fill[black!40!white] (3,0) rectangle (8,10);

\draw[very thin] (-4,0) grid (16,10);
\draw[thick] (3,0)--(3,10); \draw[thick] (8,0)--(8,10);
\end{tikzpicture}
\hspace{5mm}
\begin{tikzpicture}[scale=0.115]
\fill[white] (-4,0) rectangle (16,10);

\fill[black!40!white] (4,0) rectangle (6,4);
\fill[black!40!white] (4,5) rectangle (6,10);

\draw[very thin] (-4,0) grid (16,10);
\draw[thick] (4,0)--(4,4)--(6,4)--(6,0);
\draw[thick] (4,10)--(4,5)--(6,5)--(6,10);
\end{tikzpicture}
\hspace{5mm}
\begin{tikzpicture}[scale=0.115]
\fill[white] (-4,0) rectangle (16,10);

\fill[black!40!white] (4,0) rectangle (6,1);
\fill[black!40!white] (4,2) rectangle (5,4);
\fill[black!40!white] (5,5) rectangle (6,6);
\fill[black!40!white] (5,7) rectangle (6,9);
\fill[black!40!white] (4,9) rectangle (6,10);

\draw[very thin] (-4,0) grid (16,10);
\draw[thick] (4,0)--(4,1)--(6,1)--(6,0);
\draw[thick] (4,2) rectangle (5,4);
\draw[thick] (5,5) rectangle (6,6);
\draw[thick] (4,10)--(4,9)--(5,9)--(5,7)--(6,7)--(6,10);
\end{tikzpicture}\caption{\label{fig3}Configurations $\xi_{2,11}$ and $\xi_{7,5}$ which belong to $\mathscr{P}^1$ (left), and examples of atypical saddle configurations (right) if $h=0$.}
\end{figure}
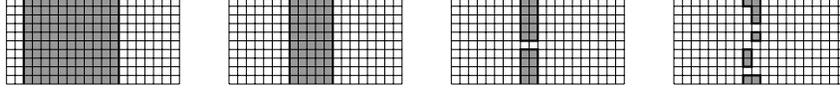

Then, the collection $\mathscr{P}^1$ of stable plateaux is characterized as follows.

\begin{theorem}[Structure of the stable plateaux, Case 2]
\begin{enumerate}
\item We have $\mathscr{P}^{\star,1} = \{ \boxminus , \boxplus \} \cup \{ \xi_{k,v} : k \in \mathbb{T}_K, \ v \in [2,K-2] \}$ and $\Gamma^{\star,1} = 2$.
\item At the next level, $\mathscr{P}^{\star,2} = \{ \boxminus , \boxplus \}$ and $\Gamma^{\star,2} = \Gamma_L = 2L+2$.
\end{enumerate}
\end{theorem}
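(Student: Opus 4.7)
The plan is to analyze the claim in two stages: first to identify the singleton stable plateaux via a local-minimum condition on $d_x(\eta)$, and then to compute the initial depths and the irreducible components at level $1$ in order to pass to level $2$. Since $h=0$, one has $\mathbb{H}(\eta^x) - \mathbb{H}(\eta) = 4 - 2d_x(\eta)$, where $d_x(\eta)$ denotes the number of neighbors of $x$ disagreeing with $\eta(x)$; hence a single flip strictly decreases, preserves, or strictly increases the energy according to whether $d_x(\eta) \ge 3$, $d_x(\eta) = 2$, or $d_x(\eta) \le 1$.

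The first step is to verify that each configuration in the asserted list is a singleton stable plateau by checking the condition $d_x \le 1$ site-by-site. For $\boxminus$ and $\boxplus$, trivially $d_x = 0$ everywhere. For $\xi_{k,v}$ with $v \in [2, K-2]$, the $\pm$ interface consists of exactly two vertical lines at horizontal coordinates $k$ and $k+v$, and every site has $d_x \le 1$ (equal to $1$ precisely on those lines); hence every neighbor of $\xi_{k,v}$ has energy $\mathbb{H}(\xi_{k,v}) + 2$, so $\{\xi_{k,v}\}$ satisfies Definition \ref{def:stab plat} and is a stable plateau.

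To rule out any other stable plateau in $\overline\Omega$, suppose $\eta \in \overline\Omega$ is not in the listed set. If $d_x(\eta) \ge 3$ at some site, then $\eta$ has a strictly lower-energy neighbor and cannot belong to any stable plateau. Otherwise $d_x(\eta) \le 2$ everywhere, and the key geometric observation is that configurations with $d_x \le 1$ everywhere on the torus $\mathbb{T}_K \times \mathbb{T}_L$ are precisely disjoint unions of parallel horizontal or parallel vertical strips (together with $\boxminus$ and $\boxplus$), since a corner-free $\pm$ interface must consist of straight loops winding around one of the two coordinate directions. Combined with the energy bound $\mathbb{H}(\eta) \le \mathbb{H}(\boxminus) + \Gamma_L = \mathbb{H}(\boxminus) + 2L+2$ coming from $\overline\Omega$, and the assumption $K > L$, this reduces the all-$d_x \le 1$ configurations in $\overline\Omega$ to exactly the listed ones. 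Hence $\eta$ must contain at least one site with $d_x(\eta) = 2$; performing the corresponding energy-preserving flip and iterating produces a chain of same-energy configurations inside the plateau of $\eta$, and a case analysis on how corners propagate under such flips (they either pair-annihilate into a site with $d \ge 3$, or smooth the interface into a listed strip, contradicting the assumption $\eta \ne \xi_{k,v}$) shows that this chain must eventually reach a configuration with some $d \ge 3$; therefore the plateau of $\eta$ admits an energy-decreasing flip and is not stable.

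It remains to compute $\Gamma^{\star,1}$ and pass to level $2$. From any $\xi_{k,v}$, the minimal exit path flips one boundary $-$ spin to $+$ (cost $+2$), then slides the new $+$ along the adjacent column by a succession of $d_x = 2$ flips at zero energy cost, and concludes with a $d_x = 3$ flip of the last remaining $-$ in the column (gain $-2$), landing on $\xi_{k,v+1}$ (or a horizontal translate thereof). This gives $\Gamma_i^1 = 2$ for every strip, while the initial depths of $\{\boxminus\}$ and $\{\boxplus\}$ both equal $\Gamma_L$ by \cite[Theorem 2.1]{NZ19}; hence $\Gamma^{\star,1} = 2$. In $\mathfrak{X}^{\star,1}$ the ground states $\boxminus$ and $\boxplus$ are therefore absorbing, while every strip is transient and eventually flows into $\boxminus$ or $\boxplus$ via the width-changing transitions just described; the irreducible components of level $1$ are $\mathscr{P}_1^{\star,1} = \{\boxminus\}$ and $\mathscr{P}_2^{\star,1} = \{\boxplus\}$, yielding $\mathscr{P}^{\star,2} = \{\boxminus, \boxplus\}$ with common depth $\Phi(\boxminus, \boxplus) - \mathbb{H}(\boxminus) = \Gamma_L = 2L+2$. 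The main obstacle is the third paragraph: the flat energy landscape near the critical perimeter permits long chains of energy-preserving flips between same-energy configurations, and the delicate ingredient from \cite{KS25} is a rigorous classification of plateau-connected components with all $d_x \le 2$ subject to the perimeter constraint, handling L-shapes, thin protrusions, and the multiple-cluster configurations that a priori might survive the reduction.
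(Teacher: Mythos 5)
The paper is a review article that defers the proof of this theorem entirely to \cite{KS25}, so there is no paper-internal proof to compare against directly; I will assess your argument on its own.

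Your general strategy---classify stable plateaux via the single-flip energy increment $\mathbb{H}(\eta^x)-\mathbb{H}(\eta)=4-2d_x(\eta)$, verify that the listed configurations are exactly those with $d_x\le 1$ everywhere subject to the $\overline\Omega$ constraint, compute the initial depths, and pass to the second level---is sound and consistent with the approach in \cite{KS25}. The identification of $\boxminus,\boxplus,\xi_{k,v}$ as singleton plateaux, the use of $K>L$ to eliminate horizontal strips and multiple disjoint strips via the perimeter bound $\mathbb{H}(\eta)-\mathbb{H}(\boxminus)\le\Gamma_L$, the explicit sliding path giving $\Gamma_i^1=2$ for each strip, and the level-$1$ absorption structure (strips transient, $\boxminus,\boxplus$ absorbing with $\Gamma^{\star,2}=\Gamma_L$) are all correct.

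Two points need care. First, your assertion that the initial depths of $\{\boxminus\}$ and $\{\boxplus\}$ equal $\Gamma_L$ invokes \cite[Theorem 2.1]{NZ19}, which gives $\Phi(\boxminus,\boxplus)=\mathbb{H}(\boxminus)+\Gamma_L$; but the relevant quantity in \eqref{eq:Gammai1 def} is $\Phi(\boxminus,\breve{\mathcal P}^1_{\boxminus})$, which requires additionally $\Phi(\boxminus,\xi_{k,v})\ge \mathbb{H}(\boxminus)+\Gamma_L$ for every strip. This does follow from the energy landscape analysis (any path from $\boxminus$ must pass a near-wrapping rectangle of perimeter $2L+2$ before reaching any configuration of strip type), but it is not literally the cited statement and should be recorded. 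Second, and more substantially, the third paragraph---ruling out non-singleton stable plateaux by iterating $d_x=2$ flips and arguing that ``corners propagate'' until a $d\ge 3$ site or a listed strip is reached---is the genuine content of the theorem and is left as a heuristic. As written, the argument does not exclude the a priori possibility of a plateau in which the $d=2$ flips cycle indefinitely among configurations all having $d_x\le 2$ everywhere; that such cycling cannot occur, especially near the critical perimeter $\Gamma_L$ where L-shapes, detached sticks, and disconnected protrusions arise (cf. Figure~\ref{fig3}, right), is precisely the delicate classification carried out in \cite[Sections~5--6]{KS25}. You flag this as the ``main obstacle,'' which is the right assessment, but the reader should understand that this step is where the actual proof lives; the rest is bookkeeping.
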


We briefly explain the detailed metastable transitions. Starting from $\xi_{k,v}$ for some $k\in \mathbb{T}_K$ and $v\in [3,L-3]$, following a path of depth $2$, we may either attach a stick on the left or right, or detach a stick from the left or right, by adding/removing spins $+1$ consecutively one by one, eventually arriving at either $\xi_{k',2}$ or $\xi_{k',K-2}$ for some $k'$. From these \emph{edge} configurations, we arrive at $\boxminus$ or $\boxplus$ which are absorbing, but the path is much more complicated; this is because other than the typical one-dimensional mechanism described in the last sentence, there also exist atypical transition paths involving weird saddle configurations, as illustrated in Figure \ref{fig3} right.

Then, on the second time scale $e^{\Gamma_L \beta}$, we observe a simple random walk between $\boxminus$ and $\boxplus$ where the jump rate solves a certain variational formula. We refer to the detail in \cite[Theorem 3.4]{KS25} and summarize the result below.

\begin{theorem}[Metastable hierarchy, Case 2]
For each $\mathfrak{h} = 1,2$, there exists a limiting
Markov chain $\{\mathfrak{X}^{\star,\mathfrak{h}}(t)\}_{t\ge0}$ in $\mathscr{P}^{\star,\mathfrak{h}}$
such that the $e^{\beta\Gamma^{\star,\mathfrak{h}}}$-accelerated Glauber dynamics
converges to $\{\mathfrak{X}^{\star,\mathfrak{h}}(t)\}_{t\ge0}$ in the sense
of Theorem \ref{thm:main}.
\end{theorem}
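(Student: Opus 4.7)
The plan is to reduce the theorem to an application of the general convergence result Theorem \ref{thm:main}. Once the hierarchy $\mathscr{P}^{\star,1},\mathscr{P}^{\star,2}$ and depths $\Gamma^{\star,1}=2$, $\Gamma^{\star,2}=\Gamma_L$ are rigorously justified, the existence of the limits $\{\mathfrak{X}^{\star,\mathfrak{h}}(t)\}_{t\ge 0}$ and the convergence of the $e^{\beta\Gamma^{\star,\mathfrak{h}}}$-accelerated Glauber dynamics to them follow automatically from the abstract framework of Section \ref{sec2.2}. Hence the bulk of the proof is the structural verification.

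First, at level $1$, I verify the characterization of $\mathscr{P}^{\star,1}$. For each strip $\xi_{k,v}$ with $v\in[2,K-2]$, a direct local energy calculation shows that any single spin flip strictly increases the Hamiltonian: edge sites of the strip are adjacent to two same-column neighbors of equal spin and thus flip at cost $+2$, while interior sites flip at cost $+4$. Hence $\xi_{k,v}$ is a singleton stable plateau. Conversely, every other candidate must be ruled out; in particular, any width-$1$ strip admits a zero-cost flip (a strip site has two $+$ neighbors and two $-$ neighbors, so flipping preserves the energy), and any configuration containing non-strip clusters fails Definition \ref{def:stab plat} by a similar local argument. The initial depth of each $\xi_{k,v}$ equals $2$: attaching a unit $+1$ square next to the strip costs $+2$, after which the bump can be extended column-wise at cost $0$ per additional flip until an adjacent strip $\xi_{k',v\pm 1}$ is reached. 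Since $\boxminus$ and $\boxplus$ have depth at least $\Gamma_L>2$, this yields $\Gamma^{\star,1}=2$.

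Second, at level $2$ I must show that all strips form a single transient class of $\mathfrak{X}^{\star,1}$, that $\boxminus,\boxplus$ are the only absorbing states, and that the resulting depth $\Gamma^{\star,2}$ equals $\Gamma_L=2L+2$. The first two assertions follow from the explicit attach-detach construction: the strips $\xi_{k,v}$ for $v\in[2,K-2]$ communicate among themselves through cost-$2$ paths, while an edge strip $\xi_{k,2}$ (resp.\ $\xi_{k,K-2}$) also reaches $\boxminus$ (resp.\ $\boxplus$) via a cost-$2$ path that first pinches the strip into width $1$ and then completes the annihilation through a sequence of non-positive cost flips. The genuinely hard input is the matching lower bound $\Phi(\boxminus,\boxplus)\ge \mathbb{H}(\boxminus)+\Gamma_L$, which is the content of \cite[Theorem 2.1]{NZ19} and the refined saddle analysis of \cite{KS25}. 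The main obstacle sits here: because $h=0$, the energy landscape between $\boxminus$ and $\boxplus$ is vastly degenerate, admitting many atypical saddle configurations (Figure \ref{fig3} right) that a priori might provide cheaper communication routes. One must classify all configurations at the critical energy level $\mathbb{H}(\boxminus)+\Gamma_L$ and run an isoperimetric-type argument showing that no path of smaller height connects the two ground states.

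Once the hierarchy with $\mathfrak{m}=2$ is in place, the conclusion of the theorem follows directly from Theorem \ref{thm:main} applied at each level. The level-$1$ limit chain $\mathfrak{X}^{\star,1}$ on $\mathscr{P}^{\star,1}$ encodes the strip attach/detach moves with $\boxminus,\boxplus$ absorbing; the level-$2$ limit chain $\mathfrak{X}^{\star,2}$ on $\{\boxminus,\boxplus\}$ is a symmetric two-state Markov jump whose rate is given by the trace formula \eqref{eq:R-Cstar def} and coincides with the capacity-variational expression of \cite[Theorem 3.4]{KS25}.
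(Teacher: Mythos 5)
Your proposal is correct and follows the approach the paper itself implies: verify the stable plateau hierarchy and the depths, then invoke the abstract Theorem~\ref{thm:main}. The paper is a review and explicitly defers the proof to \cite{KS25} (with the critical energy barrier from \cite{NZ19}), so there is no fully spelled-out internal proof to compare against, but your reduction and the local flip-cost computations at level $1$ are correct and match the intuition the paper sketches (attach/detach of a stick at cost $2$ for strips of intermediate width, absorbing transitions to $\boxminus,\boxplus$ at level $2$).

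One spot that is genuinely underargued as written is the sentence ``any configuration containing non-strip clusters fails Definition~\ref{def:stab plat} by a similar local argument.'' This is not a purely local verification: because $h=0$, a stable plateau is by Definition~\ref{def:stab plat} a connected \emph{set} of constant energy whose entire outer boundary lies strictly above, so ruling out candidates requires following all zero-cost flips out of a configuration (not just checking that one configuration admits a cost-$0$ move) and showing that every such equi-energy component within $\overline{\Omega}$ either is one of the listed strips or eventually touches a strictly lower configuration. Your width-$1$ strip example is fine since the zero-cost flips lead to a negative-cost flip, but the general case (e.g.\ configurations with a strip plus a small satellite droplet, or the ``weird'' saddle-adjacent shapes in Figure~\ref{fig3}) needs the global isoperimetric/combinatorial analysis that \cite{KS25} supplies; a ``similar local argument'' does not close this. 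Relatedly, the atypical saddle configurations mentioned in the paper are relevant not only to the level-$2$ lower bound for $\Phi(\boxminus,\boxplus)$ but also to computing the level-$1$ jump rates from edge strips $\xi_{k,2},\xi_{k,K-2}$ to the absorbing states via~\eqref{eq:R-Cstar def}; your simple pinching path shows the depth is $2$ but is not the only height-$2$ route, so the existence of the explicit limit chain still rests on the full classification in \cite{KS25}. With that caveat acknowledged, your reduction to Theorem~\ref{thm:main} is the right route and mirrors the paper.
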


\begin{remark}[Case $K=L$]
If $K=L$, then the overall structure gets doubled since the analogous horizontal strip configurations (which are $\xi_{k,v}$'s transposed in the diagonal direction) also attain the same energy level $\mathbb{H}(\boxminus)+\Gamma_L$. Nevertheless, the complexity of the structure remains the same since the vertical configurations and the horizontal configurations do not interact with each other inside the essential set $\overline\Omega$. In addition, the jump rate between $\boxminus$ and $\boxplus$ on the second time scale gets doubled since we have twice more options for the metastable transition trajectories. See \cite[Definition 6.9]{KS25} for the precise definition of this transposing procedure.
\end{remark}

\begin{remark}[Other models]
We conclude this subsection by noting that one can also consider generalized \emph{Potts}-type models, where both the positive/zero external field mechanisms may arise at the same time. The deepest metastable transitions are quite well understood at this point; see \cite{KS22} for the zero external field case, \cite{BGN22,BGN24} for partially non-degenerate case, \cite{Ahn24} for fully non-degenerate case, and \cite{BGK23} for general interaction constants. It would be interesting in these models to analyze all the remaining shallower-valley structures and thereby quantify the complete hierarchical structure of metastability.
\end{remark}

\subsection*{Kawasaki dynamics}

The \emph{Kawasaki dynamics}
in $\Omega$ is the continuous-time Markov chain $\{\eta_{\beta}^{\rm Ka}(t)\}_{t\ge0}$
whose transition rate function $r_{\beta}^{\rm Ka}:\Omega\times\Omega\to[0,\infty)$
is defined as
\begin{equation}
r_{\beta}^{\rm Ka}(\eta,\xi):=\begin{cases}
e^{-\beta\max\{\mathbb{H}(\eta^{x\leftrightarrow y})-\mathbb{H}(\eta),0\}} & \text{if}\quad\xi=\eta^{x\leftrightarrow y}\ne\eta\quad\text{and}\quad\{x,y\}\in E,\\
0 & \text{otherwise}.
\end{cases}\label{eq:r-beta-def}
\end{equation}
Here, $\eta^{x\leftrightarrow y}$ is obtained from $\eta$ by exchanging
the spins at $x$ and $y$:
\begin{equation}
\eta^{x\leftrightarrow y}(x)=\eta(y),\quad\eta^{x\leftrightarrow y}(y)=\eta(x),\quad\text{and}\quad\eta^{x\leftrightarrow y}(z)=\eta(z)\quad\text{for}\quad z\ne x,y.\label{eq:eta-xy def}
\end{equation}
According to the Kawasaki dynamics, each particle in the lattice jumps
independently to its vacant neighbor with rate $1$ if the jump does
not increase the energy and with exponentially small rate $e^{-\beta\Delta}$
if the jump increases the energy by $\Delta>0$.

Since the dynamics conserves the total number of each spin, we must restrict \emph{a priori} the number of $+1$ spins to obtain an ergodic system.
For a positive integer $\mathscr{N}$, the restricted configuration
space $\Omega_{\mathscr{N}}$ is defined as
\[
\Omega_{\mathscr{N}}:=\{\eta=(\eta(x))_{x\in V}\in\{-1,+1\}^{V}: |\{x \in V: \eta(x)=+1 \}| = \mathscr{N}\},
\]
such that the graph structure on $\Omega_{\mathscr{N}}$ now becomes connected.
In the particle system point of view, $\mathscr{N}$ indicates the total number of particles.

\subsection{\label{sec3.3}Case 3: Kawasaki dynamics, few particles}

In this subsection, mainly following \cite{BL15b}, assume that $\mathscr{N} < L^2/4$, i.e., the number of particles is bounded above.
Note that in the Hamiltonian formula \eqref{eq:H def}, the external field part does not play a role (since $\mathscr{N}$ is fixed) and we may use the following simplified form:
\[
\mathbb{H}(\eta) =- \frac12 \sum_{\{x,y\}\in E}\eta(x)\eta(y) .
\]
To illustrate the ideas, let $\mathscr{N} = n^2$ for some $n < L/2$.

For each $x \in V$, denote by $\eta^x$ the configuration that satisfies $\eta^x(y) = +1$ if and only if $y=x + (i,j)$ for some $i,j \in [0,n-1]$. Namely, $\eta^x$ is the configuration with a single $+1$ square with side length $n$ and starting from $x$. Then, define
\[
\Omega^0 = \{ \eta^x : x \in V \}.
\]
One can apply the usual isoperimetric inequalities to prove that $\Omega^0$ is the minimizer set of $\mathbb{H}$, i.e., $\mathcal{S} = \Omega_0$. Let $\mathbb{H}_0 := \mathbb{H}(\eta_x)$ for any $x \in V$.

Next, let $\Omega^1$ be the set of configurations with energy $\mathbb{H}_0+1$ that are first reachable by a path from $\Omega^0$ with energy barrier $\mathbb{H}_0+2$. A configuration in $\Omega^1$ takes form of a unique $n \times n$ rectangle, but one corner spin $+1$ gets removed and be attached at one of the edges.

Then, for $i \in [1,3]$, let $\Omega^{i+1}$ be the set of configurations with energy $\mathbb{H}_0+1$, first reachable by a path from $\Omega^i$, with energy barrier $\mathbb{H}_0+2$ and not going back to $\Omega^{i-1}$. According to this recursive definition, starting from $\Omega^4$, one cannot reach new configurations; the only option is to go back to $\Omega^3$. We illustrate candidates for each collection $\Omega^i$, $i \in [0,4]$, in Figure \ref{fig4}, and refer to \cite[Sections 4.3--4.6]{BL15b} for the precise definitions and properties.

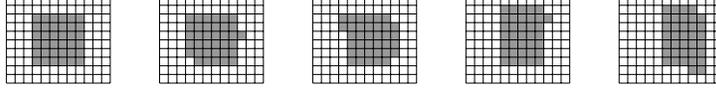
\begin{figure}[t]\centering
\begin{tikzpicture}[scale=0.115]
\fill[white] (0,0) rectangle (12,10);

\fill[black!40!white] (3,2) rectangle (9,8);

\draw[very thin] (0,0) grid (12,10);
\end{tikzpicture}
\hspace{5mm}
\begin{tikzpicture}[scale=0.115]
\fill[white] (0,0) rectangle (12,10);

\fill[black!40!white] (3,2) rectangle (9,8);
\fill[black!40!white] (9,5) rectangle (10,6);
\fill[white] (3,2) rectangle (4,3);

\draw[very thin] (0,0) grid (12,10);
\end{tikzpicture}
\hspace{5mm}
\begin{tikzpicture}[scale=0.115]
\fill[white] (0,0) rectangle (12,10);

\fill[black!40!white] (3,2) rectangle (9,8);
\fill[black!40!white] (9,3) rectangle (10,7);
\fill[white] (3,2) rectangle (4,6);

\draw[very thin] (0,0) grid (12,10);
\end{tikzpicture}
\hspace{5mm}
\begin{tikzpicture}[scale=0.115]
\fill[white] (0,0) rectangle (12,10);

\fill[black!40!white] (4,2) rectangle (9,9);
\fill[black!40!white] (9,7) rectangle (10,8);

\draw[very thin] (0,0) grid (12,10);
\end{tikzpicture}
\hspace{5mm}
\begin{tikzpicture}[scale=0.115]
\fill[white] (0,0) rectangle (12,10);

\fill[black!40!white] (5,2) rectangle (9,9);
\fill[black!40!white] (8,1) rectangle (10,8);

\draw[very thin] (0,0) grid (12,10);
\end{tikzpicture}\caption{\label{fig4}Configurations belonging to $\Omega^{0}$, $\Omega^{1}$, $\Omega^{2}$, $\Omega^{3}$, and $\Omega^{4}$, respectively.}
\end{figure}

The classification in the previous paragraph implies that (cf. \eqref{eq:Phi-bar def}) $\overline\Phi = \mathbb{H}_0 +2$, and that\footnote{Here, we disregarded set-theoretic issues and simply listed the configurations belonging to a stable plateau.}
\[
\mathscr{P}^{\star,1} = \Omega^0 \cup  \Omega^1  \cup  \Omega^2  \cup  \Omega^3  \cup  \Omega^4 .
\]
Every stable plateaux in $\Omega^1\cup\Omega^2\cup\Omega^3\cup\Omega^4$ has the initial depth $1$, thus gets absorbed to some $\eta^x$ for some $x\in V$ on the first time scale $e^\beta$. Thus, the second time scale, $e^{2\beta}$, regarding the deep transitions between the ground states in $\mathcal{S}$, naturally becomes the finally time scale. We summarize the results below.

\begin{theorem}[Structure of the stable plateaux, Case 3]
\begin{enumerate}
\item We have $\mathscr{P}^{\star,1} = \Omega^0 \cup \Omega^1 \cup \Omega^2 \cup \Omega^3 \cup \Omega^4$ and $\Gamma^{\star,1} = 1$.
\item At the second (and last) level, $\mathscr{P}^{\star,2} = \Omega^0 = \{ \eta^x : x\in V\}$ and $\Gamma^{\star,2} = 2$.
\end{enumerate}
\end{theorem}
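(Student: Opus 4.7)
The plan is to combine the general framework of Section \ref{sec2.1} with an explicit geometric analysis of the low-energy configurations near $\mathcal{S}$. As a preliminary, I would record that the minimum communication height between two distinct ground states is $\overline{\Phi}=\mathbb{H}_{0}+2$, a standard isoperimetric fact: the cheapest way to transform $\eta^{x}$ into $\eta^{y}$ is to detach one corner $+1$ particle from the $n\times n$ square (raising the energy by $2$), slide it along an edge at zero cost, reattach it at an adjacent corner (restoring energy $\mathbb{H}_{0}$), and iterate to translate the droplet. Consequently every $\eta\in\overline{\Omega}$ satisfies $\mathbb{H}(\eta)\le\mathbb{H}_{0}+2$, and any stable plateau in $\overline{\Omega}$ has energy either $\mathbb{H}_{0}$ or $\mathbb{H}_{0}+1$.

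For part (1), I would prove both inclusions. The inclusion $\Omega^{0}\cup\cdots\cup\Omega^{4}\subseteq\mathscr{P}^{\star,1}$ is a direct verification: each listed element has energy $\mathbb{H}_{0}$ (if in $\Omega^{0}$) or $\mathbb{H}_{0}+1$ (otherwise), and a case inspection of the admissible Kawasaki exchanges shows that every single-swap neighbor has energy at least $\mathbb{H}_{0}+2$, so each listed configuration is a singleton stable plateau. The reverse inclusion is the central combinatorial step: using $\mathscr{N}=n^{2}$ with $n<L/2$ together with the isoperimetric inequality in the torus $V$, one shows that a configuration of energy at most $\mathbb{H}_{0}+1$ with no strictly lower-energy neighbor must take one of the forms depicted in Figure \ref{fig4}. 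The depth of each plateau is then computed directly: for $\eta^{x}\in\Omega^{0}$ every exit path hits a saddle at level $\mathbb{H}_{0}+2$, giving $\Gamma_{i}^{1}=2$; for $\eta\in\Omega^{1}\cup\cdots\cup\Omega^{4}$ a single neighboring swap reaches an $(\mathbb{H}_{0}+2)$-saddle that descends into a different plateau, giving $\Gamma_{i}^{1}=1$. Taking the minimum yields $\Gamma^{\star,1}=1$.

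For part (2), I would apply Definition \ref{def:gen const} to the pair $(\mathscr{C}^{1},\Gamma^{\star,1})$. The cycles $\{\eta^{x}\}$ associated with $\Omega^{0}$ have depth $2>\Gamma^{\star,1}$ and are therefore absorbing for $\mathfrak{X}^{\star,1}$, while the cycles attached to plateaux in $\Omega^{1}\cup\cdots\cup\Omega^{4}$ have depth exactly $1$ and yield nontrivial transitions. By the recursive definition of $\Omega^{i}$, a one-step jump of $\mathfrak{X}^{\star,1}$ from a plateau in $\Omega^{i}$ lands only in $\Omega^{0}\cup\cdots\cup\Omega^{i+1}$ (with $\Omega^{5}=\emptyset$), so since $\Omega^{0}$ is absorbing, every orbit starting in $\Omega^{1}\cup\cdots\cup\Omega^{4}$ is eventually trapped in $\Omega^{0}$. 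Hence $\Omega^{1}\cup\cdots\cup\Omega^{4}$ is entirely transient, the recurrent classes are the singletons $\{\eta^{x}\}$, and $\mathscr{P}^{\star,2}=\Omega^{0}$. Finally, the depth of $\{\eta^{x}\}$ at level $2$ equals $\overline{\Phi}-\mathbb{H}_{0}=2$, so $\Gamma^{\star,2}=2$; and the torus sliding mechanism renders $\Omega^{0}$ a single irreducible class of $\mathfrak{X}^{\star,2}$, giving $\nu_{2}=1$ and terminal level $\mathfrak{m}=2$.

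The main obstacle will be the reverse inclusion in part (1), namely the combinatorial classification of all stable plateaux in $\overline{\Omega}$. The constraint $\mathbb{H}\le\mathbb{H}_{0}+2$ still admits many candidate droplet shapes, and excluding each non-listed configuration as a local minimum of $\mathbb{H}$ under Kawasaki exchanges requires a careful geometric enumeration driven by isoperimetric estimates, which constitutes the bulk of the technical content of \cite[Sections 4.3--4.6]{BL15b}.
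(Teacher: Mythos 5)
The paper does not prove this theorem: it is a survey statement that defers the hierarchical machinery to Section~\ref{sec2} and the concrete geometric classification to Beltr\'an--Landim~\cite[Sections 4.3--4.6]{BL15b}. Your outline is structurally correct and consistent with both: you identify $\overline\Phi = \mathbb{H}_0 + 2$, reduce the classification of stable plateaux to configurations of energy at most $\mathbb{H}_0+1$, compute the initial depths ($2$ for $\Omega^0$, $1$ for $\Omega^1\cup\cdots\cup\Omega^4$), and then run Definition~\ref{def:gen const} to get the level-$2$ statement.

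Two details, however, are off and worth repairing. First, your droplet-translation heuristic (``detach a corner, slide it at zero cost, reattach at an adjacent corner restoring energy $\mathbb{H}_0$'') does not describe the actual mechanism when $n\ge2$: a single detach-and-reattach move cannot return to a perfect square, so after one such step you are in $\Omega^1$ at energy $\mathbb{H}_0+1$, and the translation of the square droplet is accomplished only by the longer excursion through $\Omega^1,\dots,\Omega^4$ (see Figure~\ref{fig4}). Your conclusion $\overline\Phi = \mathbb{H}_0+2$ is nonetheless correct, since the whole excursion stays at energy $\le\mathbb{H}_0+2$. Second, in part~(2) you invoke the claim that a one-step jump of $\mathfrak{X}^{\star,1}$ from $\Omega^i$ lands only in $\Omega^0\cup\cdots\cup\Omega^{i+1}$; that is not obvious from the recursive definition of the $\Omega^i$ and is not actually needed. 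What you do need, and what the construction of the $\Omega^i$ gives for free by reversibility of $r_\beta$, is that every plateau in $\Omega^1\cup\cdots\cup\Omega^4$ communicates with $\Omega^0$ at barrier $\mathbb{H}_0+2$ (chain back $\Omega^i\to\Omega^{i-1}\to\cdots\to\Omega^0$); once $\Omega^0$ is absorbing for $\mathfrak{X}^{\star,1}$, this already rules out any closed communicating class inside $\Omega^1\cup\cdots\cup\Omega^4$, which is what transience requires. With these two repairs your argument matches what the paper summarizes and what~\cite{BL15b} proves in full.
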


\begin{theorem}[Metastable hierarchy, Case 3]
For each $\mathfrak{h} = 1,2$, there exists a limiting
Markov chain $\{\mathfrak{X}^{\star,\mathfrak{h}}(t)\}_{t\ge0}$ in $\mathscr{P}^{\star,\mathfrak{h}}$
such that the $e^{\beta\Gamma^{\star,\mathfrak{h}}}$-accelerated Glauber dynamics
converges to $\{\mathfrak{X}^{\star,\mathfrak{h}}(t)\}_{t\ge0}$ in the sense
of Theorem \ref{thm:main}.
\end{theorem}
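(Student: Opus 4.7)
The plan is to apply the abstract hierarchical convergence result, Theorem \ref{thm:main}, as a black box, with the structural input supplied by the preceding theorem. Since the dynamics in question is of the Metropolis form \eqref{eq:rbeta def} on the connected state space considered here, the general framework of Section \ref{sec2} applies once the collections $\mathscr{P}^{\star,\mathfrak{h}}$ and the depths $\Gamma^{\star,\mathfrak{h}}$ are identified. These inputs---namely $\mathscr{P}^{\star,1}=\Omega^0\cup\cdots\cup\Omega^4$ with $\Gamma^{\star,1}=1$, and $\mathscr{P}^{\star,2}=\Omega^0$ with $\Gamma^{\star,2}=2$---are the content of the preceding theorem and are taken as given.

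At the first level $\mathfrak{h}=1$, each plateau in $\Omega^i$ ($i\in[1,4]$) lies at the bottom of a cycle $\mathcal{V}_i^1$ of depth exactly $1$, whereas each plateau in $\Omega^0$ lies at the bottom of a cycle of depth at least $2$, which makes the $\Omega^0$-plateaux absorbing for $\mathfrak{X}^{\star,1}$. The rates \eqref{eq:RC def} of $\mathfrak{X}^{\star,1}$ are then read off directly by counting, at each transient plateau, the exit channels realizing the unit-energy increment. Convergence is Theorem \ref{thm:main}(1); the negligibility in (2) is a standard consequence of the fact that by Lemma \ref{lem:cyc bot P} no stable plateau---and hence no non-trivial cycle---lies in $\overline\Omega\setminus\mathcal{V}^{\star,1}$, so the dynamics exits this set in $O(1)$ time, which is negligible on the scale $e^\beta$.

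At the second level $\mathfrak{h}=2$, the transient classes $\Omega^1,\dots,\Omega^4$ are absorbed into the neighborhoods of the ground states, and $\mathfrak{X}^{\star,2}$ lives on $\Omega^0=\{\eta^x:x\in V\}$ with jumps of activation energy $\Gamma^{\star,2}=2$. Its rates are prescribed by \eqref{eq:R-Cstar def} as sums over entry points $\eta\in\partial^\star\mathcal{V}_i^2$ of the hitting probabilities ${\bf P}_\eta^{\mathscr{C}^2}[\mathcal{T}_{\mathcal{F}(\mathcal{V}_{i'}^2)}=\mathcal{T}_{\mathscr{P}^{\star,2}}]$. By the translation symmetry of the model on the torus, $\mathfrak{X}^{\star,2}$ will be translation-invariant on $V$, so it suffices to compute the rate from a single $\eta^x$ to each neighboring $\eta^{x'}$.

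The main obstacle is this second-level combinatorial computation. One must catalogue all saddle configurations at energy $\mathbb{H}_0+2$ that lie on optimal tunneling paths from $\eta^x$ to a neighboring $\eta^{x'}$, and show that the auxiliary chain restricted to these saddles reduces asymptotically to a tractable absorbing chain whose absorption probabilities can be written in closed form. This is exactly the delicate ``protuberance sliding along an edge'' analysis of \cite{BL15b}; once its output is substituted into \eqref{eq:R-Cstar def}, the rates of $\mathfrak{X}^{\star,2}$ become explicit and Theorem \ref{thm:main} closes the proof.
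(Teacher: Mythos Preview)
Your proposal is correct and matches the paper's intended approach: this is a review article that omits proofs and defers to the general Theorem~\ref{thm:main} together with the structural input from \cite{BL15b}, which is exactly the black-box strategy you outline. One small remark: the statement (and hence your write-up) says ``Glauber dynamics,'' but Case~3 is the Kawasaki dynamics; this is a slip in the paper itself, not in your reasoning.
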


\subsection{\label{sec3.4}Case 4: Kawasaki dynamics, $\mathscr{N}>L^2/4$}

Finally, we consider the case where there
exist macroscopic number of particles in the system, i.e., $\mathscr{N} > L^2/4$.
In this case, the fundamental characteristic in Section \ref{sec3.3}, that
the ground states are those with a $+1$ square, breaks down;
here, in the ground state, particles
tend to line up in one direction, forming a one-dimensional strip
droplet. Because of this fact, the geometry of the saddle structure
changes dramatically. In short, there exists a \emph{phase
transition} in the shape of the ground states at a threshold $\mathscr{N}^{*}:=\frac{L^{2}}{4}$.

From now on, we assume that $\mathscr{N}=L\mathscr{N}_{0}$ for an integer $\mathscr{N}_{0}$
such that
\begin{equation}
\frac{L}{4}<\mathscr{N}_{0}<\frac{K}{2},\quad\text{thus}\quad\frac{L^{2}}{4}<\mathscr{N}<\frac{KL}{2}.\label{eq:N0 assump}
\end{equation}
To fix ideas, we assume that $K>L$ and briefly discuss the $K=L$ case in Remark \ref{rem:K=00003DL}.

\begin{figure}[t]\centering
\begin{tikzpicture}[scale=0.115]
\fill[white] (-4,0) rectangle (16,10);

\fill[black!40!white] (0,0) rectangle (12,10);

\draw[very thin] (-4,0) grid (16,10);
\end{tikzpicture}
\hspace{5mm}
\begin{tikzpicture}[scale=0.115]
\fill[white] (-4,0) rectangle (16,10);

\fill[black!40!white] (1,0) rectangle (12,10);
\fill[black!40!white] (0,3) rectangle (1,7);
\fill[black!40!white] (12,2) rectangle (13,8);

\draw[very thin] (-4,0) grid (16,10);
\end{tikzpicture}
\hspace{5mm}
\begin{tikzpicture}[scale=0.115]
\fill[white] (-4,0) rectangle (16,10);

\fill[black!40!white] (1,0) rectangle (12,10);
\fill[black!40!white] (0,0) rectangle (1,4);
\fill[black!40!white] (12,0) rectangle (13,4);
\fill[black!40!white] (12,6) rectangle (13,8);

\draw[very thin] (-4,0) grid (16,10);
\end{tikzpicture}
\hspace{5mm}
\begin{tikzpicture}[scale=0.115]
\fill[white] (-4,0) rectangle (16,10);

\fill[black!40!white] (0,0) rectangle (10,10);
\fill[black!40!white] (10,0) rectangle (11,9);
\fill[black!40!white] (11,0) rectangle (12,7);
\fill[black!40!white] (12,0) rectangle (13,4);

\draw[very thin] (-4,0) grid (16,10);
\end{tikzpicture}\caption{\label{fig5}Configuration $\sigma^{k}$ and examples of other stable plateaux.}
\end{figure}
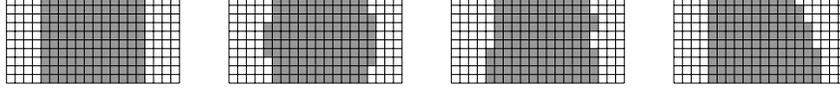

For each $k\in\mathbb{T}_{K}$, denote by $\mathfrak{c}^{k}$ the
$k$-th column in $\Lambda=\mathbb{T}_{K}\times\mathbb{T}_{L}$:
\begin{equation}
\mathfrak{c}^{k}:=\{k\}\times\mathbb{T}_{L}.\label{eq:ck def}
\end{equation}
Denote by $\sigma^{k}\in\Omega$ the configuration such
that (see Figure \ref{fig5})
\begin{equation}
\{x\in V:\sigma^{k}(x)=1\}=\mathfrak{c}^{k}\cup\mathfrak{c}^{k+1}\cup\cdots\cup\mathfrak{c}^{k+\mathscr{N}_{0}-1}.\label{eq:sigmak def}
\end{equation}
Then, the ground states are exactly the collection of $\sigma^k$'s, i.e., $\mathcal{S} = \{ \sigma^k : k \in \mathbb{T}_K \}$. As done in the previous subsection, write $\mathbb{H}_0 := \mathbb{H}(\sigma^k)$ for any $k \in \mathbb{T}_K$. Moreover, the energy barrier between the ground states is classified as
\[
\Phi(\sigma^{k},\sigma^{k'})=\mathbb{H}_{0}+4, \quad \text{thus} \quad \overline\Phi = \mathbb{H}_0 +4.
\]
Accordingly, $\overline\Omega = \{ \eta \in \Omega^{\mathscr{N}} : \Phi (\mathcal{S} ,\eta) \le \mathbb{H}_0+4 \}$.

It turns out that the remaining stable plateaux with higher energy have a highly complicated structure, even compared to all other three cases explained before. We decided to refer to \cite[Section 7]{Kim24} for the necessary but tedious long descriptions of these stable plateaux, and just present some examples in Figure \ref{fig5}. For the record, the main results are as follows.

\begin{theorem}[Characterization of the stable plateaux, Case 4]\label{thm:P1 char}
\begin{itemize}
\item \textbf{Energy $\mathbb{H}_{0}$}
\begin{itemize}
\item $\{\sigma^{k}\}$ for each $k\in\mathbb{T}_{K}$ (cf. \eqref{eq:sigmak def})
\end{itemize}
\item \textbf{Energy $\mathbb{H}_{0}+2$}
\begin{itemize}
\item $\{\sigma_{m;\ell,\ell'}^{k}\}$ for each $m\in[2,L-2]$ and
$\ell,\ell'\in\mathbb{T}_{L}$ (cf. \cite[Definition 7.4]{Kim24})
\item $\mathcal{S}_{1}^{k}$, $\mathcal{S}_{L-1}^{k-1}$ (cf. \cite[Definition 7.4]{Kim24}),
$\mathcal{R}^{k}$ and $\mathcal{L}^{k}$ (cf. \cite[Table 1]{Kim24})
\item $\{\eta\}$ for each $\eta\in\mathcal{R}_{(i)}^{k}\cup\mathcal{L}_{(i)}^{k}$
for $i\in[2,\frac{L}{2}]$ (cf. \cite[Table 3]{Kim24})
\end{itemize}
\item \textbf{Energy $\mathbb{H}_{0}+3$}
\begin{itemize}
\item $\{\eta\}$ for each $\eta\in\mathcal{D}_{m}^{k}$ for $m\in[2,L-2]$
(cf. \cite[Table 2]{Kim24})
\item stable plateaux in $\mathcal{R}_{m,\pm}^{k}\cup\mathcal{L}_{m,\mp}^{k}$
for $m\in[2,L-2]$ (cf. \cite[Table 2]{Kim24})
\item $\{\eta\}$ for each $\widehat{\mathcal{R}}_{(i)}^{k}\cup\widehat{\mathcal{L}}_{(i)}^{k}$
for $i\in[2,\frac{L-2}{2}]$ (cf. \cite[Table 3]{Kim24})
\item stable plateaux in each $\mathcal{R}_{(i),\pm}^{k}\cup\mathcal{L}_{(i),\mp}^{k}\cup\widehat{\mathcal{R}}_{(i),\pm}^{k}\cup\widehat{\mathcal{L}}_{(i),\mp}^{k}$
for $i\in[2,\frac{L+1}{2}]$\\ (cf. \cite[Table 3]{Kim24})
\end{itemize}
\end{itemize}
\end{theorem}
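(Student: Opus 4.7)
The plan is to recast the classification as a geometric problem about the $+1$-cluster $\mathcal{A}(\eta) := \{x \in V : \eta(x) = +1\}$ and its edge perimeter $|\partial_E \mathcal{A}(\eta)|$ on the torus $\Lambda$. Since $|\mathcal{A}(\eta)| = \mathscr{N}$ is conserved, the Hamiltonian \eqref{eq:H def} is, up to an additive constant depending only on $\mathscr{N}$ and $|E|$, an affine function of this perimeter, so that the three energy tiers in the statement correspond to a strictly increasing list of admissible perimeter values, and the stable-plateau condition of Definition \ref{def:stab plat} translates to: $\mathcal{A}(\eta)$ admits no Kawasaki swap that strictly decreases its perimeter, while zero-cost swaps connect $\eta$ to every other configuration of the same plateau.

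I would begin by disposing of the ground states. A torus-adapted isoperimetric inequality of Alonso--Cerf / Bollob\'{a}s--Leader type, together with the regime \eqref{eq:N0 assump} which forces strip shapes to beat square shapes, shows that the minimum perimeter among subsets of $V$ of cardinality $\mathscr{N} = L\mathscr{N}_0$ equals $2L$ and is attained exactly by unions of $\mathscr{N}_0$ consecutive full columns. This gives $\mathcal{S} = \{\sigma^k : k \in \mathbb{T}_K\}$ and hence the first bullet. For each of the remaining families in the statement, I would verify three local properties: (a) all members share the claimed perimeter, a direct count; (b) the family is connected via energy-preserving Kawasaki swaps, which reduces to showing that any unit protrusion or reentrant unit slides along a straight boundary segment at zero cost; and (c) any boundary swap at a non-slide position strictly increases the perimeter, a case-by-case local inspection at protrusions, reentrant corners and attachment sites, using the elementary fact that removing (resp. adding) a site with $k$ in-cluster neighbors changes $|\partial_E \mathcal{A}|$ by $2k-4$ (resp. $4-2k$).

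The main obstacle is completeness: every $\eta \in \overline{\Omega}$ of energy $\mathbb{H}_0 + 2$ or $\mathbb{H}_0 + 3$ satisfying the stable-plateau property must fall into one of the listed families. The strategy is to fix a reference strip $\sigma^k$ minimizing $|\mathcal{A}(\eta) \triangle \mathcal{A}(\sigma^k)|$, decompose $\mathcal{A}(\eta)$ into (i) a central, possibly partially filled, near-strip region, (ii) attached protrusions on its left or right boundary, and (iii) isolated droplets in the complement, and then use the tight perimeter budget above $2L$ together with the conservation $|\mathcal{A}(\eta)| = L\mathscr{N}_0$ to bound each piece separately. Isolated droplets of size $\ge 2$ and any non-elementary indentation each consume too much perimeter, so the remaining possibilities reduce exactly to the partially filled strips $\sigma^k_{m;\ell,\ell'}$, the seed plateaux $\mathcal{S}^k_1$ and $\mathcal{S}^{k-1}_{L-1}$, the rectangular decorations $\mathcal{R}^k$ and $\mathcal{L}^k$, the dented configurations $\mathcal{D}^k_m$, and their staircase variants $\mathcal{R}^k_{(i)}, \widehat{\mathcal{R}}^k_{(i)}, \mathcal{R}^k_{m,\pm}$ defined in \cite[Section 7]{Kim24}. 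The remaining bookkeeping, in particular the chirality labels distinguishing $\mathcal{R}^k_{m,\pm}$ from $\mathcal{L}^k_{m,\mp}$ and the staircase-height indices $i \in [2, \tfrac{L}{2}]$, is the source of the combinatorial complexity; I would organize it by first stratifying candidate shapes according to the number of partially filled columns and then by the admissible positions of their protrusions and dents.
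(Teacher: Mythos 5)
The paper does not reproduce a proof of this theorem: it explicitly defers the ``tedious long descriptions'' of these stable plateaux and their verification to \cite[Section 7, Tables 1--3]{Kim24}. Your overall strategy --- reduce the conserved-magnetization Hamiltonian to the edge perimeter of the $+1$-cluster, characterize $\mathcal{S}=\{\sigma^k\}$ via a torus isoperimetric inequality, then enumerate perimeter local minima under a tight budget --- is the right skeleton for a Kawasaki energy landscape, but there are two concrete gaps.

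First, your perimeter calculus is for Glauber, not Kawasaki. A Kawasaki swap exchanges an occupied $x$ with a vacant $y\sim x$, the edge $\{x,y\}$ remains a boundary edge, and the net change is $2(k_x-k'_y)$ where $k_x$ is the number of occupied neighbors of $x$ other than $y$ and $k'_y$ the number of occupied neighbors of $y$ in $\mathcal{A}\setminus\{x\}$. Since $|\mathcal{A}|=\mathscr{N}$ is conserved and $|\partial_E\mathcal{A}|=4|\mathcal{A}|-2e(\mathcal{A})$, \emph{every} Kawasaki step changes the perimeter by an even integer. If, as you assert, the Hamiltonian had unit slope in the perimeter, then every accessible energy level would be an even offset of $\mathbb{H}_0$, so the tier $\mathbb{H}_0+3$ in the statement could not exist. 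The normalization actually used in \cite[Section 7]{Kim24} must therefore carry a factor $\tfrac12$ relative to \eqref{eq:H def} as reproduced in this review (equivalently $\mathbb{H}=-e(\mathcal{A})$ up to constants), so that $\mathbb{H}_0+2$ is perimeter $P_0+4$ and $\mathbb{H}_0+3$ is $P_0+6$. Once you correct this, a new obligation appears which your outline silently skips: you must explain why the list contains \emph{nothing} at energy $\mathbb{H}_0+1$ (perimeter $P_0+2$), even though configurations at that level --- a full strip with a single protruding particle, say --- certainly exist. Showing that none of them satisfies the stable-plateau condition is a real step of the proof.

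Second, the theorem classifies the stable plateaux \emph{in $\overline\Omega$}, and you only argue the completeness direction (``every $\eta\in\overline\Omega$ at these energies satisfying the plateau property is in the list''). You never establish the converse: that each listed family is actually contained in $\overline\Omega$, i.e.\ reachable from $\mathcal{S}$ by a path of height at most $\overline\Phi=\mathbb{H}_0+4$. This requires explicit path constructions (slide a protrusion around the boundary, detach and reattach a corner unit, grow a staircase) together with a check that the running maximum of the energy along each path stays $\le\mathbb{H}_0+4$. Without this, your argument shows only that the families are local minima of the energy, not that they belong to $\mathscr{P}^1$ as defined via \eqref{eq:Omega-bar def} and \eqref{eq:P1 def}. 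The stratification you propose for the remaining bookkeeping is reasonable, but the families $\mathcal{R}^k_{(i)}$, $\widehat{\mathcal{R}}^k_{(i)}$, $\mathcal{R}^k_{m,\pm}$, etc.\ are defined concretely in the companion paper and must ultimately be matched shape-by-shape against your stratification; that matching is precisely the content deferred to \cite{Kim24}.
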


\begin{theorem}[Hierarchical decomposition of the stable plateaux, Case 4]
\label{thm:hier dec}$ $
\begin{enumerate}
\item We have $\Gamma^{\star,1}=1$, and $\mathscr{P}^{1}=\mathscr{P}^{\star,1}$
is decomposed as (cf. \eqref{eq:P-starh dec}
and \eqref{eq:P-starh C-starh rec})
\begin{align*}
\mathscr{P}_{{\rm rec}}^{\star,1}= & \bigcup_{k\in\mathbb{T}_{K}}\{\{\sigma^{k}\}\}\cup\bigcup_{k\in\mathbb{T}_{K}}\bigcup_{m=2}^{L-2}\{\{\sigma_{m;\ell,\ell'}^{k}\}:\ell,\ell'\in\mathbb{T}_{L}\}\\
 & \cup\bigcup_{k\in\mathbb{T}_{K}}\bigcup_{i\in[2,\frac{L}{2})}\{\{\eta\}:\eta\in\mathcal{R}_{(i)}^{k}\}\cup\bigcup_{k\in\mathbb{T}_{K}}\bigcup_{i\in[2,\frac{L}{2})}\{\{\eta\}:\eta\in\mathcal{L}_{(i)}^{k}\}\\
 & \cup\bigcup_{k\in\mathbb{T}_{K}}\bigcup_{\eta\in\mathcal{R}_{(\frac{L}{2})}^{k}}\{\{\eta\}\}\cup\bigcup_{k\in\mathbb{T}_{K}}\bigcup_{\eta\in\mathcal{L}_{(\frac{L}{2})}^{k}}\{\{\eta\}\}\quad\text{(if}\quad L\quad\text{is even)}.
\end{align*}
In particular, $\nu_{1}>1$.
\item We have $\Gamma^{\star,2}=2$, and $\mathscr{P}^{\star,2}$ is decomposed
as (cf. \eqref{eq:P-starh dec})
\[
\mathscr{P}_{{\rm rec}}^{\star,2}=\bigcup_{k\in\mathbb{T}_{K}}\{\{\sigma^{k}\}\}
\]
and
\begin{align*}
\mathscr{P}_{{\rm tr}}^{\star,2}= & \bigcup_{k\in\mathbb{T}_{K}}\bigcup_{m=2}^{L-2}\{\mathcal{S}_{m}^{k}\}\cup\bigcup_{k\in\mathbb{T}_{K}}\bigcup_{i\in[2,\frac{L}{2})}\{\mathcal{R}_{(i)}^{k}\}\cup\bigcup_{k\in\mathbb{T}_{K}}\bigcup_{i\in[2,\frac{L}{2})}\{\mathcal{L}_{(i)}^{k}\}\\
 & \cup\bigcup_{k\in\mathbb{T}_{K}}\bigcup_{\eta\in\mathcal{R}_{(\frac{L}{2})}^{k}}\{\{\eta\}\}\cup\bigcup_{k\in\mathbb{T}_{K}}\bigcup_{\eta\in\mathcal{L}_{(\frac{L}{2})}^{k}}\{\{\eta\}\}\quad\text{(if}\quad L\quad\text{is even)}.
\end{align*}
In particular, $\nu_{2}=k>1$.
\item We have $\Gamma^{\star,3}=4$, and $\mathscr{P}^{\star,3}$ is decomposed
as
\[
\mathscr{P}^{\star,3}=\mathscr{P}_{{\rm rec}}^{\star,3}=\{\{\sigma^{k}\}:k\in\mathbb{T}_{K}\}.
\]
In particular, $\nu_{3}=1$ thus $\mathfrak{m}=3$.
\end{enumerate}
\end{theorem}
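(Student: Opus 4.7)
The plan is to execute the hierarchical construction of Section~\ref{sec2.1} directly on the classification in Theorem~\ref{thm:P1 char}, level by level, using the fact established earlier that $\Phi(\sigma^k,\sigma^{k'})=\mathbb{H}_0+4$ and $\overline\Omega=\{\eta:\Phi(\mathcal{S},\eta)\le \mathbb{H}_0+4\}$.

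At level $1$, I compute $\Gamma^1_i$ for each stable plateau listed in Theorem~\ref{thm:P1 char}. The ground states $\{\sigma^k\}$ satisfy $\Gamma^1_i\ge 2$ since any exit costs at least $2$ (an isolated protuberance on the side of the strip). Each plateau at energy $\mathbb{H}_0+2$ admits a one-particle detach/attach move to a neighboring plateau through a saddle at height $\mathbb{H}_0+3$, giving depth exactly $1$; similarly, each plateau at energy $\mathbb{H}_0+3$ admits a relaxation to a lower-energy plateau through a saddle of height $\mathbb{H}_0+3$, again giving depth~$1$. Hence $\Gamma^{\star,1}=1$. To obtain the decomposition of $\mathscr{P}^{\star,1}$ into recurrent and transient classes, I analyze the induced chain $\mathfrak{X}^{\star,1}$: the $\mathbb{H}_0+3$-plateaux can only descend (so they are transient), while each $\mathbb{H}_0+2$-plateau either communicates exclusively with plateaux of its own geometric family through $\mathbb{H}_0+3$ saddles (forming recurrent classes such as $\{\{\sigma_{m;\ell,\ell'}^k\}\}$, $\{\{\eta\}:\eta\in\mathcal{R}_{(i)}^k\}$, etc., together with the absorbing singletons $\{\{\sigma^k\}\}$) or drops to a ground state (in which case it belongs to a singleton recurrent class absorbed at $\{\sigma^k\}$). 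This reproduces the list in item~(1).

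At level $2$, I merge the irreducible components from level~$1$ into super-plateaux $\mathcal{P}^2_i$ via \eqref{eq:P-starh def}; the recurrent $\mathbb{H}_0+2$-classes glue together into precisely the sets $\mathcal{S}^k_m$, $\mathcal{R}_{(i)}^k$, $\mathcal{L}_{(i)}^k$ listed in Theorem~\ref{thm:P1 char}. The ground states remain singletons. Since any two distinct super-plateaux at energy $\mathbb{H}_0+2$ must communicate by creating a free particle while already carrying a protuberance, the lifting path has height exactly $\mathbb{H}_0+4$; on the other hand, no super-plateau at energy $\mathbb{H}_0+2$ can reach another with the same geometric type without passing height $\mathbb{H}_0+4$, and every such super-plateau admits such a lifting path (e.g., sliding a protuberance). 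Consequently $\Gamma^2_i=2$ for the $\mathbb{H}_0+2$ super-plateaux and $\Gamma^2_i\ge 4$ for the ground states, yielding $\Gamma^{\star,2}=2$ and making $\{\sigma^k\}$ absorbing at level~$2$. All non-ground super-plateaux get absorbed by some $\{\sigma^k\}$ because a typical depth-$2$ path from an edge configuration (such as $\mathcal{R}_{(L/2)}^k$) relaxes to a strip, hence they are transient; this gives item~(2).

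At level $3$, the only surviving recurrent elements are the singletons $\{\sigma^k\}$. Using $\Phi(\sigma^k,\sigma^{k'})=\mathbb{H}_0+4$, I obtain $\Gamma^3_i=4$ for every $k$, so $\Gamma^{\star,3}=4$. The standard column-sliding mechanism—moving one particle from column $\mathfrak{c}^{k+\mathscr{N}_0-1}$ to the adjacent empty column $\mathfrak{c}^{k+\mathscr{N}_0}$ through a saddle of height $\mathbb{H}_0+4$, then propagating along the strip—exhibits a path of height exactly $\mathbb{H}_0+4$ between any two neighboring ground states in $\mathbb{T}_K$. Since $\mathbb{T}_K$ is connected, all $\{\sigma^k\}$ lie in a single irreducible class of $\mathfrak{X}^{\star,3}$, so $\nu_3=1$ and $\mathfrak{m}=3$.

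The principal obstacle is item~(1): the sheer number of families ($\mathcal{R}_{(i)}^k$, $\mathcal{L}_{(i)}^k$, $\widehat{\mathcal{R}}_{(i)}^k$, $\widehat{\mathcal{L}}_{(i)}^k$, $\mathcal{D}_m^k$, $\mathcal{R}_{m,\pm}^k$, etc.) demands a careful case-by-case verification that the transitions of depth exactly $1$ partition $\mathscr{P}^{\star,1}$ into the exact classes claimed—equivalently, that no spurious depth-$1$ path connects two families that should belong to different irreducible components. This is precisely the combinatorial content of the tables in \cite[Section~7]{Kim24}, and supplies the only non-routine part of the argument; items~(2) and~(3) then follow by the abstract construction of Section~\ref{sec2.1} applied to this classification.
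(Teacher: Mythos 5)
The paper explicitly omits the proof of this theorem, deferring the entire hierarchical decomposition to the full version \cite{Kim24}; so there is no internal proof to compare against, only the abstract framework of Section~\ref{sec2.1} and the plateau classification in Theorem~\ref{thm:P1 char}. Your outline does follow the intended route---apply Definition~\ref{def:gen const} level by level to that classification---and your final paragraph is honest about what the real work is: verifying that the depth-one transitions partition $\mathscr{P}^{\star,1}$ into exactly the irreducible classes listed, which is the case-by-case content of \cite[Section~7]{Kim24}. Since neither the review article nor your proposal carries out that verification, the proposal is a faithful roadmap rather than a proof, but it is the correct roadmap.

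Two specific inaccuracies are worth flagging. First, you write that each plateau at energy $\mathbb{H}_0+3$ ``admits a relaxation to a lower-energy plateau through a saddle of height $\mathbb{H}_0+3$, again giving depth~$1$.'' This is internally inconsistent: by Definition~\ref{def:stab plat}, every configuration on the boundary of a stable plateau has strictly higher energy than the plateau itself, so a plateau at $\mathbb{H}_0+3$ has all its exits at energy $\ge \mathbb{H}_0+4$; a depth-$1$ exit therefore passes a saddle at $\mathbb{H}_0+4$, not $\mathbb{H}_0+3$. Second, your estimate ``$\Gamma^1_i\ge 2$ since any exit costs at least $2$ (an isolated protuberance on the side of the strip)'' is a substantial underestimate and is not justified by the stated mechanism: in a Kawasaki move a single particle cannot be detached from the flat interface of $\sigma^k$ at cost $2$; the relevant barrier from the ground states is $4$ (consistent with $\overline\Phi=\mathbb{H}_0+4$ and with $\Gamma^{\star,3}=4$), which is what makes $\{\sigma^k\}$ absorbing through level~$2$. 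The inequality $\Gamma^1_i\ge 2$ happens to be true as a weak bound, but the justification offered would not survive scrutiny and, more importantly, does not illuminate why the ground states survive as recurrent singletons until level~$3$.
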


\begin{theorem}[Metastable hierarchy, Case 4]
\label{thm:Kawasaki}For each $h\in[1,3]$, there exists a limiting
Markov chain $\{\mathfrak{X}^{\star,h}(t)\}_{t\ge0}$ in $\mathscr{P}^{\star,h}$
such that the $e^{\beta\Gamma^{\star,h}}$-accelerated Kawasaki dynamics
converges to $\{\mathfrak{X}^{\star,h}(t)\}_{t\ge0}$ in the sense
of Theorem \ref{thm:main}.
\end{theorem}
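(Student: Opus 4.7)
The strategy is to reduce Theorem \ref{thm:Kawasaki} to a direct application of the abstract Theorem \ref{thm:main}, using the concrete structural input provided by Theorems \ref{thm:P1 char} and \ref{thm:hier dec}. First, I would verify that the Kawasaki dynamics on $\Omega_{\mathscr{N}}$ defined by \eqref{eq:r-beta-def} fits the Metropolis-type template \eqref{eq:rbeta def}: this is immediate once one endows $\Omega_{\mathscr{N}}$ with the edge structure $\eta\sim\xi$ iff $\xi=\eta^{x\leftrightarrow y}\ne\eta$ for some $\{x,y\}\in E$, since the rate is then exactly $e^{-\beta\max\{\mathbb{H}(\xi)-\mathbb{H}(\eta),0\}}$. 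Connectedness of $\Omega_{\mathscr{N}}$ under this edge structure is standard, as any two configurations with the same particle number are linked by a sequence of nearest-neighbor swaps. Hence the abstract setup of Section \ref{sec2} applies verbatim to $(\Omega_{\mathscr{N}},\mathbb{H})$.

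Given this, the content of Theorem \ref{thm:Kawasaki} reduces, at each level $\mathfrak{h}\in\{1,2,3\}$, to verifying (i) the characterization of the stable plateaux comprising $\mathscr{P}^{\star,\mathfrak{h}}$, (ii) the value of the threshold depth $\Gamma^{\star,\mathfrak{h}}$, and (iii) the decomposition of $\mathscr{P}^{\star,\mathfrak{h}}$ into the recurrent and transient classes of the induced chain $\{\mathfrak{X}^{\star,\mathfrak{h}}(t)\}_{t\ge0}$. Items (i)--(iii) are precisely the content of Theorems \ref{thm:P1 char} and \ref{thm:hier dec}. Assuming them, Lemmas \ref{lem:Vi1 cycle}, \ref{lem:Pih energy}, and \ref{lem:Vih cycle} construct the cycles $\mathcal{V}_i^{\mathfrak{h}}$ and the limiting chain $\mathfrak{X}^{\star,\mathfrak{h}}$ in a purely combinatorial manner, and Theorem \ref{thm:main} delivers convergence of the accelerated trace process on the time scale $e^{\Gamma^{\star,\mathfrak{h}}\beta}$ together with the negligible-time estimate outside $\mathcal{V}^{\star,\mathfrak{h}}$.

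The main obstacle, therefore, is establishing the underlying landscape information of Theorem \ref{thm:P1 char}. One must classify every configuration in $\overline{\Omega}=\{\eta:\Phi(\mathcal{S},\eta)\le\mathbb{H}_0+4\}$ that is a local minimum of $\mathbb{H}$ and identify which such minima form connected plateaux of the types listed. This is considerably more delicate than in Case 3 because the ground states are now one-dimensional strip droplets $\sigma^k$ rather than square droplets, so the admissible low-energy perturbations, namely detaching a single protuberance, bridging two adjacent columns, partially translating a column, or creating the intricate \emph{stair} and \emph{bar} configurations, generate the elaborate families $\mathcal{S}_m^k$, $\mathcal{R}^k$, $\mathcal{L}^k$, $\mathcal{R}_{(i)}^k$, $\mathcal{L}_{(i)}^k$, $\mathcal{D}_m^k$, $\mathcal{R}_{m,\pm}^k$, and so on. The hard work is the exhaustive geometric enumeration of these saddles and the verification that no others exist up to energy $\mathbb{H}_0+4$; this is the main technical content of \cite{Kim24}.

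Once the landscape is pinned down, the hierarchical decomposition Theorem \ref{thm:hier dec} follows by computing the depth $\Gamma^{\mathcal{P}}$ from \eqref{eq:depth-def} for each plateau and grouping them by the successive thresholds $1<2<4$. At level $1$, every shallow plateau of depth exactly $1$ (such as $\mathcal{R}_{(i)}^k$ and $\mathcal{L}_{(i)}^k$ with $i<L/2$) is transient and collapses by a single jump into a deeper one; at level $2$, the remaining depth-$2$ plateaux $\{\sigma_{m;\ell,\ell'}^k\}$ and $\mathcal{S}_m^k$ are transient and get absorbed by the ground states; at the terminal level $3$ with threshold $\Gamma^{\star,3}=4$, only $\{\{\sigma^k\}:k\in\mathbb{T}_K\}$ remains, and Theorem \ref{thm:main} yields the tunneling Markov chain on $\mathbb{T}_K$ describing the migration of the strip droplet. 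The precise jump rates of $\mathfrak{X}^{\star,3}$ follow from \eqref{eq:R-Cstar def} via a capacity or resolvent computation through the intermediate transient cycles, but this is entirely subsumed by the abstract Theorem \ref{thm:main} once Theorem \ref{thm:hier dec} has been established.
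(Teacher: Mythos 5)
Your proposal is correct and follows the same route as the paper: the paper treats Theorem \ref{thm:Kawasaki} as a direct instantiation of the abstract Theorem \ref{thm:main} once the landscape inputs of Theorems \ref{thm:P1 char} and \ref{thm:hier dec} are in hand, and it defers the geometric enumeration of saddles and plateaux in $\overline{\Omega}$ (which you correctly flag as the substantive technical burden) to the full version \cite{Kim24}. Your identification of the three-level collapse (depths $1<2<4$, with the transient shallow plateaux absorbed into the $\sigma^k$ ground states, terminating in a tunneling chain on $\mathbb{T}_K$) matches the paper's account.
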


\begin{remark}[Case of $K=L$]
\label{rem:K=00003DL}Suppose here that $K=L$. Then, each stable
plateau classified in Theorem \ref{thm:P1 char} has a reflected counterpart
(by $\mathbb{T}_{K}\leftrightarrow\mathbb{T}_{L}$, which is possible
since $K=L$) with the same Hamiltonian value. Moreover, the hierarchical
metastable transitions in this reflected world remain the same as
characterized in Theorems \ref{thm:hier dec} and \ref{thm:Kawasaki},
but level $3$ would not be the terminal level since the original
ground states are not connected to the reflected ground states at
this level. Thus, the difference here occurs due to this additional
final transition, at level $\mathfrak{m}=4$, between the collection
of original ground states in $\mathcal{S}$ and the collection of
reflected new ground states. To see this, for simplicity we fix $k\in\mathbb{T}_{k}$,
recall $\sigma^{k}\in\mathcal{S}$, and define a new configuration
$\widehat{\sigma}^{k}$ defined as
\[
\{x\in V:\widehat{\sigma}^{k}(x)=1\}=\mathfrak{r}^{k}\cup\mathfrak{r}^{k+1}\cup\cdots\cup\mathfrak{r}^{k+\mathscr{N}_{0}-1},
\]
where $\mathfrak{r}^{\ell}$ is the $\ell$-th row in $\Lambda$.
Then, it holds that
\[
\Phi(\sigma^{k},\widehat{\sigma}^{k})\ge\mathbb{H}_{0}+5.
\]
It would be quite an interesting but technical question to characterize exactly this energy barrier $\Phi(\sigma^{k},\widehat{\sigma}^{k})$, and also to dive into the exact energy landscape lying between these transitions. We decide not to go any further in this direction and leave it as a future research topic.
\end{remark}

\begin{acknowledgement}
SK would like to thank all the organizers of the event
\emph{Particle Systems and PDE's XII}, especially Federico Sau (University of Trieste),
for the invitation and hospitality during his three weeks stay in the beautiful city of Trieste.
SK was supported by KIAS Individual Grant (HP095101) at the Korea
Institute for Advanced Study.
\end{acknowledgement}

\end{document}